% do not edit this file is generated 
% be sure that pdflatex is used 
%% -*- latex-command: pdflatex -*- 
\documentclass[11pt]{article}
\pdfoutput=1

\usepackage[latin1]{inputenc}
\usepackage[english]{babel}
\usepackage{amsmath,amsfonts,amssymb}
\usepackage{euscript}
\usepackage{stmaryrd}
\usepackage{float}
\usepackage{url}
\usepackage{graphicx}
\usepackage{verbatim}
\usepackage{enumerate}
\usepackage{todonotes}
\usepackage{hyperref}
\usepackage{bbm}
\usepackage{cases}
\usepackage{eurosym}
\usepackage{algorithm}
\usepackage{algorithmic}
\usepackage[algo2e,boxruled]{algorithm2e}
\usepackage{comment}

\graphicspath{{./figures/}}

% My own macros
\usepackage[colors]{optsys}

\newcommand{\ajoute}[1]{\blue{#1}}

\newcommand{\timeset}[2]{\ic{#1,#2}}
\newcommand{\timesetbis}[2]{#1{:}#2}
\newcommand{\shortset}{\timeset{0}{M}}
\newcommand{\longset}{\timeset{0}{D}}
\newcommand{\longfinal}{D+1}
\newcommand{\price}{p}

\newcommand{\pricespace}{\YY}
\newcommand{\discone}{10}
\newcommand{\fhealth}{\mathfrak{N}}

\newcommand{\DP}{Dynamic Programming}
\newcommand{\SP}{Stochastic Programming}
\newcommand{\SDP}{Stochastic Dynamic Programming}
\newcommand{\SDDP}{Stochastic Dual Dynamic Programming}
\newcommand{\SDDIP}{Stochastic Dual Dynamic Integer Programming}

\renewcommand{\trib}{\tribu{F}}
\renewcommand{\filtration}{\trib_{\longset}}

\newcommand{\FractionOfTheCapacity}{\xi} % \overline{S}

\usepackage{a4wide}

\newcommand{\nb}[3]{
  {\colorbox{#2}{\bfseries\sffamily\tiny\textcolor{white}{#1}}}
  {\textcolor{#2}{\text{$\blacktriangleright$}
      {\textcolor{#2}{#3}}\text{$\blacktriangleleft$}}}}

\newcommand{\pierre}[1]{\nb{Pierre}{blue}{#1}}

\title{Decomposition Methods for Dynamically Monotone
       Two-Time-Scale Stochastic Optimization Problems}

\author{Tristan Rigaut\thanks{Schneider Electric, Grenoble, France},
        Pierre Carpentier\thanks{UMA, ENSTA Paris, IP Paris, Palaiseau, France},
        Jean-Philippe Chancelier\thanks{CERMICS, Ecole des Ponts, Marne-la-Vall\'{e}e, France},
        Michel De Lara\footnotemark[3]}

\date{\today}

\begin{document}

\setcounter{tocdepth}{2}

\maketitle

%\tableofcontents

\begin{abstract}
  In energy management, it is common that strategic investment
  decisions (storage capacity, production units) are made at a slow
  time scale, whereas operational decisions (storage, production)
  are made at a fast time scale: for such problems, the total number
  of decision stages may be huge. In this paper, we consider multistage
  stochastic optimization problems with two time-scales, and we propose
  a time block decomposition scheme to address them numerically.
  More precisely, our approach relies on two assumptions. On the one hand,
  we suppose slow time scale stagewise independence of the noise process:
  the random variables that occur during a slow time scale interval
  are independent of those at another slow time scale interval.
  This makes it possible to use \DP\ at the slow time scale.
  On the other hand, we suppose a dynamically monotone property
  for the problem under consideration, which makes it possible
  to obtain bounds.
  Then, we present two algorithmic methods to compute upper and
  lower bounds for slow time scale Bellman value functions. Both
  methods rely respectively on primal and dual decomposition of
  the Bellman equation applied at the slow time scale. We assess
  the methods tractability and validate their efficiency by solving
  a battery management problem where the fast time scale operational
  decisions have an impact on the storage current capacity, hence
  on the strategic decisions to renew the battery at the slow time scale.
\end{abstract}

\section{Introduction, motivation and context}

In this paper we present a two-time-scale stochastic optimal
control formalism to optimize systems with fast controlled
stochastic dynamics (e.g. a change every fifteen or thirty minutes)
that affect a controlled long term stochastic behavior (e.g.
a change every day or every week). Using well-known results
from discrete time stochastic optimal control and convex analysis,
we develop two algorithmic methods to decompose that kind
of problems and to design tractable numerical algorithms,
under a suitable monotonicity property.

The starting point of both methods is to apply Bellman principle
at the slow time scale, introducing Bellman value functions that
are ultimately final costs for a sequence of problems (one problem
for each slow time stage) with fast time scale dynamics.
This requires a Markovian assumption at the slow time scale, but
uncertainties impacting the system at the fast time scale may
remain stagewise dependent.

Computing the slow time scale Bellman value functions is numerically
challenging. However, the two algorithms that we present make
it possible to compute upper and lower bounds when the problem
displays a monotonicity property (which is common in energy storage
management problems). We leverage as well periodicity properties,
that are common when dealing with phenomena correlated to seasons,
like energy consumption and like renewable energy production.

We apply the developed algorithms to a battery management problem
to illustrate how our algorithms prove efficient to compute tight
bounds on the slow time scale Bellman value functions.

\subsection{Motivation: a long term battery management problem}
\label{tts:ssect-motivation}

As a motivation of the research, we present an energy storage
management problem over $20$~years. We manage the charge
and discharge of an energy storage every time step~$m$ of
$30$~minutes. A decision of  battery replacement is taken
every day~$D$, so that the number of days considered in the
problem is~$D= 20 \times 365 = 7300$. Since the number of time
steps during a day is~$M = 24 \times 2 = 48$, the total number
of time steps of the problem is~$48 \times 7300 = 350,\!400$.
The state of charge of the battery has to remain between prescribed
bounds at each time step. We also consider the evolution over time
of the amount of remaining exchangeable energy in the battery
(related to the number of cycles remaining). Once this variable
reaches zero, the battery is considered unusable.
In addition to the battery, the studied system includes a local
renewable energy production unit and a local energy consumption:
the net demand (consumption minus production) at each time step
is an exogenous random variable affecting the system.
Finally we pay for the local system energy consumption,
that is, net demand minus energy exchanged with the battery.
When this quantity is negative (excessive energy production),
the energy surplus is assumed to be wasted. The aim of the problem
is to minimize the energy bill over the whole time
horizon by providing an optimal strategy for the storage charge
and the battery renewal.

In this paper, we propose a framework to formally define stochastic
optimization problems naturally displaying two time-scales,
that is, a slow time scale (here days) and a fast time
scale (here half hours). The motivational problem displays
charge/discharge decisions at the fast time scale, that would
be called operational decisions in~\cite{KautEA12}. It also includes
renewal/maintainance of the battery at the slow time scale as well.
The battery renewal decisions correspond to strategic decisions
in~\cite{KautEA12}. The ultimate goal of the paper is to design
tractable algorithms for such problems with hundreds of thousands
of time steps and possibly two decision time scales, without
a stationary/infinite horizon assumption (contrary
to~\cite{tts:haessig:hal-01147369}) and in a stochastic setting
(which extends~\cite{tts:heymann:hal-01349932}).

\subsection{Literature review}

\SDP\ methods (SDP) based on the Bellman equation \cite{tts:bellman57}
is a standard method to solve a stochastic dynamic optimization
optimization problem by time decomposition. This method suffers
the so called~\emph{curses of dimensionality} as introduced
in~\cite{tts:bellman57,tts:bertsekas1995dynamic,tts:powell2007approximate}.
In particular the complexity of the most classical implementation
of SDP (that discretizes the state space) is exponential
in the number of state variables.

A major contribution to handle a large number of state variables is the
well-known \SDDP\ (SDDP) algorithm~\cite{tts:Pereira:1991:MSO:3113604.3113829}.
This method is adapted to problems with linear dynamics and convex costs.
Other similar methods have been developed such as Mixed Integer Dynamic
Approximation Scheme (MIDAS)~\cite{tts:Philpott2016MIDASA} or \SDDIP\
(SDDiP)~\cite{tts:Zou2018} for nonconvex problems,
in particular those displaying binary variables. The performance
of these algorithms is sensitive to the number of time steps
\cite{tts:leclere2018exact,tts:Philpott2016MIDASA}.

Other classical stochastic optimization methods are even more
sensitive to the number of time stages. It is well-known that
solving a multistage stochastic optimization problem on a scenario
tree displays a complexity exponential in the number of time
steps. %~\cite{tts:shapiro2009lectures}.

Problems displaying a large number of time stages, in particular
problems with multiple time scales, require to design specific methods.
A class of stochastic optimization problems to deal with two time
scales has been introduced in~\cite{KautEA12} and further formalized
in~\cite{Maggioni2019BoundsIM}. It is called Multi-Horizon Stochastic
Optimization Problems and it frames problems where uncertainty
is modeled using multi-horizon scenario trees as rigorously studied
in~\cite{Werner2013}. Several authors have studied stochastic
optimization problems with interdependent strategic/operational
decisions or intrastage/interstage problems
\cite{abgottspon2016multi,phdthesisAbgottspon,abgottsponmedium,skar2016multi,pritchard2005hydroelectric},
but most of the time the developed methods to tackle the difficulties
are problem dependent. In~\cite{KautEA12} the authors present different
particular cases where the two time-scales (called operational and
strategic decision problems) can be easily decomposed.
In~\cite{Maggioni2019BoundsIM} a formal definition of a Multi Horizon
Stochastic Program is given and methods to compute bounds are developed.
The formal Multi Horizon Stochastic Program is a stochastic optimization
problem with linear cost and dynamics where uncertainties are modeled
as multi time scale scenario trees.

\subsection{Paper contributions}

In this paper, we present the setting of a generic two-time-scale
multistage stochastic optimization problem and the application of Bellman
principle at the slow time scale in this framework. The resulting \DP\
equation is referred to as the Bellman equation by time
blocks, and is detailed in~\cite[Sect.~5]{tts:carpentier:JOCA}.
We introduce the class of dynamically monotone problems that
makes possible to relax the problem dynamics without changing its
optimal value (Sect.~\ref{tts:sec:dp}).
We then devise two algorithms to perform a decomposition of the
problem by time block. The first algorithm, akin to the so-called
price decomposition, always gives a lower bound of the optimal value
of the problem, whereas the second algorithm, based on resource
decomposition, gives an upper bound. This upper bound is relevant,
that is, not almost surely equal to~$+\infty$, for dynamically
monotone multistage stochastic optimization problems. In order to practically
compute these two bounds, we assume that the problem presents
periodicity properties at the slow time scale and we show how to
organize the computation to take advantage of it (Sect.~\ref{tts:sec:bounds}).
We present an application of this method to a battery
management problem incorporating a very large number of time steps
(Sect.~\ref{tts:sec:experiments}). We finally give some insights
on the numerical complexity of the decomposition methods
(Appendix~\ref{tts:ann-complexity}).

\section{Time block decomposition of two-time-scale stochastic optimization problems}
\label{tts:sec:dp}

We present a formal definition of a dynamically monotone
two-time-scale stochastic optimization problem, that is,
a slow time scale and a fast time scale. Then, we introduce
Bellman value functions at the slow time scale as a way to
decompose a two-time-scale stochastic optimization problem
in time blocks.

\subsection{Notations for two time-scales}
\label{ssect:tts-notations}

To properly handle two time-scales, we adopt the following notations.
For a given constant time interval~$\Delta t >0$, let~$M \in \NN^*$
be such that~$(M+1)$ is the number of time steps in the slow time step,
e.g. for~$\Delta t = 30$ minutes,~$M+1 = 48$, when the slow time
step correspond to a day. Given two natural numbers~$r \leq s$, we use
either the notation~$\timeset{r}{s}$ or the notation~$\timesetbis{r}{s}$
for the set $\{r,r+1,\dots,s-1,s\}$. A decision-maker has to make decisions
on two time-scales over a given number of slow time steps~$(D+1)\in \NN^*$:
\begin{enumerate}
\item one type of (say, operational) decision every fast time
  step~$m \in \shortset$ of every slow time step~$d \in \longset$,
\item another type of (say, strategic) decision every slow time
  step~$d \in \longset\cup\{\longfinal\}$.
\end{enumerate}
In our model the time flows between two slow time steps $d$ and $d+1$ as follows:
\begin{align*}
  d,0   \quad \xrightarrow[\;\;\; \Delta t \;\;\;]{} \quad
  d,1   \quad \xrightarrow[\;\;\; \Delta t \;\;\;]{} \quad
  \dots \quad \xrightarrow[\;\;\; \Delta t \;\;\;]{} \quad
  d,M   \quad \xrightarrow[\;\;\; \Delta t \;\;\;]{} \quad
  d+1,0
\end{align*}
A variable~$z$ will have two time indexes~$z_{d,m}$ if it changes every fast
time step~$m$ of every slow time step~$d$. An index~$(d,m)$ belongs to the set
\begin{equation}
  \TT = \longset \times \shortset \cup \{ \np{\longfinal,0} \} \eqfinv
  \label{tts:eq:couples_of_indices}
\end{equation}
which is a totally ordered set when equipped with the lexicographical order~$\preceq$:
\begin{equation}
  (d,m) \preceq (d',m') \iff   \np{d \leq d'} \text{ or }
  \bp{ d= d' \text{ and } m \leq m'} \eqfinp
  \label{tts:eq:lexicographically_ordered_time_span}
\end{equation}
We also use the following notations for describing sequences of
variables and sequences of spaces. For $(d,m)$ and $(d,m') \in \TT$, with
$m \le m'$:
\begin{itemize}
\item the notation $z_{d,m:m'}$ refers to the sequence
  of variables $\np{ z_{d,m},z_{d,m+1},\dots, z_{d,m'-1},z_{d,m'}}$,
\item the notation $\ZZ_{d,m:m'}$ refers to the cartesian product
  of spaces~$\prod_{k=m}^{m'} \ZZ_{d,k}$.
\end{itemize}

\subsection{Two-time-scale multistage stochastic optimization setting}

We consider a probability space~$(\omeg,\trib,\prbt)$ and an exogenous
noise process~$\va{w}=\na{\va{w}_{d}}_{d\in\longset}$ at the slow time
scale. Random variables are denoted using bold letters,
and we denote by \( \sigma\np{\va{Z}} \) the $\sigma$-algebra generated by the
random variable~$\va{Z}$.
For any $d\in\longset$, the random variable~$\va{w}_{d}$ consists of
a sequence of random variables~$\na{\va{w}_{d,m}}_{m\in\shortset}$
at the fast time scale:
\begin{equation}
  \label{tts:eq:splatWd}
  \va{w}_d = (\va{w}_{d,0}, \dots, \va{w}_{d,m}, \dots,\va{w}_{d,M}) \eqfinp
\end{equation}
Each random variable~$\va{w}_{d,m}:\omeg \to \WW_{d,m}$ takes values
in a measurable space $\WW_{d,m}$ (``uncertainty'' space) equipped
with a $\sigma$-field $\tribu{\Uncertainty}_{d,m}$, so that
$\va{w}_{d}:\omeg\to\WW_{d}$ takes values in the product space
$\WW_{d} = \WW_{d,0:M} = \prod_{m=0}^{M} \WW_{d,m}$.
\begin{subequations}
  \label{tts:eq:tribus}
  For any $(d,m)\in\TT$, we denote by~$\tribu{F}_{d,m}$ the $\sigma$-field
  generated by all noises up to time~$(d,m)$, that is,
  \begin{equation}
    \tribu{F}_{d,m} =
    \sigma \bp{\va{w}_0,\dots,\va{w}_{d-1},\va{w}_{d,0},\dots,\va{w}_{d,m}} \eqfinp
  \end{equation}
  We also introduce the filtration~$\filtration$ at the slow time scale:
  \begin{equation}
    \filtration = \{\tribu{F}_{d,M}\}_{d\in\longset}
    = \bp{ \tribu{F}_{0,M}, \ldots, \tribu{F}_{D,M} } \eqfinp
  \end{equation}
\end{subequations}

In the same vein, we introduce a decision
process~$\va{u}=\na{\va{u}_{d}}_{d\in\longset}$ at the slow time scale,
where each~$\va{u}_{d}$ consists of a sequence
$\na{\va{u}_{d,m}}_{m\in\shortset}$ of decision variables at the fast time scale:
\begin{equation}
  \label{tts:eq:splatUd}
  \va{u}_d = (\va{u}_{d,0}, \ldots, \va{u}_{d,m}, \ldots,\va{u}_{d,M}) \eqfinp
\end{equation}
Each random variable~$\va{u}_{d,m}:\omeg \to \UU_{d,m}$ takes values in
a measurable space $\UU_{d,m}$ (``control'' space) equipped with a
$\sigma$-field $\tribu{\Control}_{d,m}$, and we denote by~$\UU_{d}$
the cartesian product $\UU_{d,0:M}$. We finally introduce a state
process~$\va{x}=\na{\va{x}_{d}}_{d\in\longset\cup\{\longfinal\}}$ at the
slow time scale, where each random variable~$\va{x}_{d}:\Omega \to \XX_{d}$
takes values in a measurable space~$\XX_{d}$ (``state'' space) equipped with
a $\sigma$-field $\tribu{\State}_{d}$. Note that, unlike processes~$\va{w}$
and~$\va{u}$, \emph{the state process~$\va{x}$ is defined only at the slow time
scale}. Thus, for any $d\in\longset \cup \{\longfinal\}$, the random
variable $\va{x}_{d}$ represents  the system state at time~$(d,0)$.

We also consider measurable spaces~$\pricespace_{d}$ such that,
for each~$d$, $\XX_{d}$ and~$\pricespace_{d}$ are paired spaces
when equipped with a bilinear form~$\nscal{\cdot}{\cdot}$.
In this paper, we assume that each state space~$\XX_{d}$ is
the vector space~$\RR^{n_{d}}$, so that $\pricespace_{d}=\RR^{n_{d}}$,
the bilinear form~$\nscal{\cdot}{\cdot}$ being the standard scalar product.

For each $d\in\longset$, we introduce a measurable instantaneous cost function
$\coutint_{d} : \XX_{d}\times\UU_{d}\times\WW_{d}\rightarrow \ClosedIntervalClosed{0}{+\infty}$
and a measurable dynamics $\dynamics_{d} :
\XX_{d}\times\UU_{d}\times\WW_{d}\rightarrow \XX_{d+1}$.
Note that both the instantaneous cost~$\coutint_{d}$ and the
dynamics~$\dynamics_{d}$ depend on all the fast time scale decision
and noise variables constituting the slow time step~$d$.
We also introduce a measurable final cost function
$\coutfin : \XX_{\longfinal}\rightarrow \ClosedIntervalClosed{0}{+\infty}$.

With all these ingredients, we write a two-time-scale stochastic optimization
problem:\footnote{The notation~$V^{\mathrm{e}}(x)$ for the optimal value
of Problem~\eqref{tts:eq:2tspb} emphasizes the fact that the dynamics
equations~\eqref{tts:eq:2tsdyn} correspond to equality constraints.
We will consider later Problem~\eqref{tts:eq:2tspbmon}, subject
to dynamics \emph{inequality constraints}, and its optimal value
will thus be denoted by~$V^{\mathrm{i}}(x)$.}
\begin{subequations}
  \label{tts:eq:2tspb}
  \begin{align}
    V^{\mathrm{e}}(x) = \inf_{\va{x},\va{u}} \;
    & \EE \bgc{\sum_{d=0}^{D} \coutint_d(\va X_d,\va U_d,\va W_{d}) +
      \coutfin(\va X_{\longfinal})}
      \label{tts:eq:2tscost} \eqfinv \\
    \text{s.t} \quad
    & \va X_{0} = x \eqsepv
      \va X_{d+1} = \dynamics_{d}(\va X_d,\va U_d,\va W_{d}) \eqsepv \forall d \in\longset
      \label{tts:eq:2tsdyn} \eqfinv \\
    & \sigma(\va{u}_{d,m}) \subset \tribu{F}_{d,m}
      \eqsepv \forall (d,m)\in\longset\times\shortset
      \label{tts:eq:2tsnonant} \eqfinp
  \end{align}
\end{subequations}
%\pc{On est ici en HD au moins sur la d\'{e}cision~$\va{U}_{d,M}$.
%C'est bien le cas dans l'application o\`{u} l'on d\'{e}cide d'acheter
%ou pas une batterie en en connaissant le prix.}
The expected cost value in~\eqref{tts:eq:2tspb} is well defined,
as all functions are nonnegative measurable.
Constraint~\eqref{tts:eq:2tsnonant} --- where $\sigma(\va{u}_{d,m})$
is the $\sigma$-field generated by the random variable~$\va{u}_{d,m}$
--- expresses the fact that each decision $\va{u}_{d,m}$ is
$\tribu{F}_{d,m}$-measurable, that is, is nonanticipative.
\begin{remark}
  \label{rm:constraints}
  We just consider as explicit constraints dynamic constraints
  as~\eqref{tts:eq:2tsdyn} and nonanticipativity constraints
  as~\eqref{tts:eq:2tsnonant}, but other constraints involving
  the state and the control can be incorporated in the instantaneous
  cost $\coutint_{d}$ or in the final cost $\coutfin$ by means of
  indicator functions as~$\coutint_{d}$ and $\coutfin$ can take
  values in the extended real numbers space $\RR \cup \na{+\infty}$.
\end{remark}

Problem~\eqref{tts:eq:2tspb} seems very similar to a classical
discrete time multistage stochastic optimization problem.
But an important difference appears in the nonanticipativity
constraint~\eqref{tts:eq:2tsnonant} that expresses the fact that
the decision vector~$\va U_d = (\va U_{d,0},\dots,\va U_{d,M})$
at every slow time step~$d$ does not display the same measurability
for each component (information grows every fast time step).
This point of view is not referred to in the literature and is one
of the novelty of the paper.

\subsection{Dynamically monotone multistage stochastic optimization problems}
\label{ssect:2tspbmon}

Here, we introduce the notion of dynamically monotone multistage
stochastic optimization problem.

For this purpose, we consider the following multistage stochastic
optimization problem:
\begin{subequations}
  \label{tts:eq:2tspbmon}
  \begin{align}
    V^{\mathrm{i}}(x) = \inf_{\va{x},\va{u}} \;
    & \EE \bgc{\sum_{d=0}^{D} \coutint_{d}(\va X_d,\va U_d,\va W_{d}) +
      \coutfin(\va X_{D+1})} \eqfinv \\
    \text{s.t} \quad
    & \va X_{0} = x \eqsepv
      \dynamics_{d}(\va X_d,\va U_d,\va W_{d}) \geq \va X_{d+1} \eqsepv
      \forall d\in\longset \label{tts:eq:2tsdynmon} \eqfinv \\
    & \sigma(\va{u}_{d,m}) \subset \tribu{F}_{d,m}
      \eqsepv \forall (d,m)\in\longset\times\shortset
      \label{tts:eq:2tsnonantmon} \eqfinv \\
    & \sigma(\va{x}_{d+1}) \subset \tribu{F}_{d,M}
      \eqsepv \forall d\in\longset \eqfinp
  \end{align}
\end{subequations}
We have relaxed the dynamic equality constraints~\eqref{tts:eq:2tsdyn}
into inequality constraints~\eqref{tts:eq:2tsdynmon}.
Thus, Problem~\eqref{tts:eq:2tspbmon} is less constrained
than Problem~\eqref{tts:eq:2tspb},
so that the optimal value~$V^{\mathrm{i}}(x)$
of Problem~\eqref{tts:eq:2tspbmon} is less than the optimal
value~$V^{\mathrm{e}}(x)$ of Problem~\eqref{tts:eq:2tspb}:
\begin{equation*}
V^{\mathrm{i}}(x) \leq V^{\mathrm{e}}(x) \eqsepv \forall x \in \XX_{0} \eqfinp
\end{equation*}

\begin{definition}\label{tts:hyp:monotonicity}
  The problem~\eqref{tts:eq:2tspb} is said to be \emph{dynamically monotone}
  if it is equivalent to the relaxed problem~\eqref{tts:eq:2tspbmon}
  in the sense that they have the same optimal values:
  $V^{\mathrm{e}}(x) = V^{\mathrm{i}}(x)$ for any given initial
  state~$x \in \XX_{0}$.  Otherwise stated, in Problem~\eqref{tts:eq:2tspb}, the
  dynamic equality constraints~\eqref{tts:eq:2tsdyn} can be replaced by the
  dynamic inequality constraint~\eqref{tts:eq:2tsdynmon} without changing the
  optimal value.
\end{definition}

In this paper, we are interested in methods to solve dynamically
monotone two-time-scale multistage stochastic optimization problems.

\subsection{Bellman's principle of optimality at the slow time scale}
\label{ssect:bellman-2level}

\SDP, based on Bellman optimality principle, is a classical way to decompose
multistage stochastic optimization problems into multiple but smaller static
optimization problems. In this paragraph, we apply a Bellman optimality
principle by time blocks to decompose the multistage two-time-scale stochastic
optimization problem~\eqref{tts:eq:2tspbmon}, into multiple smaller problems
that are stochastic optimization problems over a single slow time step.

We first introduce a sequence
$\na{V_{d}^{\mathrm{e}}}_{d\in\longset\cup\{\longfinal\}}$
of~\emph{slow time scale Bellman value functions}
associated with Problem~\eqref{tts:eq:2tspb}.
These functions are defined by backward induction as follows.
At time $\longfinal$, we set~$V_{\longfinal}^{\mathrm{e}} = \coutfin$,
and then, for $d\in\longset$ and for all~$x\in\XX_{d}$,
\begin{subequations}
  \label{tts:bellman-2leveleq}
  \begin{align}
    V_{d}^{\mathrm{e}}(x) = \inf_{\va{x}_{d+1},\va{u}_{d}} \;
    & \EE \; \Bc{ \coutint_{d}(x,\va{u}_d,\va{w}_{d}) + V_{d+1}^{\mathrm{e}}(\va{x}_{d+1})}
      \eqfinv \\
    \text{s.t} \quad
    & \va{x}_{d+1} = \dynamics_{d}(x,\va{u}_d,\va{w}_{d})
      \label{tts:bellman-2leveleq-dyn} \eqfinv \\
    & \sigma(\va{u}_{d,m}) \subset \sigma(\va{w}_{d,0:m})
      \eqsepv \forall m \in \shortset
      \label{tts:bellman-2leveleq-mes} \eqfinp
  \end{align}
\end{subequations}
We also introduce a sequence of~\emph{slow time scale Bellman value functions}
$\na{V_{d}^{\mathrm{i}}}_{d\in\longset\cup\{\longfinal\}}$ associated
with Problem~\eqref{tts:eq:2tspbmon}. At time $\longfinal$,
we set~$V_{\longfinal}^{\mathrm{i}} = \coutfin$, and then,
for $d\in\longset$ and for all~$x\in\XX_{d}$,
\begin{subequations}
  \label{tts:bellman-2level}
  \begin{align}
    V_{d}^{\mathrm{i}}(x) = \inf_{\va{x}_{d+1},\va{u}_{d}} \;
    & \EE \; \Bc{ \coutint_{d}(x,\va{u}_d,\va{w}_{d}) + V_{d+1}^{\mathrm{i}}(\va{x}_{d+1})}
      \eqfinv \\
    \text{s.t} \quad
    & \dynamics_{d}(x,\va{u}_d,\va{w}_{d}) \geq \va{x}_{d+1}
      \label{tts:bellman-2level-dyn} \eqfinv \\
    & \sigma(\va{u}_{d,m}) \subset \sigma(\va{w}_{d,0:m})
      \eqsepv \forall m \in \shortset
      \label{tts:bellman-2level-mes} \eqfinv \\
    & \sigma(\va{x}_{d+1}) \subset \sigma(\va{w}_{d,0:M})
      \label{tts:bellman-2level-mesX} \eqfinp
  \end{align}
\end{subequations}
Problem~\eqref{tts:bellman-2level} is less constrained than
Problem~\eqref{tts:bellman-2leveleq} because the (dynamics) equality
constraints~\eqref{tts:bellman-2leveleq-dyn} are more binding
than the inequality constraints~\eqref{tts:bellman-2level-dyn},
and also because~\eqref{tts:bellman-2leveleq-dyn}
implies~\eqref{tts:bellman-2level-mesX}.
Since~$V_{\longfinal}^{\mathrm{e}} = V_{\longfinal}^{\mathrm{i}} = \coutfin$,
we obtain by backward induction that the solution
of Problem~\eqref{tts:bellman-2level} gives a lower bound to
the solution of Problem~\eqref{tts:bellman-2leveleq}:
\begin{equation}
  \label{tts:eq:lessconstrained}
  V_{d}^{\mathrm{i}} \leq V_{d}^{\mathrm{e}}
  \eqsepv
  \forall d\in\longset\cup\{\longfinal\}
  \eqfinp
\end{equation}

We introduce a specific independence assumption for the noise
process~$\va{w}$ in order to apply the Bellman principle at the
slow time scale for the optimization problem~\eqref{tts:eq:2tspb},
that is, a \SDP\ equation at the slow time scale.
\begin{assumption}(White noise assumption)
  \label{tts:hyp:indep}
  The sequence~$\sequence{\va{w}_d}{d\in\longset}$ is white, that is,\\
  $\ba{\bp{\va{w}_{d,0},\dots,\va{w}_{d,m},\dots,\va{w}_{d,M} }}_{d\in\longset}$
  is a sequence of independent random vectors.
\end{assumption}
This assumption is made throughout the rest of the paper.

\begin{remark}
  We do not assume that each random vector
  $\va{w}_d = (\va{w}_{d,0},\dots,\va{w}_{d,M})$ is itself composed
  of independent random variables.
\end{remark}

\begin{proposition}
  \label{prop:dailyBellmanPrinciple}
  Under the white noise Assumption~\ref{tts:hyp:indep}, the value
  function~$V^{\mathrm{e}}$ (resp. $V^{\mathrm{i}}$) solution
  of Problem~\eqref{tts:eq:2tspb} (resp. solution of the relaxed
  problem~\eqref{tts:eq:2tspbmon}) coincides with the Bellman value
  function~$V_0^{\mathrm{e}}$ (resp. $V_0^{\mathrm{i}}$)
  given by Bellman equations~\eqref{tts:bellman-2leveleq}
  (resp.~\eqref{tts:bellman-2level}). More explicitly, we have that
  \begin{equation*}
    V^{\mathrm{e}}(x) = V_{0}^{\mathrm{e}}(x) \quad \text{and} \quad
    V^{\mathrm{i}}(x) = V_{0}^{\mathrm{i}}(x)
    \eqsepv \forall x \in \XX_{0} \eqfinp
  \end{equation*}
\end{proposition}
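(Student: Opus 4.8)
The plan is to establish the identity $V^{\mathrm{e}}(x) = V_{0}^{\mathrm{e}}(x)$ by a standard dynamic programming argument performed at the level of \emph{time blocks} (slow time steps), the only subtlety being that within a block the information pattern is the nested one imposed by~\eqref{tts:eq:2tsnonant}; the case of $V^{\mathrm{i}}$ and $V_{0}^{\mathrm{i}}$ is then identical, replacing the dynamics equality by the inequality~\eqref{tts:eq:2tsdynmon} throughout, so I would treat both at once. First I would reformulate Problem~\eqref{tts:eq:2tspb} as a genuine multistage problem whose stages are indexed by $d\in\longset$, whose stage-$d$ state is~$\va X_{d}$, whose stage-$d$ control is the whole fast-time-scale vector $\va U_{d}=(\va U_{d,0},\dots,\va U_{d,M})$ subject to the nested measurability constraints $\sigma(\va U_{d,m})\subset\tribu F_{d,m}$, and whose stage-$d$ noise is the vector $\va W_{d}$. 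Under the white noise Assumption~\ref{tts:hyp:indep} the process $\na{\va W_{d}}_{d\in\longset}$ is stagewise independent, which is exactly what is needed for a Bellman recursion at this coarse scale.

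The core of the argument is the usual backward induction on $d$, proving that for every $d\in\longset\cup\{\longfinal\}$ the function $V_{d}^{\mathrm{e}}$ defined by~\eqref{tts:bellman-2leveleq} equals the optimal cost-to-go of the block-reformulated problem started from an arbitrary $\tribu F_{d-1,M}$-measurable (for $d\geq 1$; deterministic for $d=0$) state~$\va X_{d}$. The base case $d=\longfinal$ holds by definition since $V_{\longfinal}^{\mathrm{e}}=\coutfin$. For the induction step I would: (i) use the tower property to split the expected cost in~\eqref{tts:eq:2tscost} into the stage-$d$ cost plus a conditional expectation of the tail, conditioning on $\tribu F_{d,M}$; (ii) invoke stagewise independence so that the conditional law of $\va W_{d+1},\dots,\va W_{D}$ given $\tribu F_{d,M}$ is the same as the unconditional law, which lets the tail infimum be carried out \emph{inside} the expectation and identifies it, by the induction hypothesis, with $\EE[V_{d+1}^{\mathrm{e}}(\va X_{d+1})\mid\tribu F_{d,M}]$ evaluated along the chosen strategy; (iii) use an interchange-of-infimum-and-expectation argument (measurable selection / interchange theorem, valid since all costs are nonnegative and measurable, and the value functions are measurable by backward induction) to reduce the stage-$d$ subproblem to the pointwise-in-$\va X_{d}$ problem~\eqref{tts:bellman-2leveleq}, where the nonanticipativity constraint $\sigma(\va U_{d,m})\subset\tribu F_{d,m}$ localizes — again by independence of $\va W_{d}$ from the past — to $\sigma(\va u_{d,m})\subset\sigma(\va w_{d,0:m})$ as in~\eqref{tts:bellman-2leveleq-mes}. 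This closes the induction, and taking $d=0$ gives $V^{\mathrm{e}}(x)=V_{0}^{\mathrm{e}}(x)$. One also remarks that Assumption~\ref{tts:hyp:indep} combined with the dynamics forces $\va X_{d}$ to be $\tribu F_{d-1,M}$-measurable, so the class of admissible states over which the induction hypothesis is stated is the right one; for the relaxed problem this measurability is imposed explicitly as the third constraint in~\eqref{tts:eq:2tspbmon}.

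The main obstacle is step~(iii): justifying rigorously the interchange of the infimum over strategies with the expectation, i.e.\ that $\inf_{\va U}\EE[\cdots]=\EE[\inf_{u}(\cdots)]$ where the inner infimum is over the single-block problem and yields a measurable function of $(\va X_{d},\text{history})$. This requires a measurable-selection theorem and some care about the $\sigma$-fields in which the block control lives; since the state spaces are $\RR^{n_d}$ and all data are measurable with nonnegative costs, the interchange theorem for normal integrands applies, and the nested measurability within the block is handled by treating the block subproblem itself as a small multistage problem. A secondary, purely bookkeeping point is to verify that the value functions $V_{d}^{\mathrm{e}}$ (and $V_{d}^{\mathrm{i}}$) are indeed measurable, which follows by downward induction from the measurable-selection argument. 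I would remark that all of this is precisely the content of the Bellman equation by time blocks developed in~\cite[Sect.~5]{tts:carpentier:JOCA}, to which the proof can be referred for the detailed measurability arguments, the present proposition being its specialization to the two-time-scale setting with the nested intra-block information structure.
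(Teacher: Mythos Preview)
Your treatment of $V^{\mathrm{e}}=V_0^{\mathrm{e}}$ is essentially the paper's: both refer to the time-block Bellman principle of~\cite[Sect.~5]{tts:carpentier:JOCA}, with you sketching the backward induction and interchange argument that this reference supplies.

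The difference lies in the handling of $V^{\mathrm{i}}=V_0^{\mathrm{i}}$. You propose to rerun the same induction ``replacing the dynamics equality by the inequality throughout''. This is workable in principle, but it is not quite identical: once the dynamics become an inequality, $\va X_{d+1}$ is a genuine decision variable rather than a function of $(\va X_d,\va U_d,\va W_d)$, so the interchange and measurable-selection steps have to be carried out with this extra control, and the result of~\cite{tts:carpentier:JOCA} as stated does not apply directly. The paper takes a different route: it introduces nonnegative slack variables $\va\Delta_d$ and rewrites the inequality $\dynamics_d(\va X_d,\va U_d,\va W_d)\geq\va X_{d+1}$ as the equality $\va X_{d+1}=\dynamics_d(\va X_d,\va U_d,\va W_d)-\va\Delta_d$ with $\va\Delta_d\geq 0$. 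This brings the relaxed problem back to the standard equality-dynamics form, so~\cite[Proposition~10]{tts:carpentier:JOCA} applies verbatim, and the resulting Bellman recursion (now with the extra control $\va\Delta_d$) simplifies to~\eqref{tts:bellman-2level} after eliminating the slacks. Your route is more self-contained but requires re-proving the time-block DP principle in the inequality setting; the paper's slack-variable trick is shorter and modular, reducing the inequality case to the already-established equality case at no cost.
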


\begin{proof}
  The fact that the function $V^{\mathrm{e}}$ is equal
  to the function $V_{0}^{\mathrm{e}}$ is a consequence
  of~\cite[Proposition~10]{tts:carpentier:JOCA} where
  the machinery for establishing a \DP\ equation in
  a two-time-scale  multistage stochastic optimization
  setting is developed.
  To establish the equality between the functions~$V^{\mathrm{i}}$
  and $V_{0}^{\mathrm{i}}$, we proceed as follows. First, it is easily
  established that Problem~\eqref{tts:eq:2tspbmon} is equivalent to
  Problem~\eqref{tts:eq:2tspbmonmod} stated below which involves a new
  decision process~$\va{\Delta}=\na{\va{\Delta}_{d}}_{d\in\longset}$,
  each control variable~$\va{\Delta}_{d}$ taking values in~$\XX_{d+1}$:
  \begin{subequations}
    \label{tts:eq:2tspbmonmod}
    \begin{align}
      V^{\mathrm{i}}(x) = \inf_{\va{x},\va{u},\va{\Delta}} \;
      & \EE \bgc{\sum_{d=0}^{D} \coutint_{d}(\va{x}_d,\va{u}_d,\va{w}_{d}) +
        \coutfin(\va{x}_{D+1})} \eqfinv \\
      \text{s.t} \quad
      & \va{x}_{0} = x \eqsepv
        \va{x}_{d+1} = \dynamics_{d}(\va{x}_d,\va{u}_d,\va{w}_{d}) - \va{\Delta}_{d}
        \eqsepv \forall d\in\longset \label{tts:eq:2tsdynmonmod} \eqfinv \\
      & \va{\Delta}_{d} \geq 0 \quad \forall d\in\longset \eqfinv \\
      & \sigma(\va{u}_{d,m}) \subset \tribu{F}_{d,m}
        \eqsepv \forall (d,m)\in\longset\times\shortset
        \label{tts:eq:2tsnonantmonmod} \eqfinv \\
      & \sigma(\va{\Delta}_{d}) \subset \tribu{F}_{d,M}
        \eqsepv \forall d\in\longset \eqfinp
    \end{align}
  \end{subequations}
  %\pc{La nouvelle variable de d\'{e}cision~$\va{\Delta}_{d}$ a la
  %    m\^{e}me mesurabilit\'{e} que la d\'{e}cision~$\va{U}_{d,M}$. On est
  %    donc dans le m\^{e}me cas que pr\'{e}c\'{e}demment.}
  {Second, Problem~\eqref{tts:eq:2tspbmonmod} involves standard
  equality constraints in the dynamics, so that the machinery
  developed in~\cite[Propositions~10]{tts:carpentier:JOCA}
  applies to it.} We therefore obtain a \DP\ equation associated
  with Problem~\eqref{tts:eq:2tspbmonmod} involving the new decision
  process~$\va{\Delta}$. This last \DP\ equation reduces to the
  Bellman equation~\eqref{tts:bellman-2level} by replacing the extra
  nonnegative decision variables by inequality constraints.
\end{proof}

We also introduce a monotonicity assumption for the sequence of Bellman value
functions $\na{V_{d}^{\mathrm{e}}}_{d \in\longset\cup\{\longfinal\}}$ to ensure
that the dynamically monotone property of Definition~\ref{tts:hyp:monotonicity}
is fulfilled.

\begin{assumption}(Nonincreasing Bellman value functions assumption)
  \label{tts:hyp:nonincreasing}
  The Bellman value
  functions~$\na{V_{d}^{\mathrm{e}}}_{d\in \longset\cup\{\longfinal\}}$
  solutions of the Bellman equation~\eqref{tts:bellman-2leveleq} are
  nonincreasing functions of their arguments:
  \begin{equation*}
    \forall d \in \longset\cup\{\longfinal\} \eqsepv
    \forall (x,x') \in \XX_{d}\times\XX_{d} \eqsepv
    x \leq x' \; \Longrightarrow \;
    V_{d}^{\mathrm{e}}(x) \geq V_{d}^{\mathrm{e}}(x') \eqfinp
  \end{equation*}
\end{assumption}
Although this Assumption~\ref{tts:hyp:nonincreasing} has to do with monotonicity
(with respect to the argument of Bellman value functions),
it has nothing to do with the dynamically monotone notion in
Definition~\ref{tts:hyp:monotonicity}.

\begin{proposition}
  \label{prop:dailymonotone}
  We suppose that the Nonincreasing Bellman value functions Assumption~\ref{tts:hyp:nonincreasing} holds true. Then, for
  any~$d\in\longset\cup\{\longfinal\}$, the (original) Bellman value
  function~$V_{d}^{\mathrm{e}}$ in~\eqref{tts:eq:2tspb} coincides with
  the (relaxed) Bellman value function~$V_{d}^{\mathrm{i}}$
  in~\eqref{tts:eq:2tspbmon}:
  \begin{equation*}
    V_{d}^{\mathrm{i}} = V_{d}^{\mathrm{e}} \eqsepv
    \forall d\in\longset\cup\{\longfinal\} \eqfinp
  \end{equation*}
\end{proposition}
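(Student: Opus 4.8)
The plan is to prove the equality $V_{d}^{\mathrm{i}} = V_{d}^{\mathrm{e}}$ by backward induction on $d$, running from $d = D+1$ down to $d = 0$, and to use Assumption~\ref{tts:hyp:nonincreasing} precisely at the induction step to show that relaxing the dynamic equality constraint~\eqref{tts:bellman-2leveleq-dyn} into the inequality constraint~\eqref{tts:bellman-2level-dyn} does not change the value of the one-step Bellman problem.

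\medskip

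First, I would observe that we already have one inequality for free: by~\eqref{tts:eq:lessconstrained}, $V_{d}^{\mathrm{i}} \leq V_{d}^{\mathrm{e}}$ for all $d\in\longset\cup\{\longfinal\}$, since Problem~\eqref{tts:bellman-2level} is a relaxation of Problem~\eqref{tts:bellman-2leveleq}. So the whole content of the proposition is the reverse inequality $V_{d}^{\mathrm{e}} \leq V_{d}^{\mathrm{i}}$. I would set up the backward induction: the base case $d = \longfinal$ is immediate because $V_{\longfinal}^{\mathrm{e}} = V_{\longfinal}^{\mathrm{i}} = \coutfin$. For the induction step, assume $V_{d+1}^{\mathrm{i}} = V_{d+1}^{\mathrm{e}}$; call this common function $V_{d+1}$. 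I then want to show that the two Bellman operators applied to $V_{d+1}$ give the same function, i.e. that the value of~\eqref{tts:bellman-2level} equals the value of~\eqref{tts:bellman-2leveleq} when the final cost $V_{d+1}^{\mathrm{i}} = V_{d+1}^{\mathrm{e}} = V_{d+1}$ is used in both.

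\medskip

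The key step is the following: take any feasible point $(\va{x}_{d+1},\va{u}_d)$ for the relaxed one-step problem~\eqref{tts:bellman-2level}, so that $\dynamics_{d}(x,\va{u}_d,\va{w}_{d}) \geq \va{x}_{d+1}$ pointwise (almost surely), with $\va{x}_{d+1}$ being $\sigma(\va{w}_{d,0:M})$-measurable and each $\va{u}_{d,m}$ being $\sigma(\va{w}_{d,0:m})$-measurable. Define the new random variable $\va{\widetilde x}_{d+1} := \dynamics_{d}(x,\va{u}_d,\va{w}_{d})$; it is $\sigma(\va{w}_{d,0:M})$-measurable by measurability of $\dynamics_{d}$ and of the controls, and the pair $(\va{\widetilde x}_{d+1},\va{u}_d)$ is feasible for the original one-step problem~\eqref{tts:bellman-2leveleq}. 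Since $\va{\widetilde x}_{d+1} \geq \va{x}_{d+1}$ and $V_{d+1}$ is nonincreasing (this is where Assumption~\ref{tts:hyp:nonincreasing} enters, via the induction hypothesis $V_{d+1}^{\mathrm{i}} = V_{d+1}^{\mathrm{e}} = V_{d+1}$ being nonincreasing as a $V^{\mathrm{e}}$-function), we get $V_{d+1}(\va{\widetilde x}_{d+1}) \leq V_{d+1}(\va{x}_{d+1})$ pointwise, hence the objective of~\eqref{tts:bellman-2leveleq} at $(\va{\widetilde x}_{d+1},\va{u}_d)$ is $\leq$ the objective of~\eqref{tts:bellman-2level} at $(\va{x}_{d+1},\va{u}_d)$. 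Taking the infimum over all relaxed-feasible points yields $V_{d}^{\mathrm{e}}(x) \leq V_{d}^{\mathrm{i}}(x)$. Combined with~\eqref{tts:eq:lessconstrained}, this gives $V_{d}^{\mathrm{e}} = V_{d}^{\mathrm{i}}$, and the backward induction carries through to $d=0$.

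\medskip

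The main obstacle, which is really a bookkeeping rather than a conceptual difficulty, is making sure the measurability constraints are respected when one replaces $\va{x}_{d+1}$ by $\dynamics_{d}(x,\va{u}_d,\va{w}_{d})$: one must check that $\dynamics_{d}(x,\va{u}_d,\va{w}_{d})$ is indeed $\sigma(\va{w}_{d,0:M})$-measurable, which follows since the controls $\va{u}_{d,m}$ are $\sigma(\va{w}_{d,0:m}) \subset \sigma(\va{w}_{d,0:M})$-measurable and $\dynamics_{d}$ is measurable. A secondary point is to confirm that the induction hypothesis genuinely transfers the nonincreasing property: we invoke Assumption~\ref{tts:hyp:nonincreasing} for the sequence $\na{V_{d}^{\mathrm{e}}}$, and the equality $V_{d+1}^{\mathrm{i}} = V_{d+1}^{\mathrm{e}}$ proved at the previous step lets us use that monotonicity for $V_{d+1}^{\mathrm{i}}$ as well. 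One should also note nonnegativity of the costs, so that all integrals in~\eqref{tts:bellman-2leveleq} and~\eqref{tts:bellman-2level} are well defined in $\ClosedIntervalClosed{0}{+\infty}$, which makes the pointwise comparison of objectives legitimate after taking expectations.
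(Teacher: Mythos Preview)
Your proof is correct and follows essentially the same backward-induction argument as the paper: the paper takes an $\epsilon$-optimal solution of~\eqref{tts:bellman-2level} and chains the same inequalities (induction hypothesis, then monotonicity of $V_{d+1}^{\mathrm{e}}$ together with~\eqref{tts:bellman-2level-dyn}, then admissibility of $\va{u}_d$ for~\eqref{tts:bellman-2leveleq}), whereas you phrase it as ``for every relaxed-feasible point there is an original-feasible point with no larger cost,'' which is the same idea.
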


\begin{proof}
  By Equation~\eqref{tts:eq:lessconstrained}, we have
  that~$V_{d}^{\mathrm{i}} \leq V_{d}^{\mathrm{e}}$ for
  all~$d\in\longset\cup\{\longfinal\}$. To obtain the reverse
  inequality, we proceed by backward induction.
  First, at time $D+1$, the two functions~$V_{\longfinal}^{\mathrm{e}}$ and $V_{\longfinal}^{\mathrm{i}}$
  are both equal to the function~$\coutfin$.
  Second, let $d$ be fixed in $\longset$ and assume that
  $V_{d+1}^{\mathrm{i}}\ge V_{d+1}^{\mathrm{e}}$.
  For any~$\epsilon > 0$, let~$(\va{x}_{d+1},\va{u}_{d})$ be an
  $\epsilon$-optimal solution of Problem~\eqref{tts:bellman-2level}.
  Then, we have that
  \begin{align*}
    V_{d}^{\mathrm{i}}(x) + \epsilon
    & \geq \;
      \EE \; \bc{ \coutint_{d}(x,\va{u}_d,\va{w}_{d})
      + V_{d+1}^{\mathrm{i}}(\va{x}_{d+1})} \eqfinv \\
    & \geq \;
      \EE \; \bc{ \coutint_{d}(x,\va{u}_d,\va{w}_{d})
      + V_{d+1}^{\mathrm{e}}(\va{x}_{d+1})} \eqfinv
      \tag{by induction assumption $V_{d+1}^{\mathrm{i}}\ge V_{d+1}^{\mathrm{e}}$} \\
    & \geq  \;
      \EE \; \bc{ \coutint_{d}(x,\va{u}_d,\va{w}_{d})
      + V_{d+1}^{\mathrm{e}}\bp{\dynamics_{d}(x,\va{u}_d,\va{w}_{d})}}
      \tag{by monotonicity of~$V_{d+1}^{\mathrm{e}}$ and \eqref{tts:bellman-2level-dyn} }
      \eqfinv \\
    & \geq V_{d}^{\mathrm{e}}(x)
      \tag{by definition~\eqref{tts:bellman-2leveleq} of~$V_{d}^{\mathrm{e}}(x)$,
      since $\va{u}_{d}$ is admissible for Problem~\eqref{tts:bellman-2leveleq}}
      \eqfinp
  \end{align*}
  We thus obtain the reverse inequality
  $V_{d}^{\mathrm{i}}\ge V_{d}^{\mathrm{e}}$ for
  all~$d\in\longset\cup\{\longfinal\}$.
\end{proof}

As an immediate consequence of Propositions~\ref{prop:dailyBellmanPrinciple}
and~\ref{prop:dailymonotone}, we obtain the following proposition %theorem
which is the main result of this section.

\begin{proposition}%{theorem}
\label{pr:main_result_of_Section2}
  Suppose that the white noise Assumption~\ref{tts:hyp:indep} and
the Nonincreasing Bellman value functions
Assumption~\ref{tts:hyp:nonincreasing} hold true.
%  Under Assumptions~\ref{tts:hyp:indep} and~\ref{tts:hyp:nonincreasing},
Then, Problem~\eqref{tts:eq:2tspb} is dynamically monotone and its
  optimal value function~$V^{\mathrm{e}}$ can be computed by solving
  Problem~\eqref{tts:eq:2tspbmon} at the slow time scale by the Bellman
  backward induction~\eqref{tts:bellman-2level}, that is,
  \begin{equation*}
    V^{\mathrm{i}}(x) = V_{0}^{\mathrm{i}}(x) = V_{0}^{\mathrm{e}}(x)
    = V^{\mathrm{e}}(x) \eqsepv \forall x\in\XX_{0} \eqfinp
  \end{equation*}
\end{proposition}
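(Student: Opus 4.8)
The plan is to obtain the statement as an immediate corollary of Propositions~\ref{prop:dailyBellmanPrinciple} and~\ref{prop:dailymonotone}, by chaining the equalities they provide. First I would invoke the white noise Assumption~\ref{tts:hyp:indep}, which is exactly the hypothesis needed in Proposition~\ref{prop:dailyBellmanPrinciple}; this yields, for every $x\in\XX_{0}$, the two identities $V^{\mathrm{e}}(x)=V_{0}^{\mathrm{e}}(x)$ and $V^{\mathrm{i}}(x)=V_{0}^{\mathrm{i}}(x)$, i.e.\ the optimal values of Problem~\eqref{tts:eq:2tspb} and of its relaxation~\eqref{tts:eq:2tspbmon} are recovered by the slow-time-scale Bellman recursions~\eqref{tts:bellman-2leveleq} and~\eqref{tts:bellman-2level} respectively.

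Second, I would use the Nonincreasing Bellman value functions Assumption~\ref{tts:hyp:nonincreasing} to call upon Proposition~\ref{prop:dailymonotone}, which gives $V_{d}^{\mathrm{i}}=V_{d}^{\mathrm{e}}$ for all $d\in\longset\cup\{\longfinal\}$, and in particular $V_{0}^{\mathrm{i}}=V_{0}^{\mathrm{e}}$. Concatenating the three equalities $V^{\mathrm{i}}(x)=V_{0}^{\mathrm{i}}(x)$, $V_{0}^{\mathrm{i}}(x)=V_{0}^{\mathrm{e}}(x)$ and $V_{0}^{\mathrm{e}}(x)=V^{\mathrm{e}}(x)$ produces exactly the chain displayed in the statement, valid for every $x\in\XX_{0}$. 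From this chain one reads off $V^{\mathrm{e}}(x)=V^{\mathrm{i}}(x)$ for all $x\in\XX_{0}$, which is precisely Definition~\ref{tts:hyp:monotonicity} of Problem~\eqref{tts:eq:2tspb} being dynamically monotone; and the identity $V^{\mathrm{e}}(x)=V_{0}^{\mathrm{i}}(x)$ records that the sought optimal value is computed by running the backward induction~\eqref{tts:bellman-2level} at the slow time scale.

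I do not expect any real obstacle here: the substance has been pushed into the two preceding propositions (which in turn rest on the two-time-scale dynamic programming machinery invoked via~\cite[Proposition~10]{tts:carpentier:JOCA} and on the monotonicity argument carried out in the proof of Proposition~\ref{prop:dailymonotone}). The only point requiring a moment's care is the bookkeeping of which assumption feeds which proposition --- the white noise assumption for the Bellman-by-time-blocks reduction, the nonincreasingness assumption for the equality between the equality-constrained and inequality-constrained Bellman value functions --- together with the observation that both are assumed to hold simultaneously, so that the two chains of equalities may indeed be composed.
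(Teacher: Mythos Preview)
Your proposal is correct and matches the paper's own treatment exactly: the paper states this proposition as an immediate consequence of Propositions~\ref{prop:dailyBellmanPrinciple} and~\ref{prop:dailymonotone}, and your argument chains precisely those two results in the same way.
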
%theorem}

The issue is that performing the backward induction~\eqref{tts:bellman-2level}
requires to solve $D$~multi-stage stochastic optimization problems at the fast
time scale. In the next section, we present two methods to compute bounds
of the Bellman value functions at the slow time scale, that allow to leverage
periodicity of the problem and to simplify the backward induction.

\section{Bounds for Bellman value functions of dynamically monotone problems}
\label{tts:sec:bounds}

As seen in~\S\ref{ssect:bellman-2level}, under
Assumptions~\ref{tts:hyp:indep} and~\ref{tts:hyp:nonincreasing},
the optimal value functions of Problem~\eqref{tts:eq:2tspb}
can be computed by solving Problem~\eqref{tts:eq:2tspbmon}
at the slow time scale by the Bellman backward induction~\eqref{tts:bellman-2level}.
We aim at finding tractable algorithms to numerically solve the backward
induction~\eqref{tts:bellman-2level} and obtain the sequence of Bellman
value functions $\{V_{d}^{\mathrm{i}}\}_{d\in\longset\cup\{\longfinal\}}$.
Indeed, these Bellman value functions are not easily obtained. The main
issue is that the optimization problem~\eqref{tts:bellman-2level} is
a multistage stochastic optimization problem that has to be solved
for every~$d \in \longset$ and every~$x \in \XX_d$, and each numerical
solving might be in itself hard.

To tackle this issue, we propose in~\S\ref{Lower_bounds_of_the_value_functions}
and~\S\ref{Upper_bounds_of_the_value_functions} to compute respectively
lower and upper bounds of the Bellman value functions at the slow time
scale. These Bellman value functions bounds can then be used to design
admissible two-time-scale optimization policies. The two algorithms are
based on so-called price and resource decomposition techniques (see
\cite[Chap.~6]{tts:bertsekas1999nonlinear} and \cite{tts:carpentier2017decomp})
applied to Problem~\eqref{tts:bellman-2level}.

Both algorithms involve the computation of auxiliary functions
that gather the fast time scale computations, and that are
numerically appealing because they allow to exploit some eventual
periodicity of two-time-scale problems and parallel computation.
This point is developed in~\S\ref{subsec:periodicity}.

\subsection{Lower bounds of the Bellman value functions}
\label{Lower_bounds_of_the_value_functions}

We present lower bounds for the Bellman value functions $\ba{V_{d}^{\mathrm{i}}}_{d\in\longset\cup\{\longfinal\}}$
given by Equation~\eqref{tts:bellman-2level}. These bounds derive
from an algorithm which appears to be connected to the one
developed in~\cite{tts:heymann:hal-01349932}, called
``adaptive weights algorithm". We extend the results of~\cite{tts:heymann:hal-01349932}
in a stochastic setting and in a more general framework, as we are not tied to a battery
management problem and as we use a more direct way to reach similar
conclusions.

To obtain lower bounds of the
sequence~$\{V_{d}^{\mathrm{i}}\}_{d\in\longset\cup\{\longfinal\}}$ of Bellman
value functions,
we dualize the dynamic equations~\eqref{tts:bellman-2level-dyn}
with Lagrange multipliers, and we use weak duality. The multipliers
(called prices here) could be chosen in the class of nonpositive
$\filtration$-adapted processes but it is enough, to get lower bounds,
to stick to deterministic price processes. Following these lines we obtain
a lower bound as follows.

We define the function
$\coutint_{d}^{\mathrm{P}}:\XX_{d}\times\pricespace_{d+1}\to \RR \cup
\na{\pm\infty}$
(recall that $\pricespace_{d}=\RR^{n_{d}}$), by
\begin{subequations}
  \label{tts:eq:intrapbdual}
  \begin{align}
    \coutint_{d}^{\mathrm{P}}(x_{d},\price_{d+1}) = \inf_{\va{u}_{d}} \;
    & \EE \; \Bc{\coutint_{d}(x_{d},\va{u}_d,\va{w}_{d})
      + \bscal{\price_{d+1}}{\dynamics_{d}(x,\va{u}_d,\va{w}_{d})}} \eqfinv \\
    \text{s.t.} \quad
    & \sigma(\va U_{d,m}) \subset \sigma(\va{w}_{d,0:m})
      \eqsepv \forall m \in \shortset \eqfinv
  \end{align}
\end{subequations}
where~$\coutint_{d}$ and~$\dynamics_{d}$ are respectively the instantaneous
cost function and the dynamics of Problem~\eqref{tts:eq:2tspb}.

Recall that, for a function $g:\XX_{d}\to \RR \cup\na{\pm \infty}$,
  $\LFM{g}: \pricespace_{d+1}\to \RR \cup\na{\pm \infty}$ denotes
  the Fenchel conjugate of~$g$ (see~\cite{tts:rockafellar2015convex}).

\begin{proposition}
  \label{tts:prop:dualintraineq}
Suppose that the white noise Assumption~\ref{tts:hyp:indep} and
the Nonincreasing Bellman value functions
Assumption~\ref{tts:hyp:nonincreasing} hold true.
  Consider the sequence $\ba{\underline{V}_{d}^{\mathrm{P}}}_{d\in\longset\cup\{\longfinal\}}$
of Bellman value functions
  which is defined by $\underline{V}_{\longfinal}^{\mathrm{P}} = \coutfin$
  and for all $d\in\longset$, and for all $x\in\XX_d$
  by\footnote{As we manipulate functions with values in~$\barRR =
      [-\infty,+\infty] $, we need to take care with additions of extended
      reals.
      When not explicitely specified,
    we adopt by default that~$+$ is the Moreau lower
    addition~$\LowPlus$~\cite{Moreau:1970},
    which extends the usual addition to extended reals by
    \( \np{+\infty} + \np{-\infty}=\np{-\infty} + \np{+\infty}=-\infty \).
    Thus Equation~\eqref{tts:eq:bellman-dualized-det} has to be understood as
    \( \underline{V}_{d}^{\mathrm{P}}(x)
    = \sup_{\price_{d+1} \leq 0} \Bp{\coutint_{d}^{\mathrm{P}}(x,\price_{d+1})
      \LowPlus \np{- \LFM{\bp{\underline{V}_{d+1}^{\mathrm{P}}}}(\price_{d+1}) }
      } \).
  }
  \begin{align}
    \label{tts:eq:bellman-dualized-det}
    \underline{V}_{d}^{\mathrm{P}}(x)
    & = \sup_{\price_{d+1} \leq 0} \Bp{\coutint_{d}^{\mathrm{P}}(x,\price_{d+1})
      - \LFM{\bp{\underline{V}_{d+1}^{\mathrm{P}}}}(\price_{d+1}) } \eqfinp
  \end{align}
  Then, the Bellman value functions
  $\ba{\underline{V}_{d}^{\mathrm{P}}}_{\longset\cup\{\longfinal\}}$
  given by Equation~\eqref{tts:eq:bellman-dualized-det} are lower bounds
  of the corresponding Bellman  value functions
  $\ba{V_{d}^{\mathrm{i}}}_{d\in\longset\cup\{\longfinal\}}$ given
  by Equation~\eqref{tts:bellman-2level}, that is,
  \begin{equation}
    \label{tts:inegPi}
    \underline{V}_{d}^{\mathrm{P}} \leq V_{d}^{\mathrm{i}}
    \eqsepv \forall d\in\longset\cup\{\longfinal\} \eqfinp
  \end{equation}
\end{proposition}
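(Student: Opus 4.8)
The plan is to proceed by backward induction on $d$, showing $\underline{V}_{d}^{\mathrm{P}} \leq V_{d}^{\mathrm{i}}$ for all $d \in \longset\cup\{\longfinal\}$. The base case $d = \longfinal$ is immediate since both functions equal $\coutfin$. For the inductive step, I assume $\underline{V}_{d+1}^{\mathrm{P}} \leq V_{d+1}^{\mathrm{i}}$ and aim to propagate the inequality backward through one slow time step. The essential mechanism is weak duality applied to the inner multistage problem~\eqref{tts:bellman-2level}: fix $x \in \XX_d$ and any deterministic price $\price_{d+1} \leq 0$, then dualize the inequality dynamic constraint $\dynamics_{d}(x,\va{u}_d,\va{w}_{d}) \geq \va{x}_{d+1}$ using $\price_{d+1}$. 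Because $\price_{d+1} \leq 0$ and the constraint is an inequality in the right direction, the Lagrangian relaxation gives a lower bound; more precisely, for any feasible $(\va{u}_d, \va{x}_{d+1})$ of~\eqref{tts:bellman-2level} we have $\bscal{\price_{d+1}}{\dynamics_{d}(x,\va{u}_d,\va{w}_{d}) - \va{x}_{d+1}} \leq 0$, so adding this nonpositive term inside the expectation only decreases the objective.

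The key algebraic step is then to split the relaxed objective. The Lagrangian
\[
\EE\Bc{\coutint_{d}(x,\va{u}_d,\va{w}_{d}) + V_{d+1}^{\mathrm{i}}(\va{x}_{d+1}) + \bscal{\price_{d+1}}{\dynamics_{d}(x,\va{u}_d,\va{w}_{d}) - \va{x}_{d+1}}}
\]
separates, because $\price_{d+1}$ is deterministic, into a part involving only $\va{u}_d$ (whose infimum over $\sigma(\va{w}_{d,0:m})$-measurable controls is exactly $\coutint_{d}^{\mathrm{P}}(x,\price_{d+1})$ by definition~\eqref{tts:eq:intrapbdual}) and a part $\EE\bc{V_{d+1}^{\mathrm{i}}(\va{x}_{d+1}) - \bscal{\price_{d+1}}{\va{x}_{d+1}}}$ involving only $\va{x}_{d+1}$. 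Taking the infimum of the latter over $\sigma(\va{w}_{d,0:M})$-measurable $\va{x}_{d+1}$ yields $\inf_{y} \bp{V_{d+1}^{\mathrm{i}}(y) - \bscal{\price_{d+1}}{y}} = -\LFM{\bp{V_{d+1}^{\mathrm{i}}}}(\price_{d+1})$ by definition of the Fenchel conjugate (the measurability constraint is harmless since a deterministic minimizer can always be chosen). Hence, for every $\price_{d+1} \leq 0$,
\[
V_{d}^{\mathrm{i}}(x) \geq \coutint_{d}^{\mathrm{P}}(x,\price_{d+1}) - \LFM{\bp{V_{d+1}^{\mathrm{i}}}}(\price_{d+1}).
\]
Since by the induction hypothesis $\underline{V}_{d+1}^{\mathrm{P}} \leq V_{d+1}^{\mathrm{i}}$, the Fenchel conjugate reverses the inequality, $\LFM{\bp{V_{d+1}^{\mathrm{i}}}} \leq \LFM{\bp{\underline{V}_{d+1}^{\mathrm{P}}}}$, so replacing $V_{d+1}^{\mathrm{i}}$ by $\underline{V}_{d+1}^{\mathrm{P}}$ on the right-hand side preserves the lower bound. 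Taking the supremum over $\price_{d+1} \leq 0$ gives precisely $V_{d}^{\mathrm{i}}(x) \geq \underline{V}_{d}^{\mathrm{P}}(x)$, closing the induction.

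The main obstacle I anticipate is handling the extended-real arithmetic carefully: the conjugate $\LFM{\bp{V_{d+1}^{\mathrm{i}}}}(\price_{d+1})$ could be $+\infty$, and the Lagrangian decomposition must be justified with the Moreau lower addition convention announced in the footnote, so that the split $\inf(A+B) \geq \inf A \LowPlus \inf B$ holds without ambiguity (which it does, since lower addition makes $\inf$ superadditive in the right sense). A secondary technical point is verifying that restricting the prices to deterministic (rather than $\filtration$-adapted) processes still yields a valid — though possibly weaker — lower bound; this follows simply because deterministic processes are a subclass of admissible multipliers, so weak duality over the smaller class still gives a lower bound. I also need to check that the interchange of $\EE$ and $\inf$ over $\va{x}_{d+1}$ is legitimate, which holds because $\va{x}_{d+1}$ is chosen freely subject only to a measurability constraint and the integrand is nonnegative (so a measurable selection theorem or a direct pointwise argument applies). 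Note that Assumption~\ref{tts:hyp:nonincreasing} is not actually needed for this lower-bound direction — it is the monotonicity of $V_{d+1}^{\mathrm{i}}$ used only in the upper-bound counterpart — but it is stated in the hypotheses to keep the standing framework uniform.
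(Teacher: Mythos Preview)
Your proposal is correct and follows essentially the same approach as the paper: backward induction, Lagrangian relaxation of the inequality dynamics with a deterministic nonpositive multiplier (weak duality), additive separation of the relaxed problem into the $\va{u}_d$-part (yielding $\coutint_d^{\mathrm{P}}$) and the $\va{x}_{d+1}$-part (yielding the Fenchel conjugate via the interchange $\inf_{\va{x}_{d+1}}\EE[\varphi(\va{x}_{d+1})]=\inf_{y}\varphi(y)$), and then the supremum over prices. The only cosmetic differences are that the paper writes the chain starting from $\underline{V}_d^{\mathrm{P}}(x)$ and climbing up to $V_d^{\mathrm{i}}(x)$, applies the induction hypothesis \emph{before} forming the conjugate (rather than invoking antitonicity of $\LFM{(\cdot)}$ afterward as you do), and isolates the interchange identity as an explicit preliminary lemma; your observation that Assumption~\ref{tts:hyp:nonincreasing} is not actually used in this direction is also correct.
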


\begin{proof}
  We start the proof by a preliminary interchange result. We consider
  a subset $\espacef{X}$ of the space of random variables taking values
  in $\XX$ and we assume that $\espacef{X}$ contains all the constant
  random variables. Then, we prove that
  \begin{equation}
    \label{tts:interchange}
    \inf_{\va{x}\in \espacef{X}} \EE \bc{\varphi(\va{x})} =
    \inf_{x\in \XX} \varphi(x) \eqfinp
  \end{equation}
  \begin{itemize}
  \item
    The $\le$ inequality \( \inf_{\va{x}\in \espacef{X}} \EE \bc{\varphi(\va{x})} \leq
    \inf_{x\in \XX} \varphi(x) \)
    is clear as $\espacef{X}$ contains all
    the constant random variables.
  \item The reverse inequality holds true if
    $\inf_{x\in \XX} \varphi(x) = -\infty$ since
    $\inf_{\va{x}\in X} \EE \bc{\varphi(\va{x})} \leq \inf_{x\in \XX} \varphi(x)$.
    Assume now that $\inf_{x\in \XX} \varphi(x) = \underline{\varphi} > -\infty$. Then
    $\varphi(\va{x}) \geq \underline{\varphi} \;$ \Pps for all $\va{x}\in\espacef{X}$
    and hence $\inf_{\va{x}\in\espacef{X}} \EE \bc{\varphi(\va{x})} \geq \underline{\varphi}$.
    Consider an arbitrary $\epsilon >0$ and $\va{x}_{\epsilon}$ such that
    $\EE \bc{\varphi(\va{x}_{\epsilon})} \le
    \inf_{\va{x}\in X} \EE \bc{\varphi(\va{x})} + \epsilon$.
    We successively obtain
    $\inf_{x\in \XX} \varphi(x) = \EE\bc{ \inf_{x\in \XX} \varphi(x)}
    \le \EE\bc{\varphi(\va{x}_{\epsilon})}
    \le \inf_{\va{x}\in X} \EE \bc{\varphi(\va{x})} + \epsilon$.
    Thus, the reverse inequality \( \inf_{\va{x}\in \espacef{X}} \EE \bc{\varphi(\va{x})} \geq
    \inf_{x\in \XX} \varphi(x) \) follows, hence the equality in \eqref{tts:interchange}.
  \end{itemize}

  We turn now to the proof of~\eqref{tts:inegPi}, that we do
  by backward induction. First, we have that
  $\underline{V}_{D+1}^{\mathrm{P}} = \coutfin = V_{D+1}^{\mathrm{i}}$.
  Second, consider $d\in \longset$ and assume that
  $\underline{V}_{d+1}^{\mathrm{P}} \leq V_{d+1}^{\mathrm{i}}$.
  We successively have \footnote{Here below, we sometimes
  explicitely use~$\LowPlus$ to stress that there might be an addition
  of two conflicting~$\pm\infty$. When we leave the notation~$+$,
  it is because either we sum real numbers or we sum a real number
  with~$\pm\infty$ or we sum elements of $\ClosedIntervalClosed{0}{+\infty}$.}
  \begin{align*}
    \underline{V}_{d}^{\mathrm{P}}(x)
    & = \sup_{\price_{d+1} \leq 0} \Bp{\coutint_{d}^{\mathrm{P}}(x,\price_{d+1})
      \LowPlus \np{- \LFM{\bp{\underline{V}_{d+1}^{\mathrm{P}}}}(\price_{d+1}) }
      }
\intertext{by~\eqref{tts:eq:bellman-dualized-det} where we explicitely use the
      Moreau lower addition~$\LowPlus$}
%     \\
    & = \sup_{\price_{d+1} \leq 0} \Bp{ \coutint_{d}^{\mathrm{P}}(x,\price_{d+1})
      \LowPlus %+
      {\inf_{{x}_{d+1}} \bp{ - \underbrace{ \bscal{\price_{d+1}}{{x}_{d+1}}
      }_{\in\RR, \textrm{hence the following~}+}
      + \underline{V}_{d+1}^{\mathrm{P}}({x}_{d+1})}}}
\tag{by definition of the Fenchel conjugate}
    \\
    & \le
      \sup_{\price_{d+1} \leq 0} \Bp{ \coutint_{d}^{\mathrm{P}}(x,\price_{d+1})
      \LowPlus %+
      {\inf_{{x}_{d+1}} \bp{ - \bscal{\price_{d+1}}{{x}_{d+1}}
      + \underline{V}_{d+1}^{\mathrm{i}}({x}_{d+1})}}}
      \tag{by the induction assumption} \\
    & =
      \sup_{\price_{d+1} \leq 0} \Bp{ \coutint_{d}^{\mathrm{P}}(x,\price_{d+1})
      \LowPlus %+
      \inf_{\va{x}_{d+1}} \EE \bc{ - \bscal{\price_{d+1}}{\va{x}_{d+1}}
      + \underline{V}_{d+1}^{\mathrm{i}}(\va{x}_{d+1})} }
      \tag{by the interchange result} \\
    & \leq
      \sup_{\price_{d+1} \leq 0} \;
      \inf_{\va{u}_{d}}
      \EE\Bc{\coutint_{d}(x_{d},\va{u}_d,\va{w}_{d})
      +\bscal{\price_{d+1}}{\dynamics_{d}(x,\va{u}_d,\va{w}_{d})} }
      \LowPlus %+
      \inf_{\va{x}_{d+1}} \EE \bc{ - \bscal{\price_{d+1}}{\va{x}_{d+1}}
      + \underline{V}_{d+1}^{\mathrm{i}}(\va{x}_{d+1})}
       \intertext{by substituting~\eqref{tts:eq:intrapbdual}, and by using
      subadditivity of the infimum operation with respect to the Moreau lower addition~$\LowPlus$}
     & =
      \sup_{\price_{d+1} \leq 0} \;
      \inf_{\va{u}_{d},\va{x}_{d+1}}
      \EE\Bc{\coutint_{d}(x_{d},\va{u}_d,\va{w}_{d})
      +\bscal{\price_{d+1}}{\dynamics_{d}(x,\va{u}_d,\va{w}_{d}) -\va{x}_{d+1} }
       + \underline{V}_{d+1}^{\mathrm{i}}(\va{x}_{d+1})}
\tag{as all quantities inside the expectations are in $\OpenIntervalClosed{-\infty}{+\infty}$}
    \\
    & \le
      \inf_{\va{u}_{d},\va{x}_{d+1}} \;
      \sup_{\price_{d+1} \leq 0}
       \EE\Bc{\coutint_{d}(x_{d},\va{u}_d,\va{w}_{d})
      +\bscal{\price_{d+1}}{\dynamics_{d}(x,\va{u}_d,\va{w}_{d}) -\va{x}_{d+1} }
      + \underline{V}_{d+1}^{\mathrm{i}}(\va{x}_{d+1})}
   \\
    & \le
      \inf_{\va{x}_{d+1},\va{u}_{d}} \EE\bc{ \coutint_{d}(x,\va{u}_d,\va{w}_{d})
      + V_{d+1}^{\mathrm{i}}(\va{x}_{d+1})}
      \quad  \text{s.t} \quad
      \dynamics_{d}(x,\va{u}_d,\va{w}_{d}) \geq \va{x}_{d+1}
      \tag{by weak duality} \\
    & = \underline{V}_{d}^{\mathrm{i}}(x)\eqfinp
  \end{align*}
  This ends the proof.
\end{proof}

\subsection{Upper bounds of the Bellman value functions}
\label{Upper_bounds_of_the_value_functions}

We present upper bounds for the Bellman value functions $\ba{V_{d}^{\mathrm{i}}}_{d\in\longset\cup\{\longfinal\}}$
given by Equation~\eqref{tts:bellman-2level}.
They are obtained using a kind of resource decomposition
scheme associated with the dynamic equations, that is, by requiring
that the state at time~$d+1$ be set at a prescribed deterministic
value, so that new constraints have to be added. This is made possible
by the fact that we relax the almost sure target equality
constraint~\eqref{tts:eq:2tsdyn} into the inequality
constraint~\eqref{tts:eq:2tsdynmon}, that is, we enlarge the
admissibility set of the problem, without changing the optimal
value of the problem thanks to the dynamically monotone property
(see Proposition~\ref{pr:main_result_of_Section2}).

We define the function
$\coutint_{d}^{\mathrm{R}}:\XX_{d}\times\XX_{d+1}\to \ClosedIntervalClosed{0}{+\infty}$ by
\begin{subequations}
  \label{tts:eq:intrapbrelaxed}
  \begin{align}
    \coutint_{d}^{\mathrm{R}}(x_{d},r_{d+1}) = \inf_{\va{u}_{d}} \;
    & \EE \; \Bc{\coutint_{d}(x_{d},\va{u}_d,\va{w}_{d})} \eqfinv \\
    \text{s.t.} \quad
    & \dynamics_{d}(x_{d},\va{u}_d,\va{w}_{d}) \ge r_{d+1}
      \label{tts:eq:ineqtarget} \eqfinv \\
    & \sigma(\va U_{d,m}) \subset \sigma(\va{w}_{d,0:m})
      \eqsepv \forall m \in \shortset \eqfinv
  \end{align}
\end{subequations}
where~$\coutint_{d}$ and~$\dynamics_{d}$ are respectively the instantaneous
cost function and the dynamics of Problem~\eqref{tts:eq:2tspb}. Note that the
function $\coutint_{d}^{\mathrm{R}}$ can take the value $+\infty$ since the
constraint~\eqref{tts:eq:ineqtarget} may lead to an empty admissibility set.
Having replaced the equality constraint~\eqref{tts:eq:2tsdyn} by the
  inequality constraint~\eqref{tts:eq:2tsdynmon} in
  Problem~\eqref{tts:eq:2tspbmon} makes possible to have the inequality
  constraint~\eqref{tts:eq:ineqtarget} in the definition of the
  function~$\coutint_{d}^{\mathrm{R}}$.  This last inequality ensures that a
  random variable is almost surely greater or equal to a deterministic quantity,
  a much more easier situation that ensuring the equality between a random
  variable and a deterministic quantity.

\begin{proposition}
  \label{tts:prop:relaxedintraineq}
  Suppose that the white noise Assumption~\ref{tts:hyp:indep}
  and the Nonincreasing Bellman value functions
  Assumption~\ref{tts:hyp:nonincreasing} hold true. Consider the
  sequence~$\ba{\overline{V}_{d}^{\mathrm{R}}}_{d\in\longset\cup\{\longfinal\}}$
   of Bellman value functions
  defined inductively by $\overline{V}_{\longfinal}^{\mathrm{R}} = \coutfin$
  and for all $d\in\longset$ and for all $x\in\XX_d$ by
  \begin{equation}
    \label{tts:eq:bellman-relaxed-det}
    \overline{V}_{d}^{\mathrm{R}}(x) =
    \inf_{r_{d+1} \in\XX_{d+1}} \bp{ \coutint_{d}^{\mathrm{R}}(x,r_{d+1})
      + \overline{V}_{d+1}^{\mathrm{R}}(r_{d+1})} \eqfinp
  \end{equation}
  Then, the Bellman value
  functions~$\ba{\overline{V}_{d}^{\mathrm{R}}}_{\longset\cup\{\longfinal\}}$
  given by Equation~\eqref{tts:eq:bellman-relaxed-det}
  are upper bounds of the Bellman value functions
  $\ba{V_{d}^{\mathrm{i}}}_{d\in\longset\cup\{\longfinal\}}$
  given by Equation~\eqref{tts:bellman-2level}, that is,
  \begin{equation}
    V_{d}^{\mathrm{i}} \le \overline{V}_{d}^{\mathrm{R}}
    \eqsepv \forall d\in\longset\cup\{\longfinal\} \eqfinp
  \end{equation}
\end{proposition}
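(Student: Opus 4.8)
The plan is to proceed by backward induction on $d$, exactly mirroring the structure of the proof of Proposition~\ref{tts:prop:dualintraineq} but with the roles reversed: here we are adding constraints (fixing the next state to a deterministic value), which can only increase the infimum, so the induction will yield an upper bound. At time $\longfinal$, both $\overline{V}_{\longfinal}^{\mathrm{R}}$ and $V_{\longfinal}^{\mathrm{i}}$ equal $\coutfin$, so the base case is immediate. For the inductive step, fix $d\in\longset$ and assume $V_{d+1}^{\mathrm{i}} \le \overline{V}_{d+1}^{\mathrm{R}}$; we must show $V_{d}^{\mathrm{i}}(x) \le \overline{V}_{d}^{\mathrm{R}}(x)$ for all $x\in\XX_d$.

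First I would unfold the definition~\eqref{tts:eq:bellman-relaxed-det} of $\overline{V}_{d}^{\mathrm{R}}(x)$ and substitute the definition~\eqref{tts:eq:intrapbrelaxed} of $\coutint_{d}^{\mathrm{R}}$, obtaining
\[
  \overline{V}_{d}^{\mathrm{R}}(x) = \inf_{r_{d+1}\in\XX_{d+1}} \Bp{ \inf_{\va{u}_{d}} \bset{ \EE\bc{\coutint_{d}(x,\va{u}_d,\va{w}_{d})} }{ \dynamics_{d}(x,\va{u}_d,\va{w}_{d}) \ge r_{d+1},\ \sigma(\va{u}_{d,m})\subset\sigma(\va{w}_{d,0:m}) } + \overline{V}_{d+1}^{\mathrm{R}}(r_{d+1}) } \eqfinp
\]
Using the induction hypothesis $\overline{V}_{d+1}^{\mathrm{R}}(r_{d+1}) \ge V_{d+1}^{\mathrm{i}}(r_{d+1})$ and then, crucially, the \emph{nonincreasingness} of $V_{d+1}^{\mathrm{i}}$ (which equals $V_{d+1}^{\mathrm{e}}$ by Proposition~\ref{prop:dailymonotone}, hence is nonincreasing by Assumption~\ref{tts:hyp:nonincreasing}), the constraint $\dynamics_{d}(x,\va{u}_d,\va{w}_{d}) \ge r_{d+1}$ gives $V_{d+1}^{\mathrm{i}}(\dynamics_{d}(x,\va{u}_d,\va{w}_{d})) \le V_{d+1}^{\mathrm{i}}(r_{d+1}) \le \overline{V}_{d+1}^{\mathrm{R}}(r_{d+1})$ almost surely (here the deterministic $r_{d+1}$ compares pointwise with the random variable). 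Thus for each feasible pair $(r_{d+1},\va{u}_d)$,
\[
  \EE\bc{\coutint_{d}(x,\va{u}_d,\va{w}_{d})} + \overline{V}_{d+1}^{\mathrm{R}}(r_{d+1}) \ge \EE\bc{\coutint_{d}(x,\va{u}_d,\va{w}_{d}) + V_{d+1}^{\mathrm{i}}\bp{\dynamics_{d}(x,\va{u}_d,\va{w}_{d})}} \ge V_{d}^{\mathrm{i}}(x) \eqfinv
\]
where the last inequality holds because such a $\va{u}_d$ is admissible for Problem~\eqref{tts:bellman-2level} with the choice $\va{x}_{d+1} = \dynamics_{d}(x,\va{u}_d,\va{w}_{d})$ (which trivially satisfies $\dynamics_{d}(x,\va{u}_d,\va{w}_{d}) \ge \va{x}_{d+1}$ as equality, and is $\sigma(\va{w}_{d,0:M})$-measurable). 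Taking the infimum over all feasible $(r_{d+1},\va{u}_d)$ on the left gives $\overline{V}_{d}^{\mathrm{R}}(x) \ge V_{d}^{\mathrm{i}}(x)$, completing the induction. One must also handle the case where no feasible $\va{u}_d$ exists for a given $r_{d+1}$, in which case $\coutint_{d}^{\mathrm{R}}(x,r_{d+1}) = +\infty$ and the corresponding term in the outer infimum is $+\infty$, so it does not affect the bound.

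The main subtlety — not a deep obstacle but the point requiring care — is the correct invocation of the nonincreasingness assumption: it applies to the functions $V_{d+1}^{\mathrm{e}}$ by hypothesis, and one transfers it to $V_{d+1}^{\mathrm{i}}$ via Proposition~\ref{prop:dailymonotone} (equivalently, one notes that Assumptions~\ref{tts:hyp:indep} and~\ref{tts:hyp:nonincreasing} together give $V_{d+1}^{\mathrm{i}} = V_{d+1}^{\mathrm{e}}$ for all $d$). A secondary point is bookkeeping with extended-real arithmetic: since all costs lie in $\ClosedIntervalClosed{0}{+\infty}$ there are no conflicting $+\infty/-\infty$ additions, so ordinary addition suffices throughout and the inequalities chain without the $\LowPlus$ precautions needed in the dual proof. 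The measurability constraints~\eqref{tts:bellman-2level-mes} and~\eqref{tts:bellman-2level-mesX} are automatically met by the construction $\va{x}_{d+1} = \dynamics_{d}(x,\va{u}_d,\va{w}_{d})$, since the fast-scale controls $\va{u}_d$ carry exactly the measurability required by the definition~\eqref{tts:eq:intrapbrelaxed} of $\coutint_{d}^{\mathrm{R}}$.
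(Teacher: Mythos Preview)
Your proof is correct and follows the same backward-induction skeleton as the paper's, but you take a slightly longer path at the key step. The paper does not invoke the nonincreasingness of $V_{d+1}^{\mathrm{i}}$ at all: starting from the definition~\eqref{tts:bellman-2level} of $V_d^{\mathrm{i}}(x)$, it simply restricts the random variable $\va{x}_{d+1}$ to \emph{constant} random variables $r_{d+1}\in\XX_{d+1}$, which shrinks the feasible set and hence can only raise the infimum; the inner minimization over $\va{u}_d$ is then recognized as $\coutint_d^{\mathrm{R}}(x,r_{d+1})$, and the induction hypothesis finishes the job. Your route instead keeps $\va{x}_{d+1}=\dynamics_d(x,\va{u}_d,\va{w}_d)$ and uses Assumption~\ref{tts:hyp:nonincreasing} (transferred to $V_{d+1}^{\mathrm{i}}$ via Proposition~\ref{prop:dailymonotone}) to compare $V_{d+1}^{\mathrm{i}}(r_{d+1})$ with $V_{d+1}^{\mathrm{i}}\bp{\dynamics_d(x,\va{u}_d,\va{w}_d)}$. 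This works, but it is an unnecessary detour: Assumption~\ref{tts:hyp:nonincreasing} is in fact not needed for the inequality $V_d^{\mathrm{i}}\le\overline{V}_d^{\mathrm{R}}$ itself, only for the surrounding framework. The paper's argument is therefore slightly sharper, while yours makes explicit use of a hypothesis that the statement provides anyway.
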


\begin{proof}
  The proof is done by backward induction. We first have that
  $\overline{V}_{D+1}^{\mathrm{R}} = \coutfin = V_{D+1}^{\mathrm{i}}$.
  Now, consider $d\in \longset$ and assume that
  $V_{d+1}^{\mathrm{i}} \le \overline{V}_{d+1}^{\mathrm{R}}$.
  We successively have
  \begin{align*}
    V_{d}^{\mathrm{i}}(x)
    & = \inf_{\va{x}_{d+1},\va{u}_{d}} \EE \bc{ \coutint_{d}(x,\va{u}_d,\va{w}_{d})
      + V_{d+1}^{\mathrm{i}}(\va{x}_{d+1})} \\
    & \hspace{1cm} \text{s.t} \quad \dynamics_{d}(x,\va{u}_d,\va{w}_{d}) \geq \va{x}_{d+1},
      \text{ and } \eqref{tts:bellman-2level-mes} \text{--} \eqref{tts:bellman-2level-mesX},
      \tag{by~\eqref{tts:bellman-2level}} \\
    & \le \inf_{{r}_{d+1} \in \XX_{d+1}} \inf_{\va{u}_{d}}
      \EE \bc{ \coutint_{d}(x,\va{u}_d,\va{w}_{d})} +
      V_{d+1}^{\mathrm{i}}({r}_{d+1}) \\
    & \hspace{1cm} \text{s.t} \quad \dynamics_{d}(x,\va{u}_d,\va{w}_{d}) \geq {r}_{d+1}
      \text{ and } \eqref{tts:bellman-2level-mes}
      \tag{by considering constant r.v.~$\va{x}_{d+1}$} \\
    & =  \inf_{{r}_{d+1} \in \XX_{d+1}}
      \bp{\coutint_{d}^{\mathrm{R}}(x_{d},r_{d+1}) +
      V_{d+1}^{\mathrm{i}}({r}_{d+1})}
      \tag{by definition~\eqref{tts:eq:intrapbrelaxed}} \\
    & \le  \inf_{{r}_{d+1} \in \XX_{d+1}}
      \bp{\coutint_{d}^{\mathrm{R}}(x_{d},r_{d+1}) +
      \overline{V}_{d+1}^{\mathrm{R}}({r}_{d+1})}
      \tag{by the induction assumption} \\
    & = \overline{V}_{d}^{\mathrm{R}}(x)\eqfinp
      \tag{by definition~\eqref{tts:eq:bellman-relaxed-det}}
  \end{align*}
  This ends the proof.
\end{proof}

\subsection{Periodicity classes and algorithms}
\label{subsec:periodicity}

To compute the upper bound Bellman value functions $\ba{\overline{V}_{d}^{\mathrm{R}}}_{\longset\cup\{\longfinal\}}$
given by Equation~\eqref{tts:eq:bellman-relaxed-det} (respectively
the lower bound Bellman value functions $\ba{\underline{V}_{d}^{\mathrm{P}}}_{d\in\longset\cup\{\longfinal\}}$
given by Equation~\eqref{tts:eq:bellman-dualized-det}), we need
to compute the value of the fast scale optimization problems $\coutint_{d}^{\mathrm{R}}(x_{d},r_{d+1})$ defined
by~\eqref{tts:eq:intrapbrelaxed}
(respectively~$\coutint_{d}^{\mathrm{P}}(x_{d},\price_{d+1})$ defined
by~\eqref{tts:eq:intrapbdual}), for all~$d\in\longset\cup\{\longfinal\}$
and for all couples~$(x_{d},r_{d+1})\in\XX_d\times\XX_{d+1}$ (respectively
for all couples~$(x_{d},\price_{d+1})\in\XX_d\times\pricespace_{d+1}$
with~$\price_{d+1} \leq 0$).  The computational cost can be significant
as we need to solve a stochastic optimization problem for every
couple~$(x_d,r_{d+1})\in\XX_d\times\XX_{d+1}$, for every
couple~$(x_d,\price_{d+1})\in\XX_d\times\pricespace_{d+1}$ and
for every~$d$ in~$\longset$. We present a simplification exploiting
a possible periodicity at the fast time scale.

\begin{proposition}
  \label{tts:prop:periodicity}
  Let~$\mathbb{I} \subset \longset$ be a nonempty subset.
  Assume that there exists two sets~$\XX_{\mathbb{I}}$
  and~$\UU_{\mathbb{I}}$ such that for any~$d \in \mathbb{I}$, $\XX_{d} = \XX_{\mathbb{I}}$
  and $\UU_{d} = \UU_{\mathbb{I}}$. Assume, moreover,
  that there exists two mappings
  $\coutint_{\mathbb{I}}$ and $\dynamics_{\mathbb{I}}$ such that
  for any~$d \in \mathbb{I}$,
  $\coutint_{d} = \coutint_{\mathbb{I}}$ and~$\dynamics_{d} = \dynamics_{\mathbb{I}}$.
  Finally assume that the random variables~$\{ \va{w}_{d}\}_{d \in \mathbb{I}}$
  are independent and identically distributed. Then, there exists two
  functions~$\coutint_{\mathbb{I}}^{\mathrm{R}}$ and~$\coutint_{\mathbb{I}}^{\mathrm{P}}$
  such that the
  functions~$\coutint_{d}^{\mathrm{R}}$ defined by~\eqref{tts:eq:intrapbrelaxed}
  and~$\coutint_{d}^{\mathrm{P}}$ defined by~\eqref{tts:eq:intrapbdual}
  satisfy
  \begin{equation}
    \label{tts:eq:dailyeq}
    \coutint_{d}^{\mathrm{R}} = \coutint_{\mathbb{I}}^{\mathrm{R}}
    \quad \text{and} \quad
    \coutint_{d}^{\mathrm{P}} = \coutint_{\mathbb{I}}^{\mathrm{P}}
    \eqsepv \forall d \in \mathbb{I}
    \eqfinp
  \end{equation}
\end{proposition}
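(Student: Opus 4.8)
The plan is to establish~\eqref{tts:eq:dailyeq} almost by definition, by checking that the fast-scale value functions~$\coutint_{d}^{\mathrm{R}}$ and~$\coutint_{d}^{\mathrm{P}}$ depend on the slow index~$d$ \emph{only} through the problem data~$\XX_{d}$, $\XX_{d+1}$, $\UU_{d}$, $\coutint_{d}$, $\dynamics_{d}$ and the probability distribution of~$\va{w}_{d}$, and then observing that under the hypotheses all of these coincide for~$d\in\mathbb{I}$, so that the very optimization problems defining~$\coutint_{d}^{\mathrm{R}}$ and~$\coutint_{d}^{\mathrm{P}}$ are one and the same for every~$d\in\mathbb{I}$. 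The one point that is not pure bookkeeping is that the infima in~\eqref{tts:eq:intrapbrelaxed} and~\eqref{tts:eq:intrapbdual} are taken over \emph{random} controls~$\va{u}_{d}$, whereas the hypotheses only fix the \emph{law} of~$\va{w}_{d}$; I expect this to be the main (and essentially only) delicate step, and I handle it by a standard feedback reformulation.

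Concretely, I would fix~$d\in\longset$ and~$(x_{d},r_{d+1})\in\XX_{d}\times\XX_{d+1}$ and rewrite the admissibility set of~\eqref{tts:eq:intrapbrelaxed}: by the Doob functional representation lemma, a process~$\va{u}_{d}$ satisfies~$\sigma(\va{u}_{d,m})\subset\sigma(\va{w}_{d,0:m})$ for all~$m\in\shortset$ if and only if there exist measurable maps~$\gamma_{d,m}:\WW_{d,0:m}\to\UU_{d,m}$ with~$\va{u}_{d,m}=\gamma_{d,m}(\va{w}_{d,0:m})$; write~$\gamma_{d}$ for the resulting map~$\WW_{d}\to\UU_{d}$ defined by~$\gamma_{d}(w_{d})=(\gamma_{d,m}(w_{d,0:m}))_{m\in\shortset}$. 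Letting~$\prbt_{\va{w}_{d}}$ denote the distribution of~$\va{w}_{d}$ on~$\WW_{d}=\WW_{d,0:M}$, the transfer theorem gives~$\EE\bc{\coutint_{d}(x_{d},\va{u}_{d},\va{w}_{d})}=\int_{\WW_{d}}\coutint_{d}\bp{x_{d},\gamma_{d}(w_{d}),w_{d}}\,\prbt_{\va{w}_{d}}(\mathrm{d}w_{d})$, and, since~$r_{d+1}$ is deterministic, the almost sure constraint~$\dynamics_{d}(x_{d},\va{u}_{d},\va{w}_{d})\ge r_{d+1}$ is equivalent to~$\dynamics_{d}\bp{x_{d},\gamma_{d}(w_{d}),w_{d}}\ge r_{d+1}$ for~$\prbt_{\va{w}_{d}}$-almost every~$w_{d}$. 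Hence~$\coutint_{d}^{\mathrm{R}}(x_{d},r_{d+1})$ equals the infimum, over all feedback families~$\gamma_{d}=(\gamma_{d,m})_{m\in\shortset}$ satisfying that~$\prbt_{\va{w}_{d}}$-a.s.\ inequality, of that integral --- an expression that depends on~$d$ only through~$\coutint_{d}$, $\dynamics_{d}$, the factors~$\UU_{d,m}$ and~$\WW_{d,m}$, the law~$\prbt_{\va{w}_{d}}$, and the point~$(x_{d},r_{d+1})\in\XX_{d}\times\XX_{d+1}$. The identical manipulation applied to~\eqref{tts:eq:intrapbdual}, now with integrand~$\coutint_{d}\bp{x_{d},\gamma_{d}(w_{d}),w_{d}}+\bscal{\price_{d+1}}{\dynamics_{d}(x_{d},\gamma_{d}(w_{d}),w_{d})}$ and no constraint beyond measurability of the~$\gamma_{d,m}$, yields the analogous expression for~$\coutint_{d}^{\mathrm{P}}(x_{d},\price_{d+1})$.

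Finally I would read off~\eqref{tts:eq:dailyeq}. For~$d\in\mathbb{I}$ the hypotheses give~$\XX_{d}=\XX_{\mathbb{I}}$, $\UU_{d}=\UU_{\mathbb{I}}$, $\coutint_{d}=\coutint_{\mathbb{I}}$ and~$\dynamics_{d}=\dynamics_{\mathbb{I}}$; since~$\dynamics_{\mathbb{I}}$ is a single fixed mapping, its codomain~$\XX_{d+1}$ and the space~$\WW_{d}$ carrying its noise argument (hence the factors~$\WW_{d,m}$, and likewise~$\UU_{d,m}$) do not depend on~$d$ on~$\mathbb{I}$, and the equality of the marginal laws of the~$\va{w}_{d}$, $d\in\mathbb{I}$, makes~$\prbt_{\va{w}_{d}}$ one fixed measure for all~$d\in\mathbb{I}$. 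Therefore the optimization problems defining~$\coutint_{d}^{\mathrm{R}}$ and~$\coutint_{d}^{\mathrm{P}}$ are independent of~$d\in\mathbb{I}$, and letting~$\coutint_{\mathbb{I}}^{\mathrm{R}}$ and~$\coutint_{\mathbb{I}}^{\mathrm{P}}$ be their common values proves~\eqref{tts:eq:dailyeq}. (Only equality of the marginal laws is actually used; the independence half of the i.i.d.\ hypothesis is stated because it is needed for the surrounding Bellman/\DP\ arguments, not here.) As flagged, the single nontrivial ingredient is the feedback reformulation of the first step --- that passing from measurable random controls to measurable feedback maps, and from~$\prbt$ to~$\prbt_{\va{w}_{d}}$, changes neither the objective nor the almost sure constraint --- which is nonetheless a routine application of the Doob representation lemma and the image-measure theorem; everything else is bookkeeping.
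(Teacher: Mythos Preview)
Your proof is correct and follows essentially the same approach as the paper, which simply states that the result ``is an immediate consequence of the assumptions made on the functions~$\coutint_{d}$ and~$\dynamics_{d}$ and on~$\va{w}_{d}$.'' Your version is considerably more careful --- in particular, the feedback reformulation via Doob's lemma makes precise why only the \emph{law} of~$\va{w}_{d}$ matters, a point the paper leaves implicit --- but the underlying idea is the same.
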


\begin{proof}
  The proof is an immediate consequence of the assumptions made on the
  functions~$\coutint_{d}$ and~$\dynamics_{d}$ and on~$\va{w}_{d}$.
\end{proof}

The nonempty set~$\mathbb{I}$ in Proposition~\ref{tts:prop:periodicity}
is called a \emph{periodicity class}. We denote by~$I$ the number of
periodicity classes of Problem~\eqref{tts:eq:2tspb} and
by~$(\mathbb{I}_1,\dots,\mathbb{I}_{I})$ the periodicity classes (possibly
reduced to singletons), that is,
the sets of slow time indices that satisfy~\eqref{tts:eq:dailyeq}.

\begin{remark}
  A periodicity property often appears in long term energy management
  problems with renewable energies, due to yearly seasonality of natural
  processes such as solar production. In these cases~$I<D+1$ and
  it is enough to solve only~$I$ problems at the fast time scale.

  When there is no periodicity, $I = D+1$ and the periodicity
  classes are singletons. In this case all the fast time scale
  problems have to be computed.
\end{remark}

The ways to obtain the upper bound and lower bound Bellman value
functions are presented in Algorithm~\ref{tts:alg:2tssdp}
and~Algorithm~\ref{tts:alg:2tssdpdual}. They allow to efficiently
compute fast time scale problems and upper and lower bounds for the
Bellman value functions using resource and price decompositions.

\begin{algorithm}[H]
  \KwData{Periodicity classes~$(\mathbb{I}_1, \dots, \mathbb{I}_{I})$}
  \KwResult{Upper bound Bellman value functions
  $\ba{\overline{V}_{d}^{\mathrm{R}}}_{d\in\longset\cup\{\longfinal\}}$}
  \textbf{Initialization}: $\overline{V}_{\longfinal}^{\mathrm{R}} = \coutfin$ \\
  \For{$i = 1,\dots,I$}{
    Choose a~$d_{i} \in \mathbb{I}_{i}$ \\
    \For{$(x,r) \in \XX_{d_{i}} \times \XX_{d_{i}+1}$}{
      Compute~$\coutint_{\mathbb{I}_{i}}^{\mathrm{R}}(x,r)=\coutint_{d_{i}}^{\mathrm{R}}(x,r)$
      and set~$\coutint_{d}^{\mathrm{R}}(x,r)=\coutint_{\mathbb{I}_{i}}^{\mathrm{R}}(x,r)$
      for all~$d\in\mathbb{I}_{i}$}
  }
  \For{$d = D,D-1,\dots,0$}{
    \For{$x_d \in \XX_{d}$}{
      Solve~$\overline{V}_{d}^{\mathrm{R}}(x_d) = \inf_{r_{d+1}}
      \coutint_{d}^{\mathrm{R}}(x_{d},r_{d+1})
      + \overline{V}_{d+1}^{\mathrm{R}}(r_{d+1})$
    }
  }
  \caption{Two-time-scale \DP\ with deterministic resources
           and periodicity classes}
  \label{tts:alg:2tssdp}
\end{algorithm}

\begin{algorithm}[H]
  \KwData{Periodicity classes~$(\mathbb{I}_1,\dots,\mathbb{I}_{I})$}
  \KwResult{Lover bound Bellman value functions
  $\ba{\underline{V}_{d}^{\mathrm{P}}}_{d\in\longset\cup\{\longfinal\}}$}
  Initialization: $\underline{V}_{D+1}^{\mathrm{P}} = \coutfin$ \\
  \For{$i = 1,\dots,I$}{
    Choose a~$d_{i} \in \mathbb{I}_{i}$ \\
    \For{$(x,\price) \in \XX_{d_{i}} \times \pricespace_{d_{i}+1} , \; \price \leq 0$}{
      Compute~$\coutint_{\mathbb{I}_{i}}^{\mathrm{P}}(x,\price)=\coutint_{d_{i}}^{\mathrm{P}}(x,\price)$
      and set~$\coutint_{d}^{\mathrm{P}}(x,\price)=\coutint_{\mathbb{I}_{i}}^{\mathrm{P}}(x,\price)$ for all~$d\in\mathbb{I}_{i}$}
  }
  \For{$d = D,D-1,\dots,0$}{
    \For{$x_d \in \XX_{d}$}{
      Solve~$\underline{V}_{d}^{\mathrm{P}}(x) = \sup_{\price_{d+1} \leq 0}
      \coutint_{d}^{\mathrm{P}}(x,\price_{d+1})
      - \LFM{\bp{\underline{V}_{d+1}^{\mathrm{P}}}}(\price_{d+1})$
    }
  }
  \caption{Two-time-scale \DP\ with deterministic prices
           and periodicity classes}
  \label{tts:alg:2tssdpdual}
\end{algorithm}

\begin{remark}
  The interest of Algorithm \ref{tts:alg:2tssdp}
  (resp. \ref{tts:alg:2tssdpdual}) is that we can solve
  the~$I$ fast time scale problems~\eqref{tts:eq:intrapbrelaxed}
  (resp. \eqref{tts:eq:intrapbdual}) in parallel and then distribute
  the numerical solving of these problems across slow time steps.
  Moreover, we can theoretically apply any stochastic optimization method
  to solve the fast time scale problems. Without stagewise independence
  assumption at the fast time scale, we may use \SP\ techniques (for example
  scenario trees and progressive hedging~\cite{Rockafellar-Wets:1991})
  to solve the fast time scale problems.
  With stagewise independence assumption, we may apply \SDP\ or \SDDP.
\end{remark}

\subsection{Gathering the results to compute policies}
\label{sec:computing_policies}

We assume that we have at disposal Bellman value
functions~$\{\widetilde V_d\}_{d\in\longset\cup\longfinal}$
obtained either by resource decomposition
($\widetilde V_d = \overline{V}_{d}^{\mathrm{R}}$),
or by price decomposition ($\widetilde V_d =
\underline{V}_{d}^{\mathrm{P}}$).
The computation of the~$\widetilde V_d$'s, that
is, the computation of the~$\overline{V}_{d}^{\mathrm{R}}$'s
or~$\underline{V}_{d}^{\mathrm{P}}$'s, constitutes
the \emph{offline} part of the algorithms, as described
in algorithms~\ref{tts:alg:2tssdp} and~\ref{tts:alg:2tssdpdual}.
We recall that, under Assumption~\ref{tts:hyp:indep}
and~\ref{tts:hyp:nonincreasing}, and
thanks to Propositions~\ref{prop:dailymonotone},
\ref{tts:prop:dualintraineq} and~\ref{tts:prop:relaxedintraineq},
we have the inequalities
\begin{equation*}
  \underline{V}_{d}^{\mathrm{P}}
  \leq V_{d}^{\mathrm{e}} \leq \overline{V}_{d}^{\mathrm{R}}
  \eqsepv \forall d\in\longset\cup\{\longfinal\} \eqfinp
\end{equation*}
Then, for a given slow time step~$d\in\longset$ and a given current
state $x_d \in \XX_d$, we can use~$\widetilde V_{d}$ as an approximation
of the Bellman value function~$V_{d}^{\mathrm{e}}$ in order to state
a new fast time scale problem starting at~$d$ for computing the decisions
to apply at~$d$. This constitutes the \emph{online} part of the procedure:
\begin{subequations}
  \label{tts:eq:realintrapb}
  \begin{align}
    \va{u}\opt_{d} \in   \argmin_{\va{u}_{d}} \;
    & \EE \Bc{\coutint_{d}(x_{d},\va{u}_{d},\va{w}_{d})
      + \widetilde V_{d+1} \bp{\dynamics_{d}(x_{d},\va{u}_{d},\va{w}_{d})}}
      \eqfinv \\
    \text{s.t.} \quad
    & \sigma(\va U_{d,m}) \subset \sigma(\va{w}_{d,0:m})
      \eqsepv \forall m \in \shortset \eqfinp
  \end{align}
\end{subequations}
This problem can be solved by any method that provides an online
policy as presented in~\cite{tts:bertsekas2005dynamic}.
The presence of a final cost~$\widetilde V_{d}$ ensures that
the effects of decisions made at the fast time scale are taken
into account at the slow time scale.

Nevertheless, it would be time-consuming to produce online policies
using the numerical solving of Problem~\eqref{tts:eq:realintrapb} for every
slow time step of the horizon in simulation. We present in the next
two paragraphs how to obtain two-time-scale policies with
prices or resources in a smaller amount of time.

\subsubsubsection{Obtaining a policy using prices}

In the case where we decompose the problem using deterministic prices,
we possibly solve Problem~\eqref{tts:eq:intrapbdual} for every
couple of initial state and deterministic price
$(x_d,\price_{d+1}) \in \XX_d \times \pricespace_{d+1}$ and
for every $d\in\longset$. In the process, an optimal policy for
every problem has been computed. For~$d\in\longset$ and
for~$(x_d,\price_{d+1}) \in \XX_d \times \pricespace_{d+1}$,
we call~$\pi_{d}^{\mathrm{P}}(x_d,\price_{d+1}): \WW_d \to \UU_{d}$
an optimal policy for Problem~\eqref{tts:eq:intrapbdual}, whose value
is~$\coutint_{d}^{\mathrm{P}}(x_d,\price_{d+1})$.

At the beginning of a slow time step~$d$ in a state~$x_d \in \XX_d$,
we compute a price~$\price_{d+1}$ solving the following optimization
problem
\begin{equation}
  \label{tts:eq:simulatingprice}
  \price_{d+1} \in \argmax_{\price \leq 0}
  \Bp{\coutint_{d}^{\mathrm{P}}(x_d, \price) -
    \LFM{\bp{\underline{V}_{d+1}^{\mathrm{P}}}}(\price)} \eqfinp
\end{equation}
Thanks to this deterministic price~$\price_{d+1}$, we apply the corresponding
policy~$\pi_{d}^{\mathrm{P}}(x_d, \price_{d+1})$ to simulate  decisions
and states drawing a scenario~$w_d$ out of the random process~$\va W_d$.
The next state~$x_{d+1}$ at the beginning of the slow time step~$d+1$ is then
$x_{d+1} = \dynamics_{d} \bp{x_d, \pi_{d}^{\mathrm{P}}(x_d, \price_{d+1})(w_d), w_d}$.

\subsubsubsection{Obtaining a policy using resources}

In the case where we decompose the problem using deterministic resources,
we possibly solve Problem~\eqref{tts:eq:intrapbrelaxed} for every
couple of initial state and deterministic resource
$(x_d, x_{d+1}) \in \XX_d \times \XX_{d+1}$ and for every $d \in\longset$.
In the process, an optimal policy for every problem has been computed.
For~$d\in\longset$ and for~$(x_d,x_{d+1}) \in \XX_d \times \XX_{d+1}$,
we call~$\pi_{d}^{\mathrm{R}}(x_d, x_{d+1}): \WW_d \to \UU_{d}$
an optimal policy for Problem~\eqref{tts:eq:intrapbrelaxed}
whose value is~$\coutint_{d}^{\mathrm{R}}(x_d, x_{d+1})$.

At the beginning of a slow time step~$d$ in a state~$x_d \in \XX_d$,
we compute a resource (state)~$x_{d+1}$ solving the following optimization
problem
\begin{equation}
  \label{tts:eq:simulatingresource}
  x_{d+1} \in \argmin_{x \in \XX_{d+1}}
  \Bp{\coutint_{d}^{\mathrm{R}}(x_d, x) + { \overline{V}_{d+1}^{\mathrm{R}}(x)}}
  \eqfinv
\end{equation}
and we apply the corresponding policy~$\pi_{d}^{\mathrm{R}}(x_d, x_{d+1})$
to simulate decisions and states drawing a scenario~$w_d$ out
of~$\va W_d$. The next state~$x_{d+1}$ at the beginning
of the slow time step~$d+1$ is then
$x_{d+1}=\dynamics_d \bp{x_d,\pi_{d}^{\mathrm{R}}(x_d, x_{d+1})(w_d),w_d}$.

\section{Case study}
\label{tts:sec:experiments}

In this section, we apply the previous theoretical results
to a long term aging and battery renewal management problem.
In~\S\ref{tts:ssec:experiments-formulation}, we formulate
the problem. In~\S\ref{Simplifying_the_intraday_problems},
we simplify the intraday problems.
In~\S\ref{tts:ssec:experiments-setup},
we describe the data used for the numerical experiments.
Finally, in~\S\ref{Numerical_experiments}, we sketch how
to apply resource and price decomposition algorithms,
and we compare the results given by each of these methods.

\subsection{Problem formulation}
\label{tts:ssec:experiments-formulation}

As explained in~\S\ref{tts:ssect-motivation}, we are dealing with
the energy storage management problem of a battery over a very long
term ($20$~years). We adopt the notations defined in~\S\ref{ssect:tts-notations}.
The total number of slow time steps (days) in the time horizon is denoted
by~$D+1$ ($D= 20 \times 365 = 7300$),
and each slow time interval~$\ClosedIntervalOpen{d}{d+1}$ consists
of~$M+1$ fast time steps (half hour, hence $M = 24 \times 2 = 48$).

\begin{subequations}
  \label{tts:dynamics-detailed-fast}
  At the fast time scale, the system control is the energy~$\va{u}_{d,m}$
  transferred in the battery. We denote the charge of the battery
  by~$\va{u}_{d,m}^+ = \max \{0,\va{u}_{d,m}\}$, and the discharge
  of the battery  by~$\va{u}_{d,m}^- = \max \{0,-\va{u}_{d,m}\}$.
  For all time\footnote{There
  is here a slight difference with the notations presented
  in~\S\ref{ssect:tts-notations}: we have added a new time step
  $(d,M{+}1)$ at the end of day~$d$ in order to apply the last
  fast control of day~$d$ and the slow control of day~$d{+}1$
  at distinct time steps, hence the introduction of a fictitious
  time step --- denoted by~$(d,M{+}1)$ --- between~$(d,M1)$ and~$(d{+}1,0)$
  (see Equation~\eqref{tts:dynamics-detailed-last} and comments above).}~$(d,m)\in\longset \times \timeset{0}{M{+}1}$,
  the state of the battery consists of
  \begin{itemize}
  \item
    the amount of energy in the battery~$\va{s}_{d,m}$
    (state of charge), whose dynamics is given by the simple storage dynamics equation
    \begin{equation}
      \label{tts:dynamics-detailed-fast-S}
      \va{s}_{d,m+1} = \va{s}_{d,m} +
      \rho^{\mathrm{c}} \va{u}_{d,m}^{+} - \rho^{\mathrm{d}} \va{u}_{d,m}^{-} \eqfinv
    \end{equation}
    where~$\rho^{\mathrm{c}}$ and~$\rho^{\mathrm{d}}$ are the charge
    and discharge coefficients of the battery,
  \item
    the amount of remaining exchangeable energy~$\va{h}_{d,m}$
    (health of the battery), with
    \begin{equation}
      \label{tts:dynamics-detailed-fast-H}
      \va H_{d,m+1} = \va H_{d,m} -
      \va U_{d,m}^+ - \va U_{d,m}^- \eqfinv
    \end{equation}
    so that health decreases with variations in energy exchanges,
  \item
    the capacity~$\va{c}_{d,m}$ of the battery
    (assumed to be constant at the fast time scale)
    \begin{equation}
      \label{tts:dynamics-detailed-fast-C}
      \va{c}_{d,m+1} = \va{c}_{d,m} \eqfinp
    \end{equation}
  \end{itemize}
  These equations at the fast time scale are gathered as
  \begin{equation}
    \label{tts:dynamics-regrouped-fast}
    (\va{s}_{d,m+1},\va{h}_{d,m+1},\va{c}_{d,m+1}) =
    \varphi\bp{\va{s}_{d,m},\va{h}_{d,m},\va{c}_{d,m},\va{u}_{d,m}} \eqfinp
  \end{equation}
\end{subequations}
\begin{subequations}
  \label{tts:dynamics-detailed-last}
  At the slow time scale, that is, for each slow time step~$d$,
  there exists another control~$\va{r}_{d}$ modeling the possible
  renewal of the battery at the end of the slow time step. To take
  it into account, we add a fictitious time step~$(d,M{+}1)$ between~$(d,M)$
  and~$(d{+}1,0)$. The dynamics of the battery for this specific time
  step are:
  \begin{align}
    & \va{s}_{d+1,0} =
      \begin{cases}
        0              & \text{ if } \va{r}_{d} > 0 \eqfinv \\
        \va{s}_{d,M+1} & \text{ otherwise} \eqfinv
      \end{cases} \label{tts:dynamics-detailed-last-S}
    \intertext{meaning that, when renewed, a new battery is empty,}
    & \va{h}_{d+1,0} =
      \begin{cases}
        \fhealth(\va{r}_{d}) \va{r}_{d} & \text{ if } \va{r}_{d} > 0 \eqfinv \\
        \va{h}_{d,M+1}             & \text{ otherwise} \eqfinv
      \end{cases} \label{tts:dynamics-detailed-last-H}
     \intertext{meaning that, when renewed, the health of
     a battery  is the product of the new battery capacity~$\va{r}_{d}$
     by an integer-valued function~$\fhealth : \RR_+ \to \NN$ estimated at~$\va{r}_{d}$,}
    & \va{c}_{d+1,0} =
      \begin{cases}
        \va{r}_{d}     & \text{ if } \va{r}_{d} > 0 \eqfinv \\
        \va{c}_{d,M+1} & \text{ otherwise} \eqfinv
      \end{cases} \label{tts:dynamics-detailed-last-C}
  \end{align}
  corresponding to the renewal of the battery.
  These equations are gathered as:
  \begin{equation}
    \label{tts:dynamics-regrouped-last}
    (\va{s}_{d+1,0},\va{h}_{d+1,0},\va{c}_{d+1,0}) =
    \psi\bp{\va{s}_{d,M+1},\va{h}_{d,M+1},\va{c}_{d,M+1},\va{r}_{d}} \eqfinp
  \end{equation}
\end{subequations}
We assume that the initial state of the battery is known, that is,
$(\va{s}_{0,0},\va{h}_{0,0},\va{c}_{0,0})= (s_{0},h_{0},c_{0})$.

\begin{subequations}
  All the control variables are subject to bound constraints
  \begin{align}
    & \va U_{d,m} \in \nc{\underline{U},\overline{U}} \quad , \quad
      \va{r}_{d} \in \nc{0,\overline{R}} \eqfinv
      \intertext{(with~$\underline{U}<0$ and~$\overline{U}>0$),
      as well as the state variables:}
    & \va S_{d,m} \in \nc{0,\FractionOfTheCapacity \, \va{c}_{d,m}} \quad , \quad
      \va H_{d,m} \in \bc{0,\fhealth(\va{c}_{d,m}) \, \va{c}_{d,m}} \quad , \quad
      \va c_{d,m} \in \nc{0,\overline{R}} \eqfinp
  \end{align}
  Note that the amount of remaining exchangeable energy~$\va{h}_{d,m}$
  has to be positive for the battery to operate, and that the upper
  bound on the state of charge~$\va{S}_{d,m}$ is a fraction~$\FractionOfTheCapacity$
  of the capacity~$\va{c}_{d,m}$.
  \label{tts:bounds-detailed}
\end{subequations}

At each fast time step~$(d,m)$, a local renewable energy production
unit produces energy and a local demand consumes energy: we denote
by~$\va{d}_{d,m}$ the net demand (consumption minus production) and we
suppose that it is an exogenous random variable.
The excess energy consumption
$\max(0,\va{d}_{d,m} + \va{u}_{d,m}^+ - \va{u}_{d,m}^-)$
is paid at a given price~$\pi^e_{d,m}$ (whereas excess energy production
is assumed to be wasted). The price~$\va P^b_d$ of a new battery is also
supposed to be random,
so that the total cost during the slow time step~$d$ is
\begin{equation}
  \label{tts:cost-detailed}
  \sum_{m=0}^{M} \pi^e_{d,m}
  \max(0,\va{d}_{d,m} + \va{u}_{d,m}^+ - \va{u}_{d,m}^-)
  + \va{p}^b_d \va R_d \eqfinp
\end{equation}
The value of battery at the end of the optimization horizon is
represented by a cost function~$K$ depending on the state
of the battery. Then, the objective function to be minimized
is the expected sum over the optimization horizon of the discounted
daily costs (discount factor~$\gamma$), plus the final cost~$K$.
Finally, the optimization problem under consideration is:
\begin{subequations}
  \label{eq:2tsmotivlong}
  \begin{align}
    \inf_{\na{\va{u}_{d,0:M},\va{r}_{d}}_{d\in\timeset{0}{D}}} \;
    & \espe \Bigg[
      \sum_{d=0}^D \gamma^{d}
      \bgp{ \sum_{m=0}^{M} \pi^e_{d,m}
      \max(0,\va{d}_{d,m} + \va{u}_{d,m}^+ - \va{u}_{d,m}^-)
      + \va{p}^{b}_{d} \va R_d } \nonumber \\
    & \qquad\qquad\qquad\qquad\qquad\qquad\qquad +
      \coutfin(\va S_{D+1,0}, \va H_{D+1,0}, \va C_{D+1,0})\Bigg]
      \label{eq:2tsmotivlong-cost} \eqfinv
      \intertext{subject, for all~$(d,m)\in\longset\times\shortset$,
      to state dynamics}
    & (\va{s}_{0,0},\va{h}_{0,0},\va{c}_{0,0})= (s_{0},h_{0},c_{0})
      \label{eq:2tsmotivlong-dyn-ini} \eqfinv \\
    & (\va{s}_{d,m+1},\va{h}_{d,m+1},\va{c}_{d,m+1}) =
      \varphi\bp{\va{s}_{d,m},\va{h}_{d,m},\va{c}_{d,m},\va{u}_{d,m}}
      \label{eq:2tsmotivlong-dyn-fast} \eqfinv \\
    & (\va{s}_{d+1,0},\va{h}_{d+1,0},\va{c}_{d+1,0}) =
      \psi\bp{\va{s}_{d,M+1},\va{h}_{d,M+1},\va{c}_{d,M+1},\va{r}_{d}}
      \label{eq:2tsmotivlong-dyn-last}\eqfinv
      \intertext{%for all~$(d,m)\in\longset\times\shortset$,
      to bounds constraints}
    & \va S_{d,m} \in \nc{0,\FractionOfTheCapacity \, \va{c}_{d,m}} \eqsepv
      \va H_{d,m} \in \bc{0,\fhealth(\va{c}_{d,m}) \, \va{c}_{d,m}} \eqsepv
      \va c_{d,m} \in \nc{0,\overline{R}}
      \label{eq:2tsmotivlong-constx} \eqfinv \\
    & \va U_{d,m} \in \nc{\underline{U},\overline{U}} \eqsepv
      \va{r}_{d} \in \nc{0,\overline{R}}
      \label{eq:2tsmotivlong-constu} \eqfinv
      \intertext{and to nonanticipativity constraints}
    & \sigma(\va{u}_{d,m}) \subset
      \sigma\bp{\va{d}_{0,0},\dots,\va{d}_{d,m},
      \va{p}^{b}_{0},\dots,\va{p}^{b}_{d-1}}
      \label{eq:2tsmotivlong-nonantu} \eqfinv \\
    & \sigma(\va{r}_{d}) \subset
      \sigma\bp{\va{d}_{0,0},\dots,\va{d}_{d,M},
      \va{p}^{b}_{0},\dots,\va{p}^{b}_{d}}
      \label{eq:2tsmotivlong-nonantr} \eqfinp
  \end{align}
\end{subequations}
\begin{subequations}
We denote by~$\va{u}_{d}$ the vector of decision variables
to be taken during the slow time step~$d$, that is,
\begin{equation}
\va{u}_{d}=\bp{\na{\va{u}_{d,m}}_{m\in\shortset},\va{r}_{d}}
\eqfinp
\end{equation}
We also denote by~$\va{w}_{d}$ the vector of noise variables
occurring during the slow time step~$d$:
\begin{equation}
\va{w}_{d}=\bp{\na{\va{d}_{d,m}}_{m\in\shortset},\va{p}^{b}_{d}}
\eqfinv
\end{equation}
and by~$\va{x}_{d}$ the vector of state variables at the beginning
of the slow time step~$d$:
\begin{equation}
\va{x}_{d}=\np{\va{s}_{d,0},\va{h}_{d,0},\va{c}_{d,0}} \eqfinp
\end{equation}
\end{subequations}
Problem~\eqref{eq:2tsmotivlong} is amenable to the
form~\eqref{tts:eq:2tspb} and thus fits the framework
developed in \S\ref{tts:sec:bounds} for two-time-scale
optimization problems.
\begin{itemize}
\item
  In the expression of~$(\va{s}_{d+1,0},\va{h}_{d+1,0},\va{c}_{d+1,0})$
  given by~\eqref{eq:2tsmotivlong-dyn-last}, replacing the
  variable~$\va{s}_{d,m}$ recursively from~$m=M+1$ to~$m=1$ by using~\eqref{eq:2tsmotivlong-dyn-fast}, one obtains a slow time
  scale dynamics of the form~\eqref{tts:eq:2tsdyn}:
  \begin{equation}
    \label{tts:dynamics-regrouped-slow}
    (\va{s}_{d+1,0},\va{h}_{d+1,0},\va{c}_{d+1,0}) =
    \dynamics_{d}(\va{s}_{d,0},\va{h}_{d,0},\va{c}_{d,0},
    \va{u}_{d,0:M},\va{r}_{d}) \eqfinv
  \end{equation}
\item The cost function of slow time step~$d$ in \eqref{tts:cost-detailed}
  is obviously a function depending on~$\va{u}_{d}$ and~$\va{w}_{d}$.
  The bound constraints on the control~\eqref{eq:2tsmotivlong-constu}
  (resp. the bound constraints on the state~\eqref{eq:2tsmotivlong-constx})
  only depend on~$\va{u}_{d}$ (resp. on~$(\va X_d,\va U_d,\va W_{d})$:
  indeed, in the same way we obtained Equation~\eqref{tts:dynamics-regrouped-slow},
  replacing in the right-hand side of~\eqref{eq:2tsmotivlong-dyn-fast}
  the state variable~$\va{s}_{d,m'}$ recursively from~$m'=m$ to~$m'=0$
  by using~\eqref{eq:2tsmotivlong-dyn-fast}, we obtain that the state
  $(\va{s}_{d,m+1},\va{h}_{d,m+1},\va{c}_{d,m+1})$ is a function of
  $(\va{s}_{d,0},\va{h}_{d,0},\va{c}_{d,0},\va{u}_{d,0:m})$
  for all~$m\in\timeset{0}{M}$.
  These constraints are incorporated in the cost of slow time
  step~$d$ (see Remark~\ref{rm:constraints}) makes it a extended real-valued
  function of the form $\coutint_{d}(\va X_d,\va U_d,\va W_{d})$
  as in~\eqref{tts:eq:2tscost}.
  The final cost~$K$ is, by definition, a function of~$\va{x}_{D+1}$.
\item Since~$\va{r}_{d}$ (resp.~$\va{p}_{d}^{b}$) represents a control
  (resp.~a noise) at the fictitious time step between~$(d,M+1)$
  and~$(d+1,0)$, the nonanticipativy constraints
  \eqref{eq:2tsmotivlong-nonantu} -- \eqref{eq:2tsmotivlong-nonantr}
  are of the form~\eqref{tts:eq:2tsnonant}.
\end{itemize}
We suppose that Assumption~\ref{tts:hyp:indep} is fulfilled, that is,
the net demands~$\na{\va{d}_{d,m}}_{m\in\shortset}$ and the battery
prices~$\va{p}^{b}_{d}$ are independent between slow time steps
(but can be correlated inside a slow time step).
By Proposition~\ref{prop:dailyBellmanPrinciple}, the solution
of Problem~\eqref{eq:2tsmotivlong} can be computed by \DP\ at the slow
time scale.

It is easy to figure out that the daily Bellman value functions computed
by \DP\ are nonincreasing in the state of charge~$s$ and the state
of health~$h$ because it is always preferable to have a full and
healthy battery. We prove in Appendix~\ref{tts:sec:studybatproblem}
that Assumption~\ref{tts:hyp:nonincreasing} holds true for battery
management problems, so that all the results obtained
in \S\ref{tts:sec:bounds} can be applied.

\subsection{Simplifying the intraday problems}
\label{Simplifying_the_intraday_problems}

We turn now to the computation of the
functions~$\coutint_{d}^{\mathrm{P}}$ in~\eqref{tts:eq:intrapbdual}
and~$\coutint_{d}^{\mathrm{R}}$ in~\eqref{tts:eq:intrapbrelaxed},
that we call \emph{intraday functions} in this case study.
As explained in~\S\ref{Lower_bounds_of_the_value_functions},
(resp.~\S\ref{Upper_bounds_of_the_value_functions}),
the intraday functions
$\coutint_{d}^{\mathrm{P}}$ (resp.~$\coutint_{d}^{\mathrm{R}}$)
depend on the pair~$(x_{d},\price_{d+1})$ (resp.~$(x_{d},r_{d+1})$),
namely the $6$-uple~$(s_{d},h_{d},c_{d},p^{s}_{d+1},p^{h}_{d+1},p^{c}_{d+1})$
(resp.~$(s_{d},h_{d},c_{d},s_{d+1},h_{d+1},c_{d+1})$) in the case
study under consideration. We use here some characteristics
of the problem and we make some approximations to alleviate
the computation of these intraday functions.

\subsubsection{Intraday problem associated with resource decomposition}
\label{ssect:simpl-ressource}

As explained in \S\ref{Upper_bounds_of_the_value_functions},
the aim of the resource decomposition algorithm is to compute,
for all slow time steps~$d\in\longset\cup\{\longfinal\}$,
upper bounds~$\overline{V}_{d}^{\mathrm{R}}$ of the Bellman value
functions associated with Problem~\eqref{eq:2tsmotivlong},
which can be put in the form of  Problem~\eqref{tts:eq:2tspb}.
These upper bounds are obtained by solving a collection
of intraday problems such as~\eqref{tts:eq:intrapbrelaxed}
for each slow time step~$d\in\longset$, and then by solving
the Bellman recursion~\eqref{tts:eq:bellman-relaxed-det}.
The intraday problems have a priori to be solved
for every $6$-tuple $(s_{d},h_{d},c_{d},s_{d+1},h_{d+1},c_{d+1})$,
that is, the state~$(s_{d},h_{d},c_{d})$ at the beginning of the
slow time step and the resource target~$(s_{d+1},h_{d+1},c_{d+1})$
at the end of the slow time step. This extremely computationally
demanding task is greatly simplified thanks to the following
considerations.

\subsubsubsection{Resource intraday function reduction}

Since the capacity component~$\va{c}_{d,m}$ of the state can only change
at the end of a slow time step, it is possible to take the capacity dynamics~$\va{c}_{d,m}$, the capacity control~$\va{r}_{d}$ and the
associated bound constraint, and the cost term $\va{p}_{d}^{b} \va{r}_{d}$
out of the intraday problem and to take them into account in the Bellman
recursion. To achieve that, resource decomposition does not
deal with the dynamics equation~\eqref{tts:dynamics-regrouped-last},
but with Equation~\eqref{tts:dynamics-regrouped-fast} for~$m=M$.
We introduce the two resources~$s_{d+1}$ and~$h_{d+1}$ for the state
of charge and the health of the battery\footnote{We do not associate a
  resource variable~$c_{d+1}$ with the capacity of the battery since this
component of the state is taken into account in the Bellman recursion.}.
Then, the intraday problem~\eqref{tts:eq:intrapbrelaxed} becomes
\begin{subequations}
  \label{tts:eq:intrapbrelaxed-renewal-modified}
  \begin{align}
    & \coutint_{d}^{\mathrm{R}}(s_{d},h_{d},c_{d},s_{d+1},h_{d+1})
      = \inf_{\va{u}_{d,0:M}}
      \EE \; \Bgc{ \sum_{m=0}^{M}  \pi^e_{d,m}
      \max(0,\va{d}_{d,m}+\va{u}_{d,m}^{+}-\va{u}_{d,m}^{-}) }
      \eqfinv \\
    & \text{s.t. } \;
      (\va{s}_{d,0},\va{h}_{d,0}) = (s_{d},h_{d})
      \eqfinv \label{tts:eq:intrapbrelaxed-renewal-modified-init}
      \intertext{with, for all~$m\in\shortset$,}
    & \quad\quad \va{s}_{d,m+1} = \va{s}_{d,m} +
      \rho^{\mathrm{c}} \va{u}_{d,m}^{+} - \rho^{\mathrm{d}} \va{u}_{d,m}^{-}
      \eqfinv \label{tts:eq:intrapbrelaxed-renewal-modified-S} \\
    & \quad\quad \va{h}_{d,m+1} = \va{h}_{d,m} -
      \va{u}_{d,m}^{+} - \va{u}_{d,m}^{-}
      \eqfinv \label{tts:eq:intrapbrelaxed-renewal-modified-H} \\
    & \quad\quad \va{s}_{d,M+1} \geq s_{d+1} \eqsepv
      \va{h}_{d,M+1} \geq h_{d+1}
      \eqfinv \label{tts:eq:intrapbrelaxed-renewal-modified-target} \\
    & \quad\quad \va{u}_{d,m} \in \nc{\underline{U},\overline{U}}
      \eqfinv \label{tts:eq:intrapbrelaxed-renewal-modified-boundU} \\
    & \quad\quad \va S_{d,m} \in \nc{0,
      \FractionOfTheCapacity \, c_{d}} \eqsepv
      \va H_{d,m} \in \bc{0,\fhealth(c_{d}) \, c_{d}}
      \eqfinv \label{tts:eq:intrapbrelaxed-renewal-modified-boundSH} \\
    & \quad\quad \sigma(\va{u}_{d,m}) \subset
      \sigma\bp{\va{d}_{d,0},\dots,\va{d}_{d,m}}
      \eqfinv \label{tts:eq:intrapbrelaxed-renewal-modified-mesur}
  \end{align}
and is  parameterized by the $5$-tuple
$(s_{d},h_{d},c_{d},s_{d+1},h_{d+1})$.
\end{subequations}
The sequence
$\{\overline{V}_{d}^{\mathrm{R}}\}_{d\in\longset\cup\{\longfinal\}}$
of Bellman value functions is computed by the following recursion:
\begin{subequations}
  \label{tts:eq:bellman-relaxed-det-renewal-modified}
  \begin{multline}
    \overline{V}_{d}^{\mathrm{R}}(s_{d},h_{d},c_{d}) =
    \inf_{s_{d+1},h_{d+1},\va{r}_{d}}
    \EE \Big[ \gamma \bp{ \coutint_{d}^{\mathrm{R}}(s_{d},h_{d},c_{d},s_{d+1},h_{d+1})
      + \va{p}_{d}^{b} \va{r}_{d} }
    \\
    + \overline{V}_{d+1}^{\mathrm{R}}(\va{s}_{d+1,0},\va{h}_{d+1,0},\va{c}_{d+1,0}) \Big]
    \eqfinv
  \end{multline}
  \begin{align}
    \text{s.t.} \quad
    & (\va{s}_{d+1,0},\va{h}_{d+1,0},\va{c}_{d+1,0}) =
      \psi\bp{s_{d+1},h_{d+1},c_{d},\va{r}_{d}}
      \eqfinv \label{tts:eq:bellman-relaxed-det-renewal-modified-X} \\
    & s_{d+1} \in \nc{0,\FractionOfTheCapacity \, c_{d}} \eqsepv
      h_{d+1} \in \bc{0,\fhealth(c_{d}) \, c_{d}}
      \eqfinv \label{tts:eq:bellman-relaxed-det-renewal-modified-boundX} \\
    & \va{r}_{d} \in \nc{0,\overline{R}}
      \eqfinv \label{tts:eq:bellman-relaxed-det-renewal-modified-boundR} \\
    & \sigma(\va{r}_{d}) \subset \sigma\bp{\va{p}^{b}_{d}}
      \eqfinp \label{tts:eq:bellman-relaxed-det-renewal-modified-mesurR}
  \end{align}
\end{subequations}

{Note that,
  in the Bellman recursion~\eqref{tts:eq:bellman-relaxed-det-renewal-modified},
  we can replace the function $\coutint_{d}^{\mathrm{R}}$
  by the function $\tilde{\coutint}_{d}^{\mathrm{R}}$ with
  \begin{align*}
    \tilde{\coutint}_{d}^{\mathrm{R}}\np{s_{d},h_{d},c_{d},s_{d+1},h_{d+1}}
    &= \coutint_{d}^{\mathrm{R}}\np{s_{d},h_{d},c_{d},s_{d+1},h_{d+1}}
      + \delta_{\nc{0,\fhealth(c_{d}) \, c_{d}}}\np{h_{d}}
    \\
    &+ \delta_{h_{d+1} \le h_d}\np{h_{d},h_{d+1}}
    + \delta_{\nc{0,\fhealth(c_{d}) \, c_{d}}}\np{h_{d+1}}
    \eqfinv
  \end{align*}
  where for any subset \( \Uncertain \subset \RR \),
    $\delta_{\Uncertain} : \RR \to \barRR $ denotes the \emph{indicator function} of the
    set~$\Uncertain$:
    \( \delta_{\Uncertain}\np{\uncertain} = 0 \) if \( \uncertain \in \Uncertain \),
    and \( \delta_{\Uncertain}\np{\uncertain} = +\infty \)
    if \( \uncertain \not\in \Uncertain \).
  Indeed, the last term \( \delta_{\nc{0,\fhealth(c_{d}) \, c_{d}}}(h_{d+1}) \) is obtained by moving
  Constraint~\eqref{tts:eq:bellman-relaxed-det-renewal-modified-boundX} to the minimized cost
  $\coutint_{d}^{\mathrm{R}}$ and
  the two other terms can be added as it is easily seen that
  $\coutint_{d}^{\mathrm{R}}(s_{d},h_{d},c_{d},s_{d+1},h_{d+1})=+\infty$ when $h_{d+1} > h_d$ or when
  $h_d \not\in \nc{0,\fhealth(c_{d}) \, c_{d}}$.
  Now, it is straightforward to prove that
  \begin{equation}
    \label{eq:LR-depends-on-health-delta}
    \tilde{\coutint}_{d}^{\mathrm{R}}(s_{d},h_{d},c_{d},s_{d+1},h_{d+1})
    = \tilde{\coutint}_{d}^{\mathrm{R}}(s_{d},h_{d}{-}h_{d+1},c_{d},s_{d+1},0)
    \eqfinp
  \end{equation}
  Indeed, as the health dynamics is linear nonincreasing, any admissible control
  for the Problem~\eqref{tts:eq:intrapbrelaxed-renewal-modified} for the ordered
  pair $(h_{d},h_{d+1})$, with $h_{d+1} \le h_d$ and
  $(h_d, h_{d+1}) \in  \bc{0,\fhealth(c_{d}) \, c_{d}}^2$ is also admissible for the ordered
  pair $(h_d- h_{d+1},0)$ and conversely. Moreover, the resulting cost is the same
  since the cost does not depend on the health variable. We thus obtain
  Equation~\eqref{eq:LR-depends-on-health-delta}.
}
\subsubsubsection{Resource intraday function approximation}

As suggested in~\cite{KautEA12}, we decide to neglect the state
of charge target at the slow time scale. Indeed, the operation
of the battery is daily periodic and such that it is more or less
empty at the beginning (and thus at the end) of a slow time step
(day). It is thus reasonable to assume that the battery is empty
at the beginning and at the end of every slow time step, which
is a pessimistic but rather realistic assumption. Combined with
Equation~\eqref{eq:LR-depends-on-health-delta}, we obtain a new
function~$\widehat{\coutint}_{d}^{\mathrm{R}}$ approximating
the original function~$\coutint_{d}^{\mathrm{R}}$, that is
\begin{equation}
  \label{eq:LR-depends-on-health-delta-on-no-s}
  \widehat{\coutint}_{d}^{\mathrm{R}}(h_{d}{-}h_{d+1},c_{d})
  = \tilde{\coutint}_{d}^{\mathrm{R}}(0,h_{d}{-}h_{d+1},c_{d},0,0)
  \approx \tilde{\coutint}_{d}^{\mathrm{R}}(s_{d},h_{d},c_{d},s_{d+1},h_{d+1})
  \eqfinp
\end{equation}
The approximated intraday function~$\widehat{\coutint}_{d}^{\mathrm{R}}$
now only depends on two variables, which significantly reduces
the time needed to compute it. Then, the sequence
$\{\overline{V}_{d}^{\mathrm{R}}\}_{d\in\longset\cup\{\longfinal\}}$
of Bellman value functions
in~\eqref{tts:eq:bellman-relaxed-det-renewal-modified} is approximated
by the sequence
$\{\widehat{\overline{V}}_{d}^{\mathrm{R}}\}_{d\in\longset\cup\{\longfinal\}}$
given by the following recursion:
\begin{subequations}
  \label{eq:fully-simplified-resource-bellman-equation}
  \begin{align}
  \widehat{\overline{V}}_{d}^{\mathrm{R}}(h_{d},c_{d}) =
  \inf_{h_{d+1},\va{r}_{d}}
    & \EE \Big[ \gamma
      \bp{ \widehat{\coutint}_{d}^{\mathrm{R}}(h_{d}{-}h_{d+1},c_{d})
      + \va{p}_{d}^{b} \va{r}_{d} }
      + \widehat{\overline{V}}_{d+1}^{\mathrm{R}}
        \bp{\psi^{\mathrm{H,C}}\np{h_{d+1},c_{d},\va{r}_{d}}} \Big]
      \eqfinv \\
  \text{s.t.} \quad
    & h_{d+1} \in \bc{0,\fhealth(c_{d}) \, c_{d}} \eqfinv \\
    & \va{r}_{d} \in \nc{0,\overline{R}} \eqfinv \\
    & \sigma(\va{r}_{d}) \subset \sigma\bp{\va{p}^{b}_{d}} \eqfinp
  \end{align}
  where~$\psi^{\mathrm{H,C}}$ is deduced from~$\psi$
  in~\eqref{tts:dynamics-detailed-last} by keeping only
  the last two dynamics~\eqref{tts:dynamics-detailed-last-H}
  and~\eqref{tts:dynamics-detailed-last-C}, which do not depend
  on the state of charge.
\end{subequations}

%\rouge{
%Then, the sequence
%$\{\overline{V}_{d}^{\mathrm{R}}\}_{d\in\longset\cup\{\longfinal\}}$
%of Bellman value functions
%is just computed for an empty battery ($s_d=0$) -- assuming that
%$\overline{V}_{d}^{\mathrm{R}}(s_{d},h_{d},c_{d}) =  \overline{V}_{d}^{\mathrm{R}}(0,h_{d},c_{d})$ --
%by the following recursion:
%%\begin{subequations}
%%  \label{xtts:eq:bellman-relaxed-det-renewal-modified}
%  \begin{multline}
%    \overline{V}_{d}^{\mathrm{R}}(0,h_{d},c_{d}) =
%    \inf_{h_{d+1},\va{r}_{d}}
%    \EE \Big[ \gamma \bp{
%    %\coutint_{d}^{\mathrm{R}}(0,h_{d}{-}h_{d+1},c_{d},0,0)
%    \widehat{\coutint}_{d}^{\mathrm{R}}(h_{d}{-}h_{d+1},c_{d})
%    + \va{p}_{d}^{b} \va{r}_{d} }
%    \\
%    + \overline{V}_{d+1}^{\mathrm{R}}
%      \bp{\psi\np{0,h_{d+1},c_{d},\va{r}_{d}}} \Big]
%    \eqfinv
%  \end{multline}
%  \begin{align}
%    \text{s.t.} \quad
%    & h_{d+1} \in \bc{0,\fhealth(c_{d}) \, c_{d}} \eqfinv \\
%    & \va{r}_{d} \in \nc{0,\overline{R}} \eqfinv \\
%    & \sigma(\va{r}_{d}) \subset \sigma\bp{\va{p}^{b}_{d}} \eqfinp
%  \end{align}
%%\end{subequations}
%In the sequel, we use the notation
%\begin{align}
%  \label{eq:fully-simplified-resource-bellman-equation}
%  \widehat{\overline{V}}_{d}^{\mathrm{R}}(h_{d},c_{d}) =
%  \overline{V}_{d}^{\mathrm{R}}(0,h_{d},c_{d})
%  \eqfinp
%\end{align}
%}

As explained in~\S\ref{subsec:periodicity}, we consider
in this study~$I$ periodicity classes ($I=4$, that is,
one class for each season of the year), so that the
computation of the resource intraday problem is done only
for~$I$ different days denoted~$d_{1}$, \dots $d_{I}$.
The complexity of the associated resource decomposition algorithm
is sketched in Appendix~\ref{tts:ann-complexity}.

\subsubsection{Intraday problem associated with price decomposition}
\label{ssect:simpl-price}

As explained in \S\ref{Lower_bounds_of_the_value_functions},
the aim of the price decomposition algorithm is to compute,
for all slow time steps~$d\in\longset\cup\{\longfinal\}$,
lower bounds~$\underline{V}_{d}^{\mathrm{P}}$ of the Bellman
value functions associated with Problem~\eqref{eq:2tsmotivlong}.
%which can be put in the form of  Problem~\eqref{tts:eq:2tspb}.
These lower bounds are obtained by solving a collection
of intraday problems such as~\eqref{tts:eq:intrapbdual}
for each slow time step~$d\in\longset$, and then by solving
the Bellman recursion~\eqref{tts:eq:bellman-dualized-det}.
The intraday problems have a priori to be solved for every
$6$-tuple $(s_{d},h_{d},c_{d},p^{s}_{d+1},p^{h}_{d+1},p^{c}_{d+1})$,
that is, the state~$(s_{d},h_{d},c_{d})$ at the beginning of the
slow time step and the prices~$(p^{s}_{d+1},p^{h}_{d+1},p^{c}_{d+1})$
associated with the dualization of the equality dynamics equations.

\subsubsubsection{Price intraday function reduction}

As in the resource intraday function reduction, it is possible
to take the capacity dynamics, its associated control and bound
constraints as well as the cost term $\va{p}_{d}^{b} \va{r}_{d}$
out of the intraday problem and to take them into account in the
Bellman recursion, so that the intraday problem does not depend
on the price~$p^{c}_{d+1}$ associated with the capacity dynamics.
To achieve that, price decomposition does not deal with
the dynamics equation~\eqref{tts:dynamics-regrouped-last},
but with Equation~\eqref{tts:dynamics-regrouped-fast} for~$m=M$.
But another possible reduction occurs here: from the health
dynamics~\eqref{tts:dynamics-detailed-fast-H} summed over
the fast time steps of day~$d$, we derive the inequality
\begin{equation}
  h_{d} - \va{h}_{d,M+1} - \sum_{m=0}^{M}
  \bp{\va{u}_{d,m}^{+}+\va{u}_{d,m}^{-}} \geq 0 \eqfinp
\end{equation}
Following the framework of~\S\ref{Lower_bounds_of_the_value_functions},
we dualize it by incorporating on the one hand the terms
$\va{u}_{d,m}^{+}+\va{u}_{d,m}^{-}$ in the definition of the price
intraday function for~$m\in\ic{0,M}$, and on the other hand the term
$h_{d} - \va{h}_{d,M+1}$ in the computation of the Bellman functions.
Doing so, the intraday function~$\coutint_{d}^{\mathrm{P}}$ does not
depend anymore on the health~$h_{d}$, and is defined as
\begin{subequations}
  \label{tts:eq:intrapbdual-renewal-modified}
  \begin{align}
    \coutint_{d}^{\mathrm{P}}
    & (s_{d},c_{d},p^{s}_{d+1},p^{h}_{d+1})
      = \inf_{\va{u}_{d,0:M}} \EE \;
      \bigg[ \sum_{m=0}^{M}
            \Big( \pi^e_{d,m}
                 \max(0,\va{d}_{d,m}+\va{u}_{d,m}^{+}-\va{u}_{d,m}^{-})
            \Big.
      \bigg. \nonumber \\
    & \qquad\qquad\qquad\qquad\qquad\qquad\qquad\qquad\qquad
      - p^{h}_{d+1}\bp{\va{u}_{d,m}^{+}+\va{u}_{d,m}^{-}} \Big)
      + p^{s}_{d+1} \va{s}_{d,M+1} \bigg] \eqfinv
      \intertext{subject to, for all~$m\in\shortset$,}
    & \va{s}_{d,0} = s_{d} \eqsepv
      \va{s}_{d,m+1} = \va{s}_{d,m} +
      \rho^{\mathrm{c}} \va{u}_{d,m}^{+} - \rho^{\mathrm{d}} \va{u}_{d,m}^{-} \eqfinv \\
    & \va{u}_{d,m} \in \nc{\underline{U},\overline{U}} \eqsepv
      \va S_{d,m} \in \nc{0, \FractionOfTheCapacity c_{d}} \eqfinv \\
    & \sigma(\va{u}_{d,m}) \subset
      \sigma\bp{\va{d}_{d,0},\dots,\va{d}_{d,m}} \eqfinp
  \end{align}
\end{subequations}
The associated sequence of Bellman value functions
$\{\underline{V}_{d}^{\mathrm{P}}\}_{d\in\longset\cup\{\longfinal\}}$
is computed by the following recursion:
\begin{subequations}
  \label{tts:eq:bellman-dualized-det-renewal-modified}
  \begin{multline}
    \underline{V}_{d}^{\mathrm{P}}(s_{d},h_{d},c_{d}) =
    \sup_{(p^{s}_{d+1},p^{h}_{d+1})\leq 0}
    \bigg( \coutint_{d}^{\mathrm{P}}(s_{d},c_{d},p^{s}_{d+1},p^{h}_{d+1})
    + \inf_{s_{d,M+1},h_{d,M+1}} \; \inf_{\va{r}_{d}} \;
    \EE \Big[ \gamma \va{p}_{d}^{b} \va{r}_{d} \\
    \qquad\qquad
    - p^{s}_{d+1} s_{d,M+1}
    + \bp{ p^{h}_{d+1}\bp{h_{d}-h_{d,M+1}}
      + \underline{V}_{d+1}^{\mathrm{P}}
      (\va{s}_{d+1,0},\va{h}_{d+1,0},\va{c}_{d+1,0}) }
    \Big]
    \bigg)
    \eqfinv
  \end{multline}
  \begin{align}
    \text{subject to} \quad
    & (\va{s}_{d+1,0},\va{h}_{d+1,0},\va{c}_{d+1,0}) =
      \psi\bp{s_{d,M+1},h_{d,M+1},c_{d},\va{r}_{d}}
      \eqfinv \\
    & \va{r}_{d} \in \nc{0,\overline{R}} \eqsepv
      s_{d,M+1} \in \nc{0,\FractionOfTheCapacity c_{d}} \eqsepv
      h_{d,M+1} \in \bc{0,\fhealth(c_{d}) \, c_{d}}
      \eqfinv \\
    & \sigma(\va{r}_{d}) \subset \sigma\bp{\va{p}^{b}_{d}} \eqfinp
  \end{align}
\end{subequations}

\subsubsubsection{Price intraday function approximation}

As in the resource decomposition algorithm, it
is possible to consider that the state of charge of
the battery has no influence at the slow time scale.
Doing so, we obtain a new
function~$\widehat{\coutint}_{d}^{\mathrm{P}}$ approximating
the original function~$\coutint_{d}^{\mathrm{P}}$, that is
\begin{equation}
  \label{eq:LP-depends-on-c-p-no-s}
  \widehat{\coutint}_{d}^{\mathrm{p}}(c_{d},p^{h}_{d+1})
  = \coutint_{d}^{\mathrm{P}}(0,c_{d},0,p^{h}_{d+1})
  \approx \coutint_{d}^{\mathrm{P}}(s_{d},c_{d},p^{s}_{d+1},p^{h}_{d+1})
  \eqfinp
\end{equation}
The approximated price intraday function~$\widehat{\coutint}_{d}^{\mathrm{P}}$
only depends on the $2$-tuple~$(c_{d},p^{h}_{d+1})$, which
significantly reduces the time needed to compute it. Then, the sequence
$\{\underline{V}_{d}^{\mathrm{P}}\}_{d\in\longset\cup\{\longfinal\}}$
of Bellman value functions
in~\eqref{tts:eq:bellman-dualized-det-renewal-modified}
is approximated by the sequence
$\{\widehat{\underline{V}}_{d}^{\mathrm{P}}\}_{d\in\longset\cup\{\longfinal\}}$
given by the following recursion:
\begin{subequations}
  \label{eq:fully-simplified-price-bellman-equation}
  \begin{multline}
    \widehat{\underline{V}}_{d}^{\mathrm{P}}(h_{d},c_{d}) =
    \sup_{p^{h}_{d+1}\leq 0}
    \bigg( \widehat{\coutint}_{d}^{\mathrm{P}}(c_{d},p^{h}_{d+1})
    + \inf_{h_{d,M+1}} \; \inf_{\va{r}_{d}} \;
    \EE \Big[ \gamma \va{p}_{d}^{b} \va{r}_{d} \\
    \qquad\qquad\qquad
    + p^{h}_{d+1}\bp{h_{d}-h_{d,M+1}}
      + \widehat{\underline{V}}_{d+1}^{\mathrm{P}}
      \bp{\psi^{\mathrm{H},\mathrm{C}}\np{h_{d,M+1},c_{d},\va{r}_{d}}}
    \Big]
    \bigg)
    \eqfinv
  \end{multline}
  \begin{align}
    \text{subject to} \quad
    & h_{d,M+1} \in \bc{0,\fhealth(c_{d}) \, c_{d}}
      \eqfinv \\
    & \va{r}_{d} \in \nc{0,\overline{R}}
      \eqfinv \\
    & \sigma(\va{r}_{d}) \subset \sigma\bp{\va{p}^{b}_{d}} \eqfinp
  \end{align}
\end{subequations}
As explained in~\S\ref{subsec:periodicity}, we consider
in this study~$I$ periodicity classes ($I=4$), that is,
one class for each season of the year), so that
the computation of the price intraday problem is done only
for~$I$ different days denoted~$d_{1}$, \dots $d_{I}$.
The complexity of the associated price decomposition algorithm
is sketched in Appendix~\ref{tts:ann-complexity}.

\subsection{Experimental setup}
\label{tts:ssec:experiments-setup}

The data used in the application come from case studies provided
by a Schneider Electric industrial site,
equipped with solar panels and a battery, and submitted to
three sources of randomness --- namely, solar panels production, electrical
demand and prices of batteries per kWh.
We present hereby
the different parameters of the instance under consideration.
\begin{itemize}
  \item Horizon: $20$ years.
  \item Fast time step: $30$ minutes.
  \item Slow time step: $1$ day.
  \item Number of time steps: $350,\!400$ % \( \numprint{350000} \)
    $(= (24 \times 2)  \times (20 \times 365))$.
  \item Battery renewal capacity: between $0$ and $1,\!500$ kWh
  with a increment of~$100$~kWh.
  \item Periodicity class: $4$ classes, one per trimester of the year.
\end{itemize}

\subsubsubsection{Data to model the cost of batteries and electricity}

For the prices of batteries, we obtained a yearly forecast over~$20$ years from
Statista\footnote{https://www.statista.com/statistics/883118/global-lithium-ion-battery-pack-costs/}.
We added a Gaussian noise to generate synthetic random batteries prices scenarios.
We display in Figure~\ref{tts:fig:batcosts} the scenarios we generated.
\begin{figure}[hbtp]
  \centering
  \mbox{\includegraphics[width=0.8\linewidth]{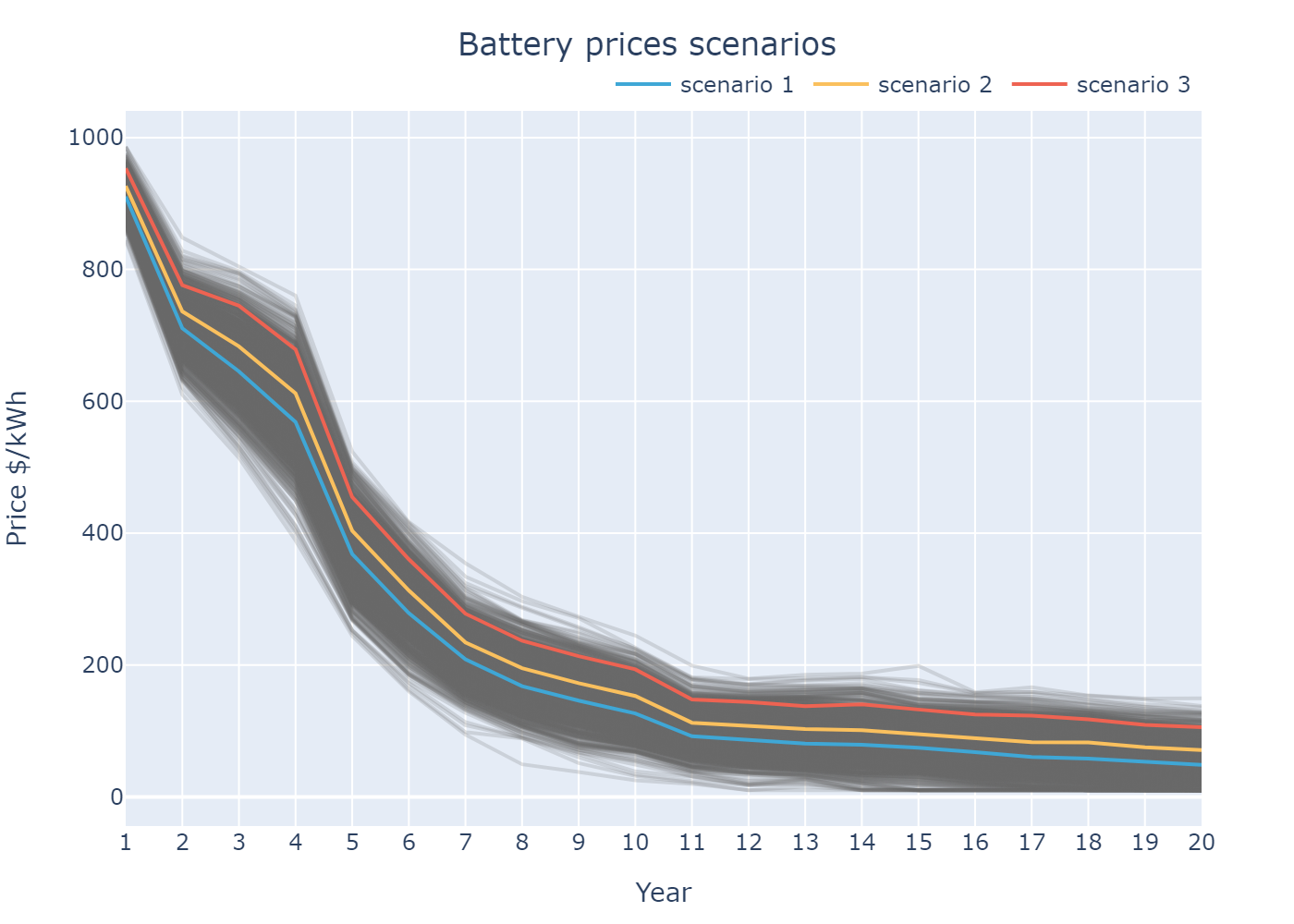}}
  \caption{Scenarios of battery prices over a twenty-year timespan}
  \label{tts:fig:batcosts}
\end{figure}
Three scenarios are highlighted in Figure~\ref{tts:fig:batcosts};
they correspond to the three scenarios we comment in the following
numerical results in~\S\ref{Numerical_experiments}.

For the price of electricity, we chose
a ``time of use'' tariff defined by three rates:
\begin{itemize}
\item an off-peak rate at $0.0255$\$ between 22:00 and 7:00,
\item a shoulder rate at $0.0644$\$ between 7:00 and 17:00,
\item a peak rate at $0.2485$\$ between 17:00 and 22:00.
\end{itemize}

\subsubsubsection{Data to model demand and production}

In order to have a realistic dataset in the model described
in~\S\ref{tts:ssec:experiments-formulation}, we use
the data collected on $70$~anonymized industrial sites
monitored by Schneider Electric. This data set is openly available.\footnote{https://zenodo.org/record/5510400}
We extracted the data of the site numbered~$70$.
For this site, we display in Figure~\ref{tts:fig:netloaddata}
the half hourly distribution of the netload
(demand minus solar production) during one day.
\begin{figure}[hbtp]
  \centering
  \includegraphics[width=0.8\linewidth]{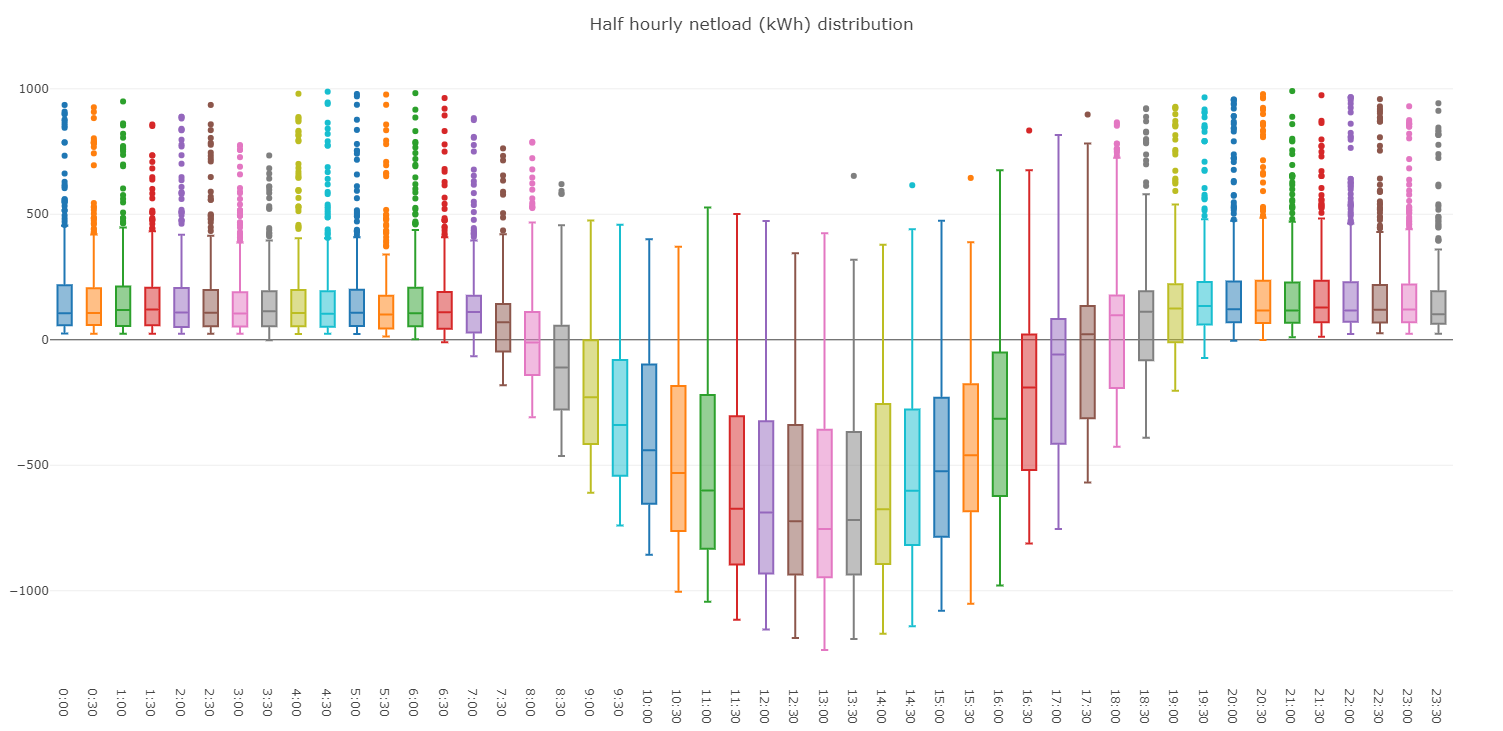}
  \caption{Daily half hourly distribution of netload (kWh)}
  \label{tts:fig:netloaddata}
\end{figure}

\begin{remark}
\label{rem:data_independence}
\textbf{(About the probabilistic independence of the data).}
Both batterie prices and netloads correspond to realistic
data that are given as scenarios, and there is a priori no
independence property for these data.
Of course, it is possible to compute probability laws from
these scenarios: at a given time step~$(d,m)$, collect all
the values~$\va{d}_{d,m}$ (the values~$(\va{d}_{d,M},\va{p}^{b}_{d})$
if~$m=M$) available from the scenarios and build a discrete
probability law from these values. This procedure gives
probability distributions at the half hourly scale,
corresponding to independent random variables.
This way of proceeding will be implemented
in~\S\ref{sect:computation_intraday_functions} to compute the
resource and price intraday functions by dynamic programming.
\end{remark}

\subsection{Numerical experiments}
\label{Numerical_experiments}

The aim of the numerical experiments is to compute and evaluate
policies induced by resource (resp. price) decomposition, that is,
solving an approximation of Problem~\eqref{eq:2tsmotivlong} by
computing the resource intraday functions~$\widehat{\coutint}_{d}^{\mathrm{R}}$
in~\eqref{eq:LR-depends-on-health-delta-on-no-s} and the associated
resource Bellman value functions~$\widehat{\overline{V}}_{d}^{\mathrm{R}}$
in~\eqref{eq:fully-simplified-resource-bellman-equation}
(resp. price intraday functions~$\widehat{\coutint}_{d}^{\mathrm{P}}$
in~\eqref{eq:LP-depends-on-c-p-no-s}
and price Bellman value functions~$\widehat{\underline{V}}_{d}^{\mathrm{P}}$
in~\eqref{eq:fully-simplified-price-bellman-equation})
after all simplifications presented in \S\ref{ssect:simpl-ressource}
(resp. \S\ref{ssect:simpl-price}).

\begin{comment}
These two policies take into account the ``aging control'' (AC)
of the battery and we call them respectively \emph{Resource-AC}
and \emph{Price-AC}.

In order to compare the results obtained by these two policies,
we introduce a variant of the battery management problem, in which
the health of the battery at the end of each day is not controlled.
This variant is called ``without aging control'' (WAC),
and it induces a \emph{Resource-WAC} policy where intraday
functions~\eqref{tts:eq:intrapbrelaxed-renewal-modified} are
computed with an health target~$h_{d+1}$ at the end of the day
equal to~$0$, and a \emph{Price-WAC} policy where intraday
functions~\eqref{tts:eq:intrapbdual-renewal-modified} are computed
with a price~$p^{h}_{d+1}$ on the health battery equal to~$0$
(see \S\ref{ssec:WAC} for further details).
We also compute by simulation a reference cost corresponding
to the case where the house is not equipped with a battery
(\emph{NOB}), that is the electricity bill of the house along
a scenario.
\end{comment}

\subsubsection{Computation of the resource and price intraday functions}
\label{sect:computation_intraday_functions}

To compute the approximated resource intraday
functions~$\widehat{\coutint}_{d}^{\mathrm{R}}$ as in Equation~\eqref{eq:LR-depends-on-health-delta-on-no-s}
and the approximated price intraday functions
$\widehat{\coutint}_{d}^{\mathrm{P}}$ in
Equation~\eqref{eq:LP-depends-on-c-p-no-s}, we assume
stagewise independence of the noises at the fast time scale
as detailed in Remark~\ref{rem:data_independence}
and we apply the dynamic programming algorithm.
Indeed, computing the intraday functions using stochastic
programming would be very costly due to the large number
of fast time steps inside a slow time step: for example computing
a price intraday function~\eqref{eq:LP-depends-on-c-p-no-s} would
require forming a scenario tree over~$48$ time steps for every
possible (discretized) value of the pair~$(c_{d},p^{h}_{d+1})$
and solving the associated optimization problem, i.e. a task
too expensive in computation time.
We recall that the intraday functions
are not computed for all possible days in the time horizon,
but only for a day representing each periodicity class.
Here we split the year of the industrial site data into
the four traditional trimesters. For each trimester, we model
the netload at a given half hour of the day by a discrete random
variable with a support of size $10$. The probability distribution
of each discrete random variable is obtained by clustering,
using k-means algorithm, the %$3 \times 30 \times 48$
netload realizations in the dataset associated
with the half hour under consideration.

In the case of resource (resp. price) decomposition, we compute
the intraday functions~$\widehat{\coutint}_{d}^{\mathrm{R}}$
(resp. $\widehat{\coutint}_{d}^{\mathrm{P}}$) for every possible
capacity~$c_d$  and every possible exchangeable
energy~$h_{d}-h_{d+1}$ (resp. every possible price~$p^{h}_{d+1}$).
In this study, the possible values of the capacity $c_{d}$
are~$\{0,100\ldots,1500\}$ kWh, whereas the possible values
of the price $p^{h}_{d+1}$ are~$\{0,0.05,0.1\ldots,0.2\}$.

We display in Figure~\ref{tts:fig:intraday1} the resource and price
intraday functions for each season (trimester) of each year.
Resource intraday functions depend on daily exchangeable energy
and capacity, whereas price intraday functions depend on the price
associated with aging and capacity.

\begin{figure}[hbtp]
  \centering
  \includegraphics[width=0.8\linewidth]{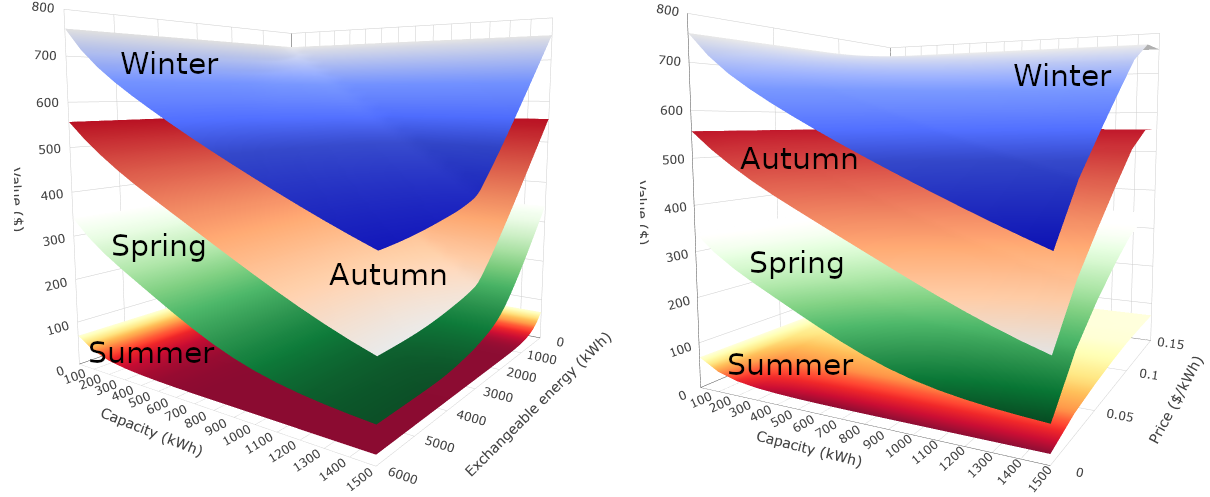}
  \caption{Resource (left) and price (right) intraday functions
           for each trimester}
  \label{tts:fig:intraday1}
\end{figure}

\subsubsection{Computation of the resource and price Bellman value functions}
\label{sect:computing_value_functions}

Once obtained all possible intraday functions
$\widehat{\coutint}_{d}^\mathrm{R}$ and
$\widehat{\coutint}_{d}^\mathrm{P}$, the Bellman
value functions~$\widehat{\overline{V}}_{d}^{\mathrm{R}}$
and~$\widehat{\underline{V}}_{d}^{\mathrm{P}}$
are respectively computed by the Bellman
recursions~\eqref{eq:fully-simplified-resource-bellman-equation}
and~\eqref{eq:fully-simplified-price-bellman-equation},
for~$d\in\longset$.
We display in Figure~\ref{tts:fig:daily_value_function_1}
the resource and price Bellman value functions obtained for the first
day of the time horizon.
\begin{figure}[hbtp]
  \centering
  \mbox{\includegraphics[width=0.7\linewidth]{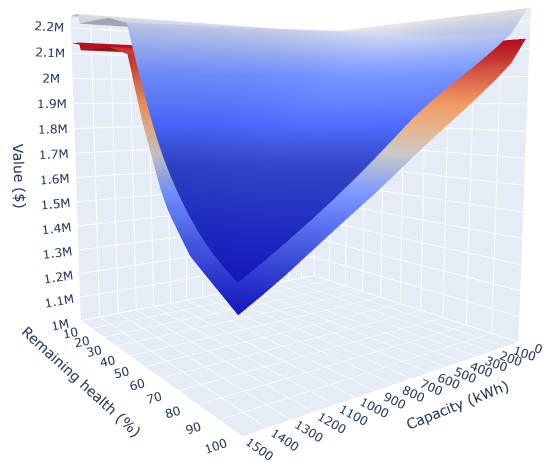}}
  \caption{Resource and price Bellman value functions at day~$1$}
  \label{tts:fig:daily_value_function_1}
\end{figure}
We observe that the resource and price Bellman value functions
present approximately the same shape and are just separated
by a relatively small gap.
The same observation holds true for all days of the horizon.
The largest relative gap between these bounds is 7.90\%.
The relative gap at the inital state (no battery), is around 4.84\%.

We gather in Table~\ref{tts:tab:comptimes_offline} the computing
times of the two decomposition algorithms, namely the total CPU
times and the times when parallelization is on (wall time).
The computation is run on an Intel i7-8850H CPU @ 2.60GHz
6 cores with 16 GB RAM.
%\pierre{J'ai l'impression que ce processeur a 6 c{\oe}urs, et non
%4 comme il \'{e}tait mentionn\'{e}\ldots}
The table displays the times needed to compute the intraday
functions and the Bellman value functions.
\begin{table}[hbtp]
  \centering
  \begin{tabular}{l|cc}
                               & ~~~~\textbf{{Price}}~~~~ & \textbf{{Resource}} \\
  \hline
  Intraday functions CPU time  & $1053$ s                 & $2836$ s            \\
  Value functions CPU time     & $6221$ s                 & $1515$ s            \\
  Total CPU time               & $7274$ s                 & $4351$ s            \\
  \hline
  Intraday functions wall time & $267$ s                  & $714$ s             \\
  Value functions wall time    & $2227$ s                 & $1310$ s            \\
  Total wall time              & $2494$ s                 & $2024$ s            \\
  \end{tabular}
  \caption{Computing times of decomposition methods}
\label{tts:tab:comptimes_offline}
\end{table}
We observe that, whereas the price decomposition algorithm requires
a significantly longer CPU time than the resource decomposition
algorithm, the two decomposition algorithms require a
comparable wall time\footnote{Wall time measures how much time
has passed for executing the code, as if you were looking at
the clock on your wall.} when parallelization is on.
The main reason is that the parallelization of the computation of
price Bellman value functions decreases more significantly the computing
time than the parallelization for resource Bellman value functions.
The explanation is that the computation done in parallel is longer
in the price case, hence the CPU time saved is not compensated by
too frequent memory sharings. The price intraday functions
are also faster to compute because the price space is more coarsely
discretized than the exchangeable energy space.

Finally, in Table~\ref{tts:tab:bounds}, we give the values~$\widehat{\overline{V}}_{0}^{\mathrm{R}}(x_{0})$
and~$\widehat{\underline{V}}_{0}^{\mathrm{P}}(x_{0})$
of the resource and price Bellman value functions at day~$d=0$
for the initial state $x_{0}=(s_{0},h_{0},c_{0})=(0,0,0)$, that is,
no battery present. According to Sect.~\ref{tts:sec:bounds},
these values are respectively an upper bound and a lower bound
of the optimal value of Problem~\eqref{eq:2tsmotivlong}.
Note that the numerically computed bounds may fail to be upper
and lower bounds since the resource and price intraday functions
are obtained
\begin{itemize}
\item using some approximation as explained
in~\S\ref{ssect:simpl-ressource} and~\S\ref{ssect:simpl-price},
\item assuming that the noises random variables are independent
at the half hourly time step.
\end{itemize}
\begin{table}[hbtp]
\centering
\begin{tabular}{l|cc}
                         & ~~~~\textbf{{Price}}~~~~ & \textbf{{Resource}} \\
\hline
Lower (price) and upper (resource) bounds  & $2.14$ M\$ & $2.24$ M\$
\end{tabular}
\caption{Bounds obtained by resource and price decomposition}
\label{tts:tab:bounds}
\end{table}

\subsubsection{Simulation of the resource and price policies}
\label{sect:simulating_policies}

We present now several simulation results.
Table~\ref{tts:tab:comptimes_online} displays the times needed
to perform a 20 years simulation over one scenario, from which
we deduce the average time needed to compute a decision at each
time step.
\begin{table}[hbtp]
  \centering
  \begin{tabular}{l|cc}
                               & ~~~~\textbf{{Price}}~~~~ & \textbf{{Resource}} \\
  \hline
  \hline
  Average time to simulate a scenario & $6.19$ s      & $5.22$ s                \\
  Average time to compute a decision  & $17.7$ $\mu$s & $14.9$ $\mu$s           \\
  \end{tabular}
  \caption{Computing times of simulation}
\label{tts:tab:comptimes_online}
\end{table}

\subsubsubsection{Simulation using white noise scenarios}

We draw~$100$ scenarios of battery prices and netloads over
20~years, by assuming that the noise random variables are
independent at the half hourly time step, as detailed in
Remark~\ref{rem:data_independence}.
These simulations are made with the same probabilistic assumptions
as those made to compute the resource and price Bellman value
functions, and therefore make it possible to verify the validity
of the bounds provided by these Bellman functions.
Then, as explained in~\S\ref{sec:computing_policies}, we simulate
the charge and renewal decisions that are made along these scenarios.
All simulations start from an initial state where no battery is present.
The average costs of these scenarios are given in
Table~\ref{tts:tab:cost_white_scenarios}.
\begin{table}[hbtp]
\centering
\begin{tabular}{l|cc}
                         & ~~~~\textbf{{Price}}~~~~ & \textbf{{Resource}} \\
\hline
Average cost over $100$ white noise scenarios & $2.18$ M\$ & $2.25$ M\$\\
\end{tabular}
  \caption{Average simulation costs using white noise scenarios}
\label{tts:tab:cost_white_scenarios}
\end{table}
These two values obtained by simulation being themselves statistical
upper bounds of the true optimal cost, they do not contradict the bound
values given in Table~\ref{tts:tab:bounds}.

\subsubsubsection{Simulation using ``true'' scenarios}

We draw~$1,000$ ``true'' scenarios of battery prices and netloads
over 20~years, that is, scenarios extracted from the realistic
data of the problem. There is thus no more independence assumption
available for these scenarios. Then, we simulate the charge and
renewal decisions that are made when using the intraday functions
and the Bellman value functions obtained by resource and price
decomposition, in order to compare the performances of both methods.
All simulations start from an initial state where no battery is present.
The average costs of these scenarios are given in
Table~\ref{tts:tab:cost_true_scenarios}.
\begin{table}[hbtp]
\centering
\begin{tabular}{l|cc}
                         & ~~~~\textbf{{Price}}~~~~ & \textbf{{Resource}} \\
\hline
Average cost over $1,000$ true scenarios & $2.83$ M\$ & $2.86$ M\$\\
\end{tabular}
  \caption{Average simulation costs using original scenarios}
\label{tts:tab:cost_true_scenarios}
\end{table}
The comparison of the average costs  shows that both decomposition
methods provide comparable performances. However, the price decomposition
outperforms the resource decomposition by achieving on average 1.05\%
of additional economic savings. This slightly superior performance
of the price decomposition is observed on every simulation scenario,
and reveals that the price Bellman value functions might be closer
to the true Bellman value functions.

We also note that the average costs are $20$\% to $25$\% higher
than the corresponding values of the Bellman value functions at the
initial day for the initial state given in Table~\ref{tts:tab:bounds}.
This is a somewhat surprising result since  the Bellman value
obtained by resource (resp. price) decomposition
is an upper bound (resp. lower ) of the optimal value of the problem.
One would therefore expect the average values obtained in simulation
to be between these bounds, which is not the case.
The contradiction comes from the fact that the bounds are computed
assuming that the prices of the batteries are independent day by day,
whereas the simulations are made with scenarios where the prices are
likely to be strongly correlated.

\subsubsubsection{Analysis of some scenarios}

We select three scenarios (the colored scenarios
in Figure~\ref{tts:fig:batcosts}) among the~$1,000$
scenarios of battery prices and netload over 20~years
that we used in the previous paragraph, and we analyse
the behavior of the policies induced by resource and
price decompositions. We recall that all simulations
start from an initial state where no battery is present.
Figure~\ref{tts:fig:health_simulations_bis} displays
the health (or exchangeable energy in kWh) of the batteries
at the end of each day for the three scenarios,
\begin{figure}[hbtp]
  \centering
  \includegraphics[width=0.9\linewidth]{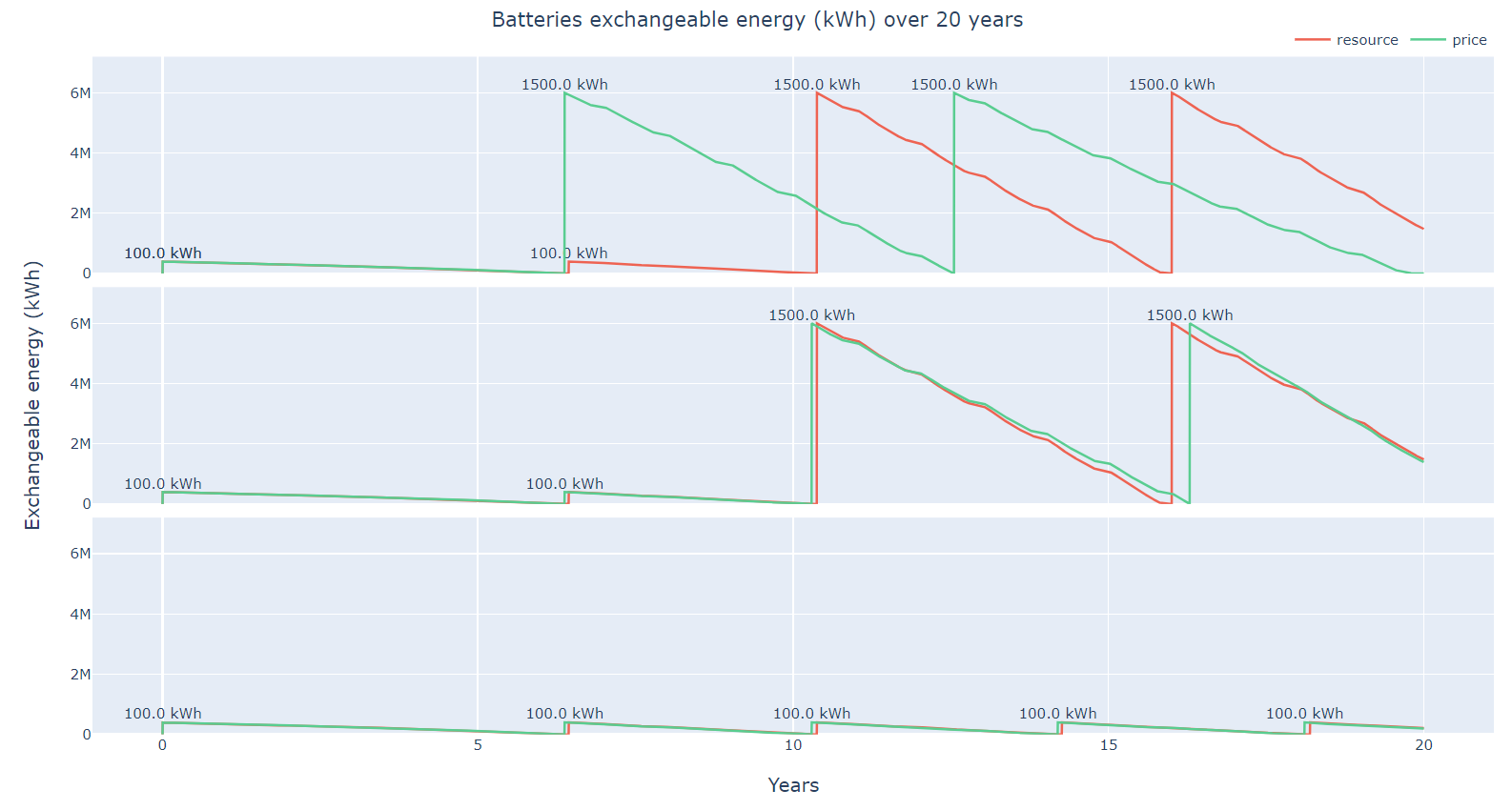}
  \caption{Three simulations of the evolution of the battery
           health over 20 years}
  \label{tts:fig:health_simulations_bis}
\end{figure}
%Figure~\ref{tts:fig:health_simulations}
%(resp. Figure~\ref{tts:fig:health_simulation_3}) displays
%the health (or exchangeable energy in kWh) of the batteries
%at the end of each day for the first two (resp. for the third)
%scenarios,
%\begin{figure}[hbtp]
%  \centering
%  \includegraphics[width=0.9\linewidth]{health_simulations_1}
%  \caption{First two simulations of the evolution of the battery
%           health over 20 years}
%  \label{tts:fig:health_simulations}
%\end{figure}
%\begin{figure}[hbtp]
%  \centering
%  \includegraphics[width=0.88\linewidth]{health_simulations_2}
%  \caption{Third simulation of the evolution of the battery health
%           over 20 years}
%  \label{tts:fig:health_simulation_3}
%\end{figure}
and Table~\ref{tts:tab:simresults} gives the associated simulation costs.
In the first simulation, price and resource decompositions
lead to significantly different renewal decisions and different costs.
A small battery, ($100$ kWh, that is, $400$ kWh of exchangeable
energy\footnote{The integer function~$\fhealth$ in
Equation~\eqref{tts:dynamics-detailed-last-H} is such
that~$\fhealth(100) = \fhealth(1500) = 4$.})
is purchased at day~$d=0$ for both price and resource
decomposition. But at day~$d=2328$, another small battery
is purchased in resource decomposition, whereas a large battery
($1,500$ kWh, that is, $6$ MWh of exchangeable energy) is purchased
in price decomposition. Then, over the remaining time horizon, there
is one battery renewal in price decomposition, and two renewals
in resource decomposition.
In the second simulation, resource and price decompositions produce
very similar health trajectories and costs. This is even clearer for
the third simulation for which the health trajectories and the costs
are almost identical. The third simulation shows a case where battery
prices are high, hence only small batteries, that is, $100$ kWh, are
purchased.
\begin{table}[hbtp]
  \centering
  \begin{tabular}{l|ccc}
    & \textbf{{Scenario 1}} & \textbf{{Scenario 2}} & \textbf{{Scenario 3}} \\
    \hline
    Total cost (resource)
    & $2.757$ M\$           & $2.825$ M\$    &    $3.200$ M\$  \\
    Total cost (price)
    & $2.722$ M\$           & $2.820$ M\$    &    $3.199$ M\$   \\
  \end{tabular}
  \caption{Simulation results along three scenarios}
\label{tts:tab:simresults}
\end{table}
Price decomposition outperforms resource decomposition on the three
scenarios, but only by $1.27$ \% on Scenario~$1$ while the renewal
decisions are significantly different. Our interpretation is that,
in Scenario~$1$, it is almost as rewarding to buy either a big battery
or a small battery taking into account the investment. Moreover,
it seems that resource decomposition slightly underestimates
the benefits of using a large battery compared to a small one.
Indeed, we present in Figure~\ref{fig:tts:value_function_2328}
the resource and price Bellman value functions of the day~$d=2328$
(first battery renewal in Scenario~$1$), when the health
of the battery is fixed to the value~$\overline{H}$ associated
with a large battery renewal (1,500 kWh), that is,
$\widehat{\overline{V}}_{2328}^{\mathrm{R}}(\overline{H},\cdot)$
in~\eqref{eq:fully-simplified-resource-bellman-equation} and
$\widehat{\underline{V}}_{2328}^{\mathrm{P}}(\overline{H},\cdot)$
in~\eqref{eq:fully-simplified-price-bellman-equation}.
We observe that the resource Bellman value function is
significantly higher than the price Bellman value function.

\begin{figure}[hbtp]
  \centering
  \includegraphics[width=0.8\linewidth]{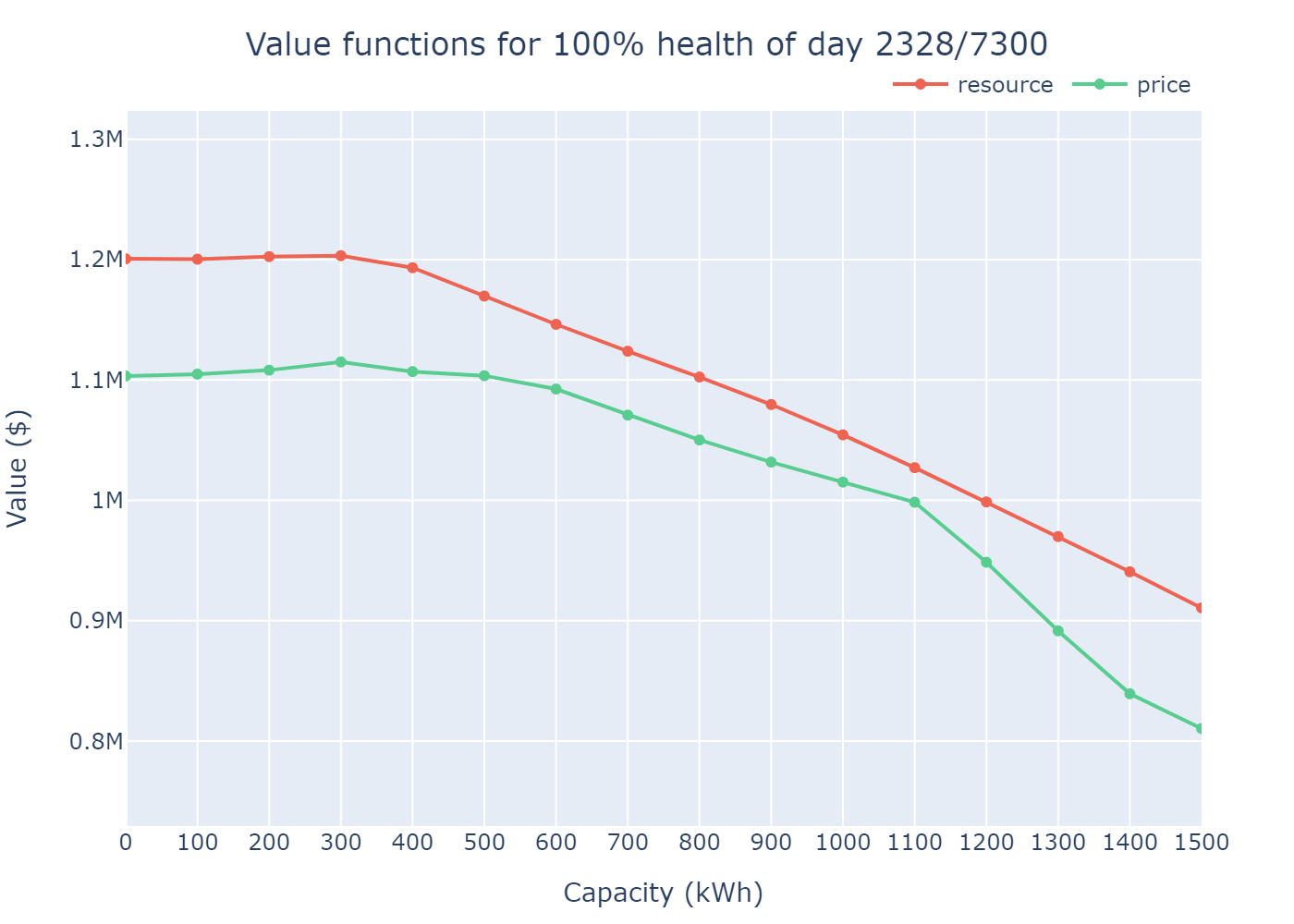}
  \caption{Bellman value function of first renewal day of Scenario~1
           for health fixed at $100$ \%}
\label{fig:tts:value_function_2328}
\end{figure}

\section{Conclusion}

We introduced the formal definition of a two-time-scale stochastic
optimization problem. The motivation for two-time-scale modeling
originated from a battery management problem over a long term horizon
($20$ years) with decisions being made every $30$~minutes
(charge/discharge). We presented two algorithmic methods to compute
daily Bellman value functions to solve these generic problems --- with
an important number of time steps and decisions on different time
scales --- when they display monotonicity properties. Both methods rely
on a Bellman equation applied at the slow time scale, producing
Bellman value functions at this scale.

Our first method, called resource decomposition algorithm, is a primal
decomposition of the daily Bellman equation that requires to compute
the value of a multistage stochastic optimization problem parameterized
by a stochastic resource. The monotonicity property here makes it possible
to relax the coupling constraint and to replace the stochastic resource by
a deterministic one, yielding an upper bound for the daily Bellman value
functions. Instead of this simplification, we could have turned the almost
sure coupling constraint into a constraint in expectation. It would be
interesting to compare this with our approach.

We address a similar and related difficulty in our price decomposition
algorithm. It requires the computation of the value of a stochastic
optimization problem parameterized by a stochastic price. Once again
we replace it by a deterministic price, which is equivalent to dualize
an expectation target constraint.
This makes the previous enhancement proposal even more relevant. Still
our algorithm produces a lower bound that reveals itself tight
in our numerical experiment (we already have observed this numerical
favorable phenomenon in \cite{Carpentier-Chancelier-DeLara-Pacaud:2020}).

Finally we proved with a realistic numerical application that
these methods make it possible to compute design and control policies
for problems with a very large number of timesteps.
But they could also be used for single timescale problems
that exhibit monotonicity, periodicity and a large number of time steps.

\appendix

\section{Complexity of the decomposition algorithms}
\label{tts:ann-complexity}

We compute the complexity of the resource and price decomposition
algorithms~\ref{tts:alg:2tssdp} and~\ref{tts:alg:2tssdpdual}
in terms of number of operations required to implement them,
and we compare them to a brute force use of \DP.

We denote by~$N_{x}^{\mathrm{s}}$ (resp.~$N_{u}^{\mathrm{s}}$
and~$N_{w}^{\mathrm{s}}$) the dimension of the space of state
(resp. control and noise) variables that change only
at the slow time scale.
We denote by~$N_{x}^{\mathrm{sf}}$ (resp.~$N_{x}^{\mathrm{ff}}$)
the dimension of the space of state variables that change at the
fast time scale with an influence at the slow time scale (resp.
without influence at the slow time scale), and by~$N_{u}^{\mathrm{f}}$
(resp.~$N_{w}^{\mathrm{f}}$) the dimension of the space of control
(resp. noise) noise variables that change at the fast time scale.
In numerical applications, we stick to the battery problem under
consideration in the paper:
\begin{itemize}
\item $N_{x}^{\mathrm{s}}=1$ (capacity $\va{c}_{d}$),
  $N_{x}^{\mathrm{sf}}=1$ (health $\va{h}_{d,m}$),
  $N_{x}^{\mathrm{ff}}=1$ (state of charge $\va{s}_{d,m}$),
\item $N_{u}^{\mathrm{s}}=1$ (renewal $\va{r}_{d}$),
  $N_{u}^{\mathrm{f}}=1$ (charge/discharge $\va{u}_{d,m}$)
\item $N_{w}^{\mathrm{s}}=1$ (price $\va{p}_{d}^{b}$),
  $N_{w}^{\mathrm{f}}=1$ (net demand $\va{d}_{d,m}$).
\end{itemize}
As detailed in Remark~\ref{rem:data_independence}, we
assume here stagewise independence of noises at the fast
time scale, so that all intraday problems can be solved by \DP.
To make things simple, we assume that each one-dimensional variable
is discretized in~$\discone$ values, and that each elementary
minimization is conducted by exhaustive search in the control space.
We finally assume that the number~$D+1$ of slow time steps and
the number~$M+1$ of fast time steps in a slow time step are rather
large, whereas the number~$I$ of periodicity classes is rather small.

We first compute the complexity of solving
Problem~\eqref{eq:2tsmotivlong} by \DP. Taking into account
that the dimension of the state at each time step
is~$N_{x}^{\mathrm{ff}}+N_{x}^{\mathrm{sf}}+N_{x}^{\mathrm{s}}$
and that the last fictitious time step at the fast time scale only
involves the control~$\va{r}_{d}$ and the  noise~$\va{p}_{d}^{b}$,
whereas the~$M+1$ previous steps involve the control
$(\va{u}_{d,m}^{+},\va{u}_{d,m}^{+})$ and the noise~$\va{d}_{d,m}$,
the number of elementary operations required to solve the problem is:
\begin{align}
  \label{tts:complex-DP}
  & (D+1)
    \Bp{ \discone^{N_{x}^{\mathrm{ff}}+N_{x}^{\mathrm{sf}}+N_{x}^{\mathrm{s}}+
    N_{u}^{\mathrm{s}}+N_{w}^{\mathrm{s}}} + (M+1)
    \bp{ \discone^{N_{x}^{\mathrm{ff}}+N_{x}^{\mathrm{sf}}+N_{x}^{\mathrm{s}}+
    N_{u}^{\mathrm{f}}+N_{w}^{\mathrm{f}}} } } \\
  & \approx \; D \bp{ \discone^{5} + M \discone^{5}} \nonumber \\
  & \approx \; D  M \discone^{5} \nonumber \eqfinp
\end{align}

\subsection{Resource decomposition algorithm}
\label{tts:ann-resource}

We now compute the complexity of the resource decomposition
algorithm, which depends on the complexity of the intraday
problem numerical solving, and on the way the Bellman equation is solved.

\begin{itemize}
\item We first have to solve the intraday problem, that is,
  we compute the optimal value of the intraday problem for each possible
  value of the initial state~$(s_{d},h_{d},c_{d})$ and each possible
  value of the final resource target~$(s_{d+1},h_{d+1},c_{d+1})$. We
  apply the simplifications introduced at~\S\ref{ssect:simpl-ressource}.
  \begin{itemize}
  \item A first simplification arises when separating the slow and
    fast time scales, that is, the slow components of the problem (state,
    control and noise) are no more taken into account in the computation
    of the optimal value of the intraday problem, which leads to
    Problem~\eqref{tts:eq:intrapbrelaxed-renewal-modified}.
  \item A second simplification is based on the assumption that
    the fast component of the state does not influence the slow
    dynamics and thus can be set to zero at the beginning and
    at the end of each slow time step.
  \item The last simplification is that the modeling of the problem
    is such that it only depends on the health difference during
    a slow time step.
  \end{itemize}

  The resulting intraday problem
  $\widehat{\coutint}_{d}^{\mathrm{R}}(\Delta h_{d},c_{d})
   \defegal \coutint_{d}^{\mathrm{R}}(0,\Delta h_{d},c_{d},0,0)$
  in Equation~\eqref{eq:LR-depends-on-health-delta-on-no-s}
  is solved by \DP\ involving all fast components of the problem,
  and has to be computed for all possible values of the capacity~$c_{d}$
  (the health difference~$\Delta h_{d}$ is part of the initial state
  of the problem and thus the associated complexity is taken into
  account by \DP). The complexity of computing the value
  $\widehat{\coutint}_{d}^{\mathrm{R}}$ for a given value of~$c_{d}$
  is $(M+1) \discone^{N_{x}^{\mathrm{ff}}+N_{x}^{\mathrm{sf}}+
       N_{u}^{\mathrm{f}}+N_{w}^{\mathrm{f}}}$, and the whole
  complexity of computing all values of all intraday functions is
  \begin{equation}
    \label{tts:complex-intraday-resource}
    I (M+1) \discone^{N_{x}^{\mathrm{s}}}
    \discone^{N_{x}^{\mathrm{ff}}+N_{x}^{\mathrm{sf}}+
      N_{u}^{\mathrm{f}}+N_{w}^{\mathrm{f}}}
    \; \approx \; I M \discone^{5} \eqfinv
  \end{equation}
  where we recall that~$I$ is the number of classes of periodicity
  of the problem.
\item Having at disposal all possible
  functions~$\widehat{\coutint}_{d}^{\mathrm{R}}$, we compute the Bellman
  value functions~$\widehat{\overline{V}}_{d}^{\mathrm{R}}$ given by
  Equation~\eqref{eq:fully-simplified-resource-bellman-equation}.
  The complexity associated with that recursion is
  \begin{equation}
    \label{tts:complex-recursion-resource}
    (D+1) \discone^{N_{x}^{\mathrm{sf}}+N_{x}^{\mathrm{s}}+
      N_{x}^{\mathrm{sf}}+N_{u}^{\mathrm{s}}+N_{w}^{\mathrm{s}}}
    \; \approx \; D \discone^{5} \eqfinp
  \end{equation}
\end{itemize}

We are now able to compare the complexity
\eqref{tts:complex-intraday-resource}--\eqref{tts:complex-recursion-resource}
with the complexity~\eqref{tts:complex-DP} of brute force \DP.
The resource decomposition algorithm is relevant if the following
ratio is small
\begin{equation*}
  \mathfrak{R}^{\mathrm{R}} =
  \frac{I M \discone^{5} + D \discone^{5}}
  {D M \discone^{5}} \ll 1 \eqfinv
\end{equation*}
or equivalently
\begin{equation}
  \label{eq:relevant-resource}
  \mathfrak{R}^{\mathrm{R}} =
  \frac{I}{D} + \frac{1}{M} \ll 1 \eqfinp
\end{equation}
Let us consider the relevance condition~\eqref{eq:relevant-resource}
in three different situations, with a number of periodicity classes
equal to~$4$.
\begin{enumerate}
\item \emph{Horizon: $20$ years, fast time step: $1/2$ hour}, slow time step: $1$ day. \\
  Then, we have~$(D;M)=(7,\!300;48)$ and the ratio in~\eqref{eq:relevant-resource}
  is $\mathfrak{R}^{\mathrm{R}} \approx 1/50$.
\item \emph{Horizon: $20$ years, fast time step: $1/2$ hour}, slow time step: $1$ week. \\
  Then, we have~$(D;M)=(1,\!040;336)$ and the ratio in~\eqref{eq:relevant-resource}
  is $\mathfrak{R}^{\mathrm{R}} \approx 1/150$.
\item \emph{Horizon: $20$ years, fast time step: $1/2$ hour}, slow time step: $1$ month. \\
  Then, we have~$(D;M)=(240;1,\!440)$ and the ratio in~\eqref{eq:relevant-resource}
  is $\mathfrak{R}^{\mathrm{R}} \approx 1/60$.
\end{enumerate}
This illustrates that the ratio~$\mathfrak{R}^{\mathrm{R}}$
is minimal when the two quantities~$D$ and~$IM$ are of the same
order of magnitude,\footnote{The solution~$(D\opt,M\opt)$ of the
  optimization problem~$\min_{D,M} \frac{I}{D} + \frac{1}{M}$
  subject to~$D M = \alpha$ is such that~$D\opt = I M\opt$.}
in which case the condition~$I \ll D$ ensures large computer time savings.

In conclusion, the fact that the resource decomposition algorithm
is such that the intraday functions can be computed offline and
that the number of these functions is much less than the number
of slow time steps makes the method very appealing from the
computer time point of view.

\begin{remark}
  It is interesting to compare this complexity result with the
  one obtained by only separating the slow and fast time scales,
  that is, when implementing the resource decomposition algorithm
  by computing all possible values of the intraday problem
  \eqref{tts:eq:intrapbrelaxed-renewal-modified} and then
  by computing the Bellman value functions using
  \eqref{tts:eq:bellman-relaxed-det-renewal-modified}.
  The computation by \DP\ of
  $\coutint_{d}^{\mathrm{R}}(s_{d},h_{d},c_{d},s_{d+1},h_{d+1})$
  for a given $3$-tuple $(c_{d},s_{d+1},h_{d+1})$ requires
  $(M+1) \discone^{N_{x}^{\mathrm{ff}}+N_{x}^{\mathrm{sf}}+
    N_{u}^{\mathrm{f}}+N_{w}^{\mathrm{f}}}$,
  so that the whole complexity of computing the values of
  all intraday functions is
  \begin{equation*}
    I (M+1) \discone^{N_{x}^{\mathrm{s}}+N_{x}^{\mathrm{ff}}+N_{x}^{\mathrm{sf}}}
    \discone^{N_{x}^{\mathrm{ff}}+N_{x}^{\mathrm{sf}}+
      N_{u}^{\mathrm{f}}+N_{w}^{\mathrm{f}}} \\
    \; \approx \; I M \discone^{7} \eqfinp
  \end{equation*}
  The complexity of the Bellman recursion
  \eqref{tts:eq:bellman-relaxed-det-renewal-modified} is
  \begin{equation*}
    (D+1)
    \discone^{N_{x}^{\mathrm{ff}}+N_{x}^{\mathrm{sf}}+
      N_{x}^{\mathrm{s}}+N_{x}^{\mathrm{ff}}+N_{x}^{\mathrm{sf}}+
      N_{u}^{\mathrm{s}}+N_{w}^{\mathrm{s}}} \\
    \; \approx \; D \discone^{7} \eqfinv
  \end{equation*}
  so that Condition~\eqref{eq:relevant-resource} for the method
  to be interesting becomes
  \begin{equation*}
    \discone^{2} \Bp{\frac{I}{D} + \frac{1}{M}} \ll 1 \eqfinv
  \end{equation*}
  a less favorable condition than the one obtained
  where simplifications are taken into account.
\end{remark}

\subsection{Price decomposition algorithm}
\label{tts:ann-price}

We consider now the computation of the intraday problem arising
from the price decomposition algorithm and the associated Bellman
recursion. Taking into account all  simplifications introduced
at~\S\ref{ssect:simpl-price} and developed at~\S\ref{ssect:simpl-price},
the resulting intraday function
$\widehat{\coutint}_{d}^{\mathrm{P}}(c_{d},p^{h}_{d+1})
 \defegal \coutint_{d}^{\mathrm{P}}(0,c_{d},0,p^{h}_{d+1})$
given by~\eqref{tts:eq:intrapbdual-renewal-modified} is solved
by \DP\ involving all fast components of the problem, and has
to be computed for all possible values of the capacity~$c_{d}$
and of the multiplier~$p^{h}_{d+1}$. The complexity of computing the
value $\widehat{\coutint}_{d}^{\mathrm{P}}(c_{d},p^{h}_{d+1})$
for a given couple $(c_{d},p^{h}_{d+1})$ is
$(M+1) \discone^{N_{x}^{\mathrm{ff}}+N_{u}^{\mathrm{f}}+N_{w}^{\mathrm{f}}}$,
and the whole complexity of computing all values of all intraday
functions is
\begin{equation}
  \label{tts:complex-intraday-price}
  I (M+1) \discone^{N_{x}^{\mathrm{s}}+N_{x}^{\mathrm{sf}}}
  \discone^{N_{x}^{\mathrm{ff}}+N_{u}^{\mathrm{f}}+N_{w}^{\mathrm{f}}}
  \; \approx \; I M \discone^{5} \eqfinp
\end{equation}
Then we compute the Bellman value function
$\widehat{\underline{V}}_{d}^{\mathrm{P}}(h_{d},c_{d})$
given by Equation~\eqref{eq:fully-simplified-price-bellman-equation}.
The complexity associated with the Bellman recursion is
\begin{equation}
  \label{tts:complex-recursion-price}
  (D+1) \discone^{N_{x}^{\mathrm{sf}}+N_{x}^{\mathrm{s}}+N_{x}^{\mathrm{sf}}+
    N_{x}^{\mathrm{sf}}+N_{u}^{\mathrm{s}}+N_{w}^{\mathrm{s}}}
  \; \approx \; D \discone^{6} \eqfinp
\end{equation}
Comparing the whole complexity of the price decomposition algorithm
with the complexity~\eqref{tts:complex-DP} of \DP, we conclude that
the resource decomposition algorithm is relevant if the following
ratio is small
\begin{equation*}
  \mathfrak{R}^{\mathrm{P}} =
  \frac{I M \discone^{5} + D \discone^{6}}
  {D M \discone^{5}} \ll 1 \eqfinv
\end{equation*}
or, equivalently, if
\begin{equation}
  \label{eq:relevant-price}
  \mathfrak{R}^{\mathrm{P}} =
  \frac{I}{D} + \frac{10}{M} \ll 1 \eqfinp
\end{equation}
Compared with the relevance condition~\eqref{eq:relevant-resource}
obtained for the resource decomposition algorithm, we conclude
that the price decomposition algorithm is more demanding than
the resource decomposition algorithm.

\section{Proving monotonicity of a battery management problem}
\label{tts:sec:studybatproblem}

As stated in Proposition~\ref{prop:dailymonotone}, we have to satisfy
Assumption~\ref{tts:hyp:nonincreasing} in order to obtain an upper
bound for Problem~\eqref{eq:2tsmotivlong} when using the resource
decomposition algorithm. As explained in \S\ref{ssect:simpl-ressource},
since the capacity of the battery is handled at the slow time scale,
we only relax the equality constraints corresponding to the dynamics
of the state of charge and the health of the battery, so that
the monotonicity property has to be proven only for these two components
of the state.

This is why we focus on the simplified problem~\eqref{tts:eq:gensocprob},
where the state of the battery is its state of charge~$\va{s}_{t}$
and its health~$\va{h}_{t}$, where the decision variable~$\va{u}_t$
is the charge/discharge of the battery at time~$t$ and where ~$\va{w}_t$
is the random net demand. The objective is to minimize the consumption
of power of the external grid\footnote{We denote by~$x^+ = \max(0,x)$
  (resp. $x^- = -\min(0,x)$) the positive (resp. negative) part of a
  real number~$x$.}, that is,
\begin{subequations}
  \label{tts:eq:gensocprob}
  \begin{align}
    \inf_{\na{\va{u}_{t}}_{t\in\timeset{0}{T-1}}}
    & \EE \bgc{ \sum_{t=0}^{T-1} c_{t} \bp{\va{u}_{t} - \va{w}_{t}}^{+}
      + K(\va{s}_{T},\va{h}_{T})} \eqfinv \\
    \text{s.t.} \quad
    & \va{s}_{0} = s_{0}  \eqsepv \va{h}_{0} = h_{0} \eqfinv \\
    & \va{s}_{t+1} = \va{s}_{t} + \rho^{\mathrm{c}} \va{u}_{t+1}^+
      - \rho^{\mathrm{d}} \va{u}^{-}_{t+1}
      \eqfinv \\
    & \va{h}_{t+1} = \va{h}_{t} - \va{u}^{+}_{t+1} - \va{u}^-_{t+1} \eqfinv\\
    & \underline{S} \leq \va{s}_t \leq \overline{S} \eqsepv
      \va{h}_t \geq 0 \eqsepv
      \underline{U} \leq \va{u}_t \leq \overline{U} \eqfinv \\
    & \sigma(\va U_t) \subset \sigma(\va{w}_{0},\dots,\va W_{t}) \eqfinp
  \end{align}
\end{subequations}
We also assume that the noises~$(\va{w}_0,\dots,\va{w}_T)$ are stagewise
independent. Then Problem~\eqref{tts:eq:gensocprob} can be solved by \DP:
\begin{subequations}
  \label{tts:eq:gensocbell}
  \begin{align}
    & V\opt_{T} = K \eqfinv \\
    & V\opt_t(s,h) = \EE \bc{ V_{t}(s,h,\va{w}_{t})} \eqfinv
  \end{align}
  where
  \begin{align}
    V_{t}(s,h,w) = \inf_{u} \;
    & \Bp{c_{t} (u-w)^{+} + V\opt_{t+1}\bp{s + \rho^{\mathrm{c}} u^{+}
      - \rho^{\mathrm{c}} u^{-}, h - u^{+} - u^{-}}}
      \eqfinv \label{tts:eq:gensocopt} \\
    \text{s.t.} \quad
    & \underline{S} - s \leq \rho^{\mathrm{c}} u^+
      - \rho^{\mathrm{d}} u^- \leq \overline{S} -s \eqsepv
      u^{+} + u^{-} \leq h  \eqsepv
      \underline{U} \leq u \leq \overline{U} \eqfinp \label{tts:eq:gensocoptc}
  \end{align}
\end{subequations}
We assume that the final cost function~$K$ is nonincreasing.

\begin{lemma}
  The Bellman value functions~$\na{V\opt_{t}}_{t=0,\dots,T}$
  are nonincreasing in their arguments.
\end{lemma}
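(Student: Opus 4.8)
The proof is by backward induction on $t$, showing that each $V^\star_t$ is nonincreasing in $(s,h)$. The base case is $V^\star_T = K$, which is nonincreasing by the standing assumption on the final cost. For the induction step, I assume $V^\star_{t+1}$ is nonincreasing and I want to deduce that $V^\star_t$ is nonincreasing. Since $V^\star_t(s,h) = \EE[V_t(s,h,\va{w}_t)]$ and expectation preserves monotonicity, it suffices to prove that for each fixed realization $w$ the function $(s,h)\mapsto V_t(s,h,w)$ defined by the inner minimization~\eqref{tts:eq:gensocopt}–\eqref{tts:eq:gensocoptc} is nonincreasing.

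The heart of the argument is a feasibility-and-cost comparison at the level of the one-step problem. Fix $w$ and take $(s,h)\le(s',h')$ componentwise. Let $u$ be (nearly) optimal for the problem at $(s,h)$. The plan is to show that this same control $u$ — or a suitable modification of it — is feasible for the problem at $(s',h')$ with no larger cost. For the health constraint $u^+ + u^- \le h$ this is immediate: if it holds for $h$ it holds for $h' \ge h$. The state-of-charge constraint $\underline S - s \le \rho^{\mathrm c} u^+ - \rho^{\mathrm d} u^- \le \overline S - s$ is the delicate one, because shifting $s$ upward relaxes the lower bound but tightens the upper bound. Here I would argue that if the upper bound is violated at $s'$, one can decrease $u$ (reduce charging / increase discharging), which only helps: decreasing $u$ toward $0$ weakly decreases $(u-w)^+$, hence weakly decreases the instantaneous cost, and it decreases $s' + \rho^{\mathrm c}u^+ - \rho^{\mathrm c}u^-$ while the health term $h' - u^+ - u^-$ changes in a way that keeps the successor state dominated appropriately; then I invoke the induction hypothesis that $V^\star_{t+1}$ is nonincreasing on the (dominating) successor state. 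The careful bookkeeping is to check that the modified control keeps the successor state-of-charge and successor health both at least as large as the corresponding quantities obtained from the $(s,h)$ problem, so that the nonincreasing $V^\star_{t+1}$ evaluated there gives a smaller value.

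The main obstacle I anticipate is precisely this monotone-coupling argument for the state of charge: the dynamics are not monotone in $u$ in a single direction, and the upper box constraint $\va s_t \le \overline S$ does not obviously cooperate with raising the initial charge. The resolution should exploit that it is always admissible (and never costly) to ``throw away'' stored energy by discharging less efficiently is \emph{not} available — rather, one truncates the control so that the state-of-charge trajectory from $(s',h')$ stays below $\overline S$ while remaining above the trajectory from $(s,h)$; since the cost is monotone nondecreasing in $u$ (through $(u-w)^+$) and the successor value function is nonincreasing in both state components by induction, the truncated control yields a cost at $(s',h')$ no larger than the near-optimal cost at $(s,h)$. Taking the infimum over admissible $u$ at $(s',h')$ then gives $V_t(s',h',w) \le V_t(s,h,w)$, and taking expectation over $\va w_t$ completes the induction.
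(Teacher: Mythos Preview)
Your plan is correct and follows essentially the same route as the paper: backward induction, monotonicity in $h$ from enlargement of the feasible set plus the induction hypothesis, and monotonicity in $s$ by taking an $\epsilon$-optimal control at $(s,h)$ and truncating it so that the state-of-charge upper bound at $s'$ is respected while both successor state components remain dominated. The paper carries out exactly this, making the truncation explicit via the case split $u^\star(s)\le 0$ (where $u^\star(s)$ is already admissible at $s'$) versus $u^\star(s)>0$ (where one sets $u^\flat=\min\{(\rho^{\mathrm c})^{-1}(\overline S - s'),\,u^\star(s)\}\ge 0$, so that $|u^\flat|\le|u^\star(s)|$ and hence the successor health is also dominated); your ``truncate the control'' paragraph is precisely this construction, and the only refinement you should add when writing it up is that the modification is only ever needed when $u^\star(s)>0$, which guarantees the health bookkeeping goes the right way.
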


\begin{proof}
  The proof is done by induction.
  The last Bellman value function~$V\opt_{T}$ is nonincreasing by assumption.

  Assume that~$V\opt_{t+1}$ is nonincreasing.
  \begin{itemize}
  \item The Bellman value function $V_{t}$ is nonincreasing in~$h$,
    as decreasing~$h$ reduces the admissible set of Problem~\eqref{tts:eq:gensocopt} and increases the objective
    as~$V\opt_{t+1}$ is nonincreasing.
  \item To show that $V_{t}$ is nonincreasing in~$s$, consider
    two states of charge~$s$ and~$s'$ such that~$s' \geq s$,
    and let~$u\opt(s)$ be an $\epsilon$-optimal solution of
    Problem~\eqref{tts:eq:gensocopt}-\eqref{tts:eq:gensocoptc}:
    \begin{equation*}
      c_t  (u\opt(s)-w)^+ +
      V\opt_{t+1}\bp{s + \rho^{\mathrm{c}} \bp{u\opt(s)}^{+}
        - \rho^{\mathrm{d}} \bp{u\opt(s)}^{-},
        h - \bp{u\opt(s)}^{+} - \bp{u\opt(s)}^{-}}
      \leq V_t(s,h,w)+ \epsilon \eqfinp
    \end{equation*}
    We distinguish two cases.
    \begin{itemize}
    \item $u\opt(s) \leq 0$: then $u\opt(s)$ is admissible for~$V_t(s',h,w)$ because
      \begin{equation*}
        \underline{S} - s'
        \; \leq \; \underline{S} - s
        \; \leq \; \rho^{\mathrm{c}} \bp{u\opt(s)}^{+} - \rho^{\mathrm{d}} \bp{u\opt(s)}^{-}
        \; =    \; \rho^{\mathrm{d}} u\opt(s)
        \; \leq \; 0 \leq \overline{S} -s' \eqfinp
      \end{equation*}
      Moreover, as~$V_{t+1}$ is nonincreasing, we have
      \begin{multline*}
        V\opt_{t+1}\bp{s' + \rho^{\mathrm{c}} \bp{u\opt(s)}^{+}
          - \rho^{\mathrm{d}} \bp{u\opt(s)}^{-},
          h - \bp{u\opt(s)}^{+} - \bp{u\opt(s)}^{-}} \\
        \leq V\opt_{t+1}\bp{s + \rho^{\mathrm{c}} \bp{u\opt(s)}^{+}
          - \rho^{\mathrm{d}} \bp{u\opt(s)}^{-},
          h - \bp{u\opt(s)}^{+} - \bp{u\opt(s)}^{-}} \eqfinv
      \end{multline*}
      so that
      \begin{equation*}
        V_t(s',h,w) \leq V_t(s,h,w)+ \epsilon \eqfinp
      \end{equation*}
    \item $u\opt(s) > 0$:
      let~$u^\flat\np{s'} = \min\ba{{\rho^{\mathrm{c}}}^{-1} (\overline{S} -s'),u\opt(s)}$.
      Then $u^\flat\np{s'}$ is admissible for~$V_t(s',h,w)$ as
      \begin{align*}
        & \underline{U} \leq 0 < u^\flat\np{s'} \leq u\opt(s) \leq \overline{U} \eqfinv \\
        & \underline{S} - s' \leq 0 < \rho^{\mathrm{c}} u^\flat\np{s'}
          \leq \rho^{\mathrm{c}} {\rho^{\mathrm{c}}}^{-1} (\overline{S} -s')
          = \overline{S}-s' \eqfinv \\
        & u^\flat\np{s'} \leq u\opt(s) \leq h \eqfinp
      \end{align*}
      Moreover we have either
      $s' + \rho^{\mathrm{c}} u^\flat\np{s'} = s + \rho^{\mathrm{c}} u\opt(s)$ or
      $s' + \rho^{\mathrm{c}} u^\flat\np{s'}  = \overline{S} \geq s + \rho^{\mathrm{c}} u\opt(s)$,
      so that it always holds true that
      \begin{equation*}
        s' + \rho^{\mathrm{c}} u^\flat\np{s'} \geq s + \rho^{\mathrm{c}} u\opt(s) \eqfinp
      \end{equation*}
      This last inequality and the fact that~$V_{t+1}$ in nonincreasing
      lead to
      \begin{equation*}
        c_t \bp{u^\flat\np{s'}-w}^{+} +
        V\opt_{t+1}\bp{s + \rho^{\mathrm{c}} u^\flat\np{s'}, h - u^\flat\np{s'}}
        \leq c_t \bp{u\opt(s)-w}^{+} +
        V\opt_{t+1}\bp{s + \rho^{\mathrm{c}} u\opt(s) , h - u\opt(s)} \eqfinp
      \end{equation*}
      Finally, we get that
      \begin{equation*}
        V_t(s',h,w) \leq V_t(s,h,w)+ \epsilon \eqfinv
      \end{equation*}
      so that~$V_t$ is nonincreasing is~$s$.
    \end{itemize}
  \end{itemize}
  We conclude that~$V_t$ is nonincreasing in~$(s,h)$.
\end{proof}

\end{document}